\newcommand{\Ebb}{\mathbb{E}}
\newcommand{\Fbb}{\mathbb{F}}
\newcommand{\Nbb}{\mathbb{N}}
\newcommand{\Rbb}{\mathbb{R}}
\newcommand{\CA}{\mathcal{A}}
\newcommand{\CD}{\mathcal{D}}
\newcommand{\CF}{\mathcal{F}}
\newcommand{\CI}{\mathcal{I}}
\newcommand{\CK}{\mathcal{K}}
\newcommand{\CP}{\mathcal{P}}
\newcommand{\CQ}{\mathcal{Q}}
\newtheorem{prp}{Proposition}[section]
\newtheorem{thm}[prp]{Theorem}
\newtheorem{lem}[prp]{Lemma}
\newtheorem{ass}[prp]{Assumption}
\newtheorem{cor}[prp]{Corollary}
\theoremstyle{definition}
\newtheorem{rmk}[prp]{Remark}
\numberwithin{equation}{section}
\let\d\relax
\DeclareMathOperator{\d}{d\!}
\DeclareMathOperator{\ind}{\mathbf{1}}
\renewcommand{\l}{\left}
\renewcommand{\r}{\right}
\newcommand\numberthis{\addtocounter{equation}{1}\tag{\theequation}}
\renewcommand{\theta}{\vartheta}
\renewcommand{\phi}{\varphi}
\renewcommand{\epsilon}{\varepsilon}
\DeclareMathOperator{\E}{\hat{\Ebb}}
\newcommand{\M}{\textnormal{M}_{\ast}}
\newcommand{\MM}{\overline{\textnormal{M}}_{\ast}}
\renewcommand{\H}{\textnormal{H}_{\ast}}
\newcommand{\F}[1]{\textnormal{B}_b(\Omega_{#1})}
\renewcommand{\L}{\textnormal{L}_{\ast}}
\renewcommand{\S}{\textnormal{M}_b}
\title{Mean-Field SDEs driven by $G$-Brownian Motion}
\author{Karl-Wilhelm Georg Bollweg, Thilo Meyer-Brandis}
\begin{document}

    \maketitle
    \section{Introduction}\label{sec:introduction}
    A mean-field SDE driven by Brownian motion is an equation of the form
    \begin{equation*}
        \d X_t = b(X_t, P_{X_t}) \d t + \sigma(X_t,P_{X_t}) \d B_t,
    \end{equation*}
    where $B$ denotes Brownian motion and $P_{X_t}=P\circ (X_t)^{-1}$ denotes the push-forward measure of $X_t$ for $t\geq 0$.
    Mean-field theory can be traced back to the pioneering work of McKean \cite{mckean_class_1966} and Vlasov \cite{vlasov_vibrational_1968}, who studied the mean-field limit for interacting particle systems, revealing the connection between microscopic and macroscopic descriptions.
    The underlying concepts, deeply rooted in statistical physics, have found application in a wide range of disciplines, from physics to economics and biology, providing a fundamental understanding of systems at the macroscopic level.
    Mean-field models are used for animal and cell population behaviour, infectious diseases, systemic risk, and fluid mechanics, to name but a few.
    In 2006, Lasry and Lions brought a game-theoretic perspective to mean-field interactions and introduced so called mean-field games, cf. \cite{lasry_jeux_2006}, \cite{lasry_jeux_2006-1}, \cite{lasry_mean_2007}.
    The theory of mean-field games has paved the way for addressing dynamic interactions among weakly dependent rational agents in strategic decision-making scenarios.
    Thus, it sheds light on strategic behavior in large populations and offers a powerful approach for analysing systems with decentralised decision-making.
    Further contributions, e.g. \cite{caines_large_2006}, \cite{carmona_mean_2016}, have extended the reach of mean-field games.
    In the last decades, mean-field and mean-field game theory have been very active areas of research as reflected by the great volume of literature on these topics.  

    Another area of research that has attracted great interest in recent years is Knightian or model uncertainty.
    Model uncertainty describes situations where the underlying probability measure is not known.
    Thus, uncertainty affects any results or conclusions obtained using that model, and the need for robust modelling approaches arises.
    However, classical probability theory is not designed to include uncertainty.
    In the last decades, various approaches to quantify uncertainty have been developed. Most notable is the literature on robust finance, e.g. \cite{nutz_robust_2015}, \cite{pham_portfolio_2022}, \cite{neufeld_robust_2016}, \cite{lutkebohmert_robust_2021}, \cite{liang_robust_2020} and imprecise probabilities, e.g. \cite{quaeghebeur_introduction_2022}, \cite{augustin_impact_2010}, \cite{augustin_statistics_2022}. 
    The aim of this paper is to unify mean-field theory and uncertainty quantification, and to contribute to the development of a mean-field theory under model uncertainty.

    One main approach to study stochastic processes under model uncertainty is to consider the so-called $G$-Brownian motion as underlying driving process.
    Peng introduced $G$-Brownian motion in his seminal works on sublinear expectation spaces in the early 2000s, cf. \cite{peng_multi-dimensional_2008}, \cite{peng_new_2008}, \cite{peng_nonlinear_2019}.
    There, the classical probability space $(\Omega,\CF,P)$ is replaced by a sublinear expectation space $(\Omega,\CF,\E)$.
    In this setting, $G$-Brownian motion can be regarded as Brownian motion with volatility uncertainty.

    A second and more recent approach to stochastic processes under uncertainty is to consider a possibly uncountable set of probability measures $\mathcal{P}$ such that the canonical process $X$ has the desired properties. This is achieved by specifying a set $\Theta(t,\omega)$ of possible local semimartingale characteristics, i.e., each $P\in \CP$ is such that
    \begin{align*}
        \l(b^P_t(\omega), a^P_t(\omega), k^P_t(\omega)\r) & \in \Theta(t,\omega) & \d t\otimes \d P\text{-a.e.,} \numberthis\label{Theta}
    \end{align*}
    where $(b^P,a^P,k^P)$ denote the local semimartingale characteristics of the canonical process under $P$. In particular, the case where $\mathcal P$ is non-dominated and there is no preference among the measures in $\mathcal P$ is of great interest but it also poses major technical challenges. Within this framework, $G$-Brownian motion can be associated with a certain non-empty, convex set of probability measures $\CP$ representing uncertainty about the volatility, but the approach also allows for more general uncertainty modeling such as Lévy processes (\cite{neufeld_nonlinear_2016}) and affine processes (\cite{fadina_affine_2019}, \cite{biagini_non-linear_2023}) under uncertainty.
    
    For mean-field type processes, the local semimartingale characteristics not only depend on the current state of the process but also on its distribution. Thus, the set of possible characteristics $\Theta$ in \eqref{Theta} would depend not only on $(t,\omega)$ but also on $P$. This makes the uncertainty set approach significantly more involved for mean-filed processes than for Lévy or affine models. In particular, in an ongoing research project we examine the question whether such a set $\Theta$ could satisfy Assumption 2.1 in \cite{nutz_constructing_2013} that implies the crucial dynamic programming principle.
        
    In this paper, we focus on the introduction of mean-field SDEs under volatility uncertainty. Considering $G$-Brownian motion as underlying driving process allows us to draw from the comparatively rich mathematical structure developed in the $G$-framework. A first step towards defining mean-field SDEs in the $G$-setting was made in \cite{sun_mean-field_2020}. 
    There, the author considered dynamics of the form
    \begin{align*}
        \d X_s & = \E\!\l[ b\!\l(s,x,X_s \r)\r]\Big|_{x=X_s} \d s + \E\!\l[ h\!\l(s,x,X_s \r)\r]\Big|_{x=X_s} \d \l<B\r>_s + \E\!\l[ g\!\l(s,x,X_s \r)\r]\Big|_{x=X_s} \d B_s,
        \numberthis \label{eq:intro-1}
    \end{align*}
    where $B$ is $G$-Brownian motion and $\E$ the corresponding $G$-expectation.
    They proved the existence of a unique solution of \eqref{eq:intro-1} under a Lipschitz condition on the coefficients.
    In their recent work \cite{sun_distribution_2023}, the authors generalised these results to Lipschitz coefficients that depend on the sublinear distribution of the solution.
    That is, they studied dynamics of the form
    \begin{equation*}
        \d X_s = b\!\l(s,x,F_{X_s} \r)\Big|_{x=X_s} \d s + h\!\l(s,x,F_{X_s} \r)\Big|_{x=X_s} \d \l<B\r>_s + g\!\l(s,x,F_{X_s} \r)\Big|_{x=X_s} \d B_s, \numberthis\label{eq:intro-11}
    \end{equation*}
    where for a random variable $\xi$ the functional $F_\xi$ is given by $F_\xi(\phi):=\E\!\l[\phi(\xi)\r]$, where $\phi:\,\Rbb^d\rightarrow\Rbb$ lies in a suitable function space. 
    This corresponds to the sublinear distribution of $\xi$, cf. \cite{peng_nonlinear_2019}.
    For this purpose, the authors constructed a space containing all sublinear distribution functions endowed with a metric which can be regarded as a generalisation of the Kantorovich-Rubinstein metric. 
    
    In this paper, we introduce a different, more general formulation of mean-field $G$-SDEs that has several advantages as discussed below. 
    Instead of letting the coefficients depend on the sublinear distribution $F_{X_s}$, we allow the coefficients to depend directly on the random variable $X_s$.
    To be specific, we work in the generalised $G$-setting introduced in Chapter~8 in \cite{peng_nonlinear_2019} and consider $G$-SDEs of the form
    \begin{align*}
        \d X_s(\omega) &=b\!\l(s,x, X_s,\omega\r)\Big|_{x=X_s(\omega)} \d s + h\!\l(s,x, X_s,\omega\r)\Big|_{x=X_s(\omega)} \d \l<B\r>_s(\omega) 
        \\&\qquad + g\!\l(s,x, X_s,\omega \r)\Big|_{x=X_s(\omega)} \d B_s(\omega),
        \numberthis\label{eq:intro-2}
    \end{align*} 
    where the coefficients are defined on $[0,T]\times\Rbb^d\times\L^{2,d}\times\Omega$.
    Here, $\L^{2,d}$ denotes the space of $d$-dimensional random vectors with finite second moment, cf. Section~\ref{sec:G-setting}.
    For simplicity and conciseness, we will use the following notation. 
    For a function $f$ on $[0,T]\times\Rbb^d\times\L^{2,d}\times\Omega$ and random vector $\eta\in\L^{1,d}$, define
    \begin{equation*}
        f\!\l(s,\eta,\xi,\omega\r):=f\!\l(s,\eta(\omega),\xi,\omega\r)=f\!\l(s,x,\xi,\omega\r)\Big|_{x=\eta(\omega)}
    \end{equation*}
    for any $0\leq s\leq T$, $\xi\in\L^{2,d}$ and $\omega\in\Omega$. 
    Often we suppress the explicit dependence on $\omega$ and write $f\!\l(s,\eta,\xi \r)$ instead of $f\!\l(s,\eta,\xi,\omega\r)$. 
    Hence, the $G$-SDE \eqref{eq:intro-2} can be written as
    \begin{align*}
        \d X_s &=b\!\l(s,X_s, X_s\r) \d s + h\!\l(s,X_s, X_s\r) \d \l<B\r>_s + g\!\l(s,X_s, X_s\r) \d B_s
        .\numberthis\label{eq:intro-22}
    \end{align*}

    Our approach to let the coefficients depend on the random variable $X_s$ instead of its distribution is inspired by \cite{lions_large_2007}, \cite{buckdahn_mean-field_2017}, \cite{carmona_mean_2016}. 
    There, the authors lifted functions on probability measures to functions on random variables which allowed the definition of the Lions derivative via the Fréchet derivative of the lifted function. 
    Analogously, our formulation enables the study of the Fréchet differentiability of solutions of \eqref{eq:intro-22}.
    This will be used in the companion paper \cite{bollweg_mean-field_nodate} to associate \eqref{eq:intro-22} with a nonlocal nonlinear PDE.
    This Feynman-Kac type result allows the computation of functionals of the solution of \eqref{eq:intro-22} by solving the associated PDE.
    Our formulation also enables the introduction of a corresponding finite interacting particle system such that mean-field $G$-SDE \eqref{eq:intro-22} can be regarded as the asymptotic limit of this system. This result corresponds to propagation of chaos in classical mean-field theory, see \cite{criens_set-valued_2023} for a set valued approach.

    The main contribution of this paper is to study existence and uniqueness of solutions of the mean-field $G$-SDE \eqref{eq:intro-22}.
    We remark that coefficients of the form in \eqref{eq:intro-1} and \eqref{eq:intro-11} are special cases of the coefficients of the form \eqref{eq:intro-22}.
    Further, in contrast to \cite{sun_mean-field_2020} and \cite{sun_distribution_2023}, we allow the coefficients to be non-deterministic and non-Lipschitz, and consider square integrable instead of deterministic initial conditions.
    More precisely, we mainly require the coefficients to satisfy an Osgood type continuity and sublinear growth condition, cf. Assumption~\ref{ass:1-non-lipschitz}.  
    Under these assumptions, we derive the existence and uniqueness result for \eqref{eq:intro-22} as formalised in Theorem~\ref{thm:existence-uniqueness-ass-1}.
    To be specific, for a given square integrable initial condition $X_t=\xi\in\L^{2,d}(t)$, $0\leq t\leq T$, we construct the solution of \eqref{eq:intro-2} as the limit of a Picard sequence.
    Furthermore, we show that this solution is unique and satisfies
    \begin{equation*}
        \E\!\l[ \sup_{t\leq s\leq T} \l\|X_s\r\|^2\r]<\infty.
    \end{equation*}
    This is in line with results for classical mean-field SDEs.
    We are not aware of such results for $G$-SDEs under weaker assumptions on the coefficients.
    In particular, since coefficients such as in \cite{sun_distribution_2023}, \cite{sun_mean-field_2020} are a special case of the coefficients considered in this paper, the existence and uniqueness results in \cite{sun_distribution_2023}, \cite{sun_mean-field_2020} follow immediately from Theorem~\ref{thm:existence-uniqueness-ass-1}.
    The comparison of our results with the existing literature will be illustrated in more detail in Section~\ref{sec:discussion}.

    Like most of the literature on $G$-SDEs, \cite{sun_mean-field_2020}, \cite{sun_distribution_2023} work in the $G$-setting as constructed in \cite{peng_multi-dimensional_2008} which is restricted to quasi-continuous random variables, cf. \cite{denis_function_2011}.
    We work in the generalised $G$-setting from Chapter 8 in \cite{peng_nonlinear_2019} and extend many known results from the quasi-continuous to this more general setting.
    For the sake of completeness, we include the proofs of these results in Appendix~\ref{app:proofs_gsetting}.

    Our paper is structured as follows.
    In Section~\ref{sec:G-setting}, we recall the sublinear expectation setting from \cite{peng_nonlinear_2019}. 
    In Section~\ref{sec:existence-uniqueness}, we introduce our mean-field $G$-SDE and prove existence and uniqueness of its solution.
    Finally, we compare our results to the existing literature in Section~\ref{sec:discussion}.

    \section{The $G$-Setting}\label{sec:G-setting}
    In this section, we recall the sublinear expectation setting from Chapter~8 in \cite{peng_nonlinear_2019}.
    This setting builds on the results in quasi-sure analysis related to the $G$-setting, cf. \cite{denis_function_2011}.
    If not denoted otherwise, the proofs can be found in Appendix~\ref{app:proofs_gsetting}.
    
    For $n\geq 1$, let $\Omega:=\textnormal{C}_0(\Rbb_+,\Rbb^n)$ be the space of all continuous $\Rbb^n$-valued paths starting at the origin equipped with the topology of uniform convergence 
    .
    Let $\CF$ denote the corresponding Borel $\sigma$-algebra.
    Moreover, let $\Fbb=(\CF_t)_{t\geq 0}$ denote the natural filtration generated by the coordinate mapping process $B$.    
    For $t\geq 0$, let $\F{t}$ denote the space of all bounded $\CF_t$-measurable functions $\xi:\,\Omega\rightarrow \Rbb$.

    Fix a convex and closed subset $\Sigma\subseteq \mathbb{S}_+^n$ of symmetric non-negative definite $n\times n$-matrices and set
    \begin{equation*}
        \mathcal{A}^\Sigma:=\Big\{ \theta=(\theta_t)_{t\geq 0}\,:\,\theta \text{ is $\Sigma$-valued and $\mathbb{F}$-progressively measurable}\Big\}.
    \end{equation*}
    Let $P_0$ denote the Wiener measure on $(\Omega,\CF)$, and define
    \begin{equation*}
        \CP:=\l\{ P_0 \circ \l( \theta \bullet B\r)^{-1}\,:\, \theta \in\CA^{\Sigma}\r\},
    \end{equation*}
    where $\theta\bullet B:=\int_0^\cdot \theta_s \d B_s$ denotes the It\^o integral with respect to the stochastic basis $(\Omega,\CF,\Fbb,P_0)$.
    The set of probability measures $\CP$ induces an upper expectation on $\F{}$, namely
    \begin{equation*}
        \E:\,\F{} \rightarrow \Rbb,\qquad \xi\mapsto\E\!\l[ \xi \r]:=\sup_{P\in\CP} E_P\l[\xi\r],
    \end{equation*}
    where $E_P$ denotes the linear expectation with respect to $P$.
    The process $B$ is a $G$-Brownian motion with respect to $\E$ and $(\Omega,\F{},\E)$ is a sublinear expectation space.
    
    For $p\geq 1$, define the norm
    \begin{equation*}
        \|\cdot\|_{\L^{p}}:\,\F{}\rightarrow\Rbb_+,\qquad \xi\mapsto\|\xi\|_{\L^{p}}:=\E\!\l[\l| \xi\r|^p\r]^{\frac{1}{p}}.
    \end{equation*}
    For any $t\geq 0$, let $\L^{p}$ and $\L^{p}(t)$ denote the completion of $\F{}$ and $\F{t}$ with respect to $\|\cdot\|_{\L^{p}}$ respectively.

    Let $\S(0,T)$ denote the space of all processes $X:\,[0,T]\times\Omega\rightarrow \Rbb$ of the form
    \begin{equation*}
        X_s(\omega)=\sum_{k=0}^{m-1} \xi_k(\omega)\ind_{\l[t_k,t_{k+1}\r)}(s),\qquad (s,\omega)\in [0,T]\times\Omega,
    \end{equation*}
    with $m\in\mathbb{N}$, $0=t_0<\ldots<t_m=T$, and $\xi_k\in\F{t_k}$ for all $0\leq k\leq m-1$.
    For $p\geq 1$, define the norm
    \begin{align*}
        \|\cdot\|_{\M^{p}}:\,\S(0,T)\rightarrow\Rbb_+,\qquad \|X\|_{\M^{p}}:= \l(\int_0^T \E\!\l[  \l| X_s\r|^p  \r] \d s\r)^{\frac{1}{p}}
    \end{align*}
    and let $\M^{p}(0,T)$ denote the completion of $\S(0,T)$ with respect to $\|\cdot\|_{\M^{p}}$.
    
    \begin{rmk}
        Sometimes, it makes sense to consider the norm
        \begin{equation*}
            \|\cdot\|_{\MM^{p}}:\,\S(0,T)\rightarrow\Rbb_+,\qquad \|X\|_{\MM^{p}}:= \E\!\l[ \int_0^T \l| X_s\r|^p  \d s \r]^{\frac{1}{p}},
        \end{equation*}
        which is weaker than $\l\|\cdot\r\|_{\M^p}$, and thus $\M^p(0,T)\subseteq\MM^p(0,T)$, where $\MM^p(0,T)$ denotes the completion of $\S(0,T)$ with respect to $\|\cdot\|_{\MM^p}$.
        For the study of $G$-SDEs, the spaces $\M^p(0,T)$ are more convenient and thus we focus on these in this paper.
    \end{rmk}

    \begin{lem}\label{lem:Lp-sum-Mp}
        Let $p\geq 1$, $N\in\mathbb{N}$, $0=t_0<\ldots<t_N=T$, and $\xi_k\in\L^p(t_k)$, $0\leq k\leq N-1$.
        Then
        \begin{equation*}
            X:=\sum_{k=0}^{N-1} \xi_k\ind_{\l[t_k,t_{k+1}\r)}\in\M^p(0,T).
        \end{equation*}
    \end{lem}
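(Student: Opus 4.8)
The plan is to realise $X$ as an $\M^p$-limit of genuine step processes lying in $\S(0,T)$. By construction $\L^p(t_k)$ is the completion of $\F{t_k}$ with respect to $\|\cdot\|_{\L^p}$, so for each $0\le k\le N-1$ I may fix a sequence $(\xi_k^{(j)})_{j\in\mathbb{N}}\subseteq\F{t_k}$ of bounded $\CF_{t_k}$-measurable functions with $\l\|\xi_k^{(j)}-\xi_k\r\|_{\L^p}\to0$ as $j\to\infty$. For each $j$ set
\begin{equation*}
  X^{(j)}:=\sum_{k=0}^{N-1}\xi_k^{(j)}\ind_{\l[t_k,t_{k+1}\r)}.
\end{equation*}
Since the $\xi_k^{(j)}$ are bounded and $\CF_{t_k}$-measurable and $0=t_0<\dots<t_N=T$, each $X^{(j)}$ is by definition an element of $\S(0,T)$.

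Next I would estimate the $\M^p$-distance between two members of this sequence. Because the intervals $\l[t_k,t_{k+1}\r)$ are pairwise disjoint and cover $[0,T)$, for fixed $s$ exactly one summand of $X^{(i)}_s-X^{(j)}_s$ is nonzero, whence $\l|X^{(i)}_s-X^{(j)}_s\r|^p=\sum_{k}\l|\xi_k^{(i)}-\xi_k^{(j)}\r|^p\ind_{\l[t_k,t_{k+1}\r)}(s)$. Applying $\E$ at each fixed $s$ (which therefore selects a single term and sidesteps any sublinearity issue) and integrating over $[0,T]$ gives
\begin{equation*}
  \l\|X^{(i)}-X^{(j)}\r\|_{\M^p}^p
  =\sum_{k=0}^{N-1}(t_{k+1}-t_k)\,\E\!\l[\,\l|\xi_k^{(i)}-\xi_k^{(j)}\r|^p\,\r]
  =\sum_{k=0}^{N-1}(t_{k+1}-t_k)\,\l\|\xi_k^{(i)}-\xi_k^{(j)}\r\|_{\L^p}^p,
\end{equation*}
where I used the identity $\E\!\l[|\eta|^p\r]=\|\eta\|_{\L^p}^p$ for $\eta\in\L^p$ together with $\xi_k^{(i)}-\xi_k^{(j)}\in\L^p(t_k)$. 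Each sequence $(\xi_k^{(j)})_j$ is $\L^p$-convergent, hence $\L^p$-Cauchy; as the sum is finite, $(X^{(j)})_j$ is Cauchy in $\S(0,T)$ with respect to $\|\cdot\|_{\M^p}$.

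By completeness of $\M^p(0,T)$ the sequence $(X^{(j)})_j$ converges to some limit therein, and the final step is to identify this limit with $X$. Running the same computation with $\xi_k^{(j)}-\xi_k$ in place of $\xi_k^{(i)}-\xi_k^{(j)}$ yields
\begin{equation*}
  \l\|X^{(j)}-X\r\|_{\M^p}^p=\sum_{k=0}^{N-1}(t_{k+1}-t_k)\,\l\|\xi_k^{(j)}-\xi_k\r\|_{\L^p}^p,
\end{equation*}
which tends to $0$ as $j\to\infty$ because each $\l\|\xi_k^{(j)}-\xi_k\r\|_{\L^p}\to0$ and the sum is finite. Thus $X^{(j)}\to X$ in $\M^p$-norm, and therefore $X\in\M^p(0,T)$.

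I expect the only genuinely delicate point to be this identification, namely making precise, in the generalised $G$-setting, that the difference $X^{(j)}-X$ is an admissible argument for $\|\cdot\|_{\M^p}$ and that $|\eta|^p\in\L^1$ with $\E\!\l[|\eta|^p\r]=\|\eta\|_{\L^p}^p$ for every $\eta\in\L^p$; both follow from the continuity of $\eta\mapsto|\eta|^p$ from $\L^p$ to $\L^1$ and the definition of the completed norms, but they should be invoked explicitly. Everything else reduces to the disjointness of the partition intervals and the finiteness of the sum over $k$.
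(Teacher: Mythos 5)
Your proposal is correct and follows essentially the same route as the paper: approximate each $\xi_k$ by bounded $\CF_{t_k}$-measurable functions, form the corresponding step processes in $\S(0,T)$, and use disjointness of the partition intervals to reduce the $\M^p$-distance to a finite sum of $\L^p$-distances, concluding by completeness of $\M^p(0,T)$. The intermediate Cauchy-sequence step is redundant (the paper passes directly to $\l\|X^{(j)}-X\r\|_{\M^p}\to 0$, which your final computation also establishes), but it does no harm.
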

    
    \begin{lem}\label{lem:B-M2}
        Let $a\in\Rbb^n$. Then $B^a:=a^T B \in\M^2(0,T)$.
    \end{lem}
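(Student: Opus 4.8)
The plan is to exhibit $B^a$ as a $\|\cdot\|_{\M^2}$-limit of piecewise constant processes whose nodal values lie in the spaces $\L^2(t_k)$; membership in $\M^2(0,T)$ then follows from Lemma~\ref{lem:Lp-sum-Mp} together with the completeness of $\M^2(0,T)$.

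First I would record the second moment estimate
\begin{equation*}
    \E\!\l[|a^T(B_s - B_r)|^2\r] \leq C\,(s-r), \qquad 0\leq r\leq s\leq T,
\end{equation*}
where $C:=|a|^2 \sup_{\gamma\in\Sigma}\|\gamma\|^2<\infty$ is finite because $\Sigma$ is bounded. This follows by writing an arbitrary $P\in\CP$ as $P=P_0\circ(\theta\bullet B)^{-1}$ with $\theta\in\CA^{\Sigma}$: since each $\theta_u$ is symmetric, under $P$ the increment $a^T(B_s-B_r)$ has the law of $\int_r^s (\theta_u a)^T \d B_u$ under $P_0$, so It\^o's isometry gives $E_P[|a^T(B_s-B_r)|^2]=E_{P_0}[\int_r^s |\theta_u a|^2\,\d u]\leq C(s-r)$, and taking the supremum over $P\in\CP$ yields the claim. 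In particular $\E[|a^T B_{t_k}|^2]<\infty$, and the bounded truncations $(-M)\vee (a^T B_{t_k})\wedge M\in\F{t_k}$ converge to $a^T B_{t_k}$ in $\|\cdot\|_{\L^2}$, so that $a^T B_{t_k}\in\L^2(t_k)$.

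Next, for $N\in\Nbb$ set $t_k^N:=kT/N$ and define the piecewise constant interpolation
\begin{equation*}
    X^N:=\sum_{k=0}^{N-1}\l(a^T B_{t_k^N}\r)\ind_{[t_k^N,t_{k+1}^N)}.
\end{equation*}
Since $a^T B_{t_k^N}\in\L^2(t_k^N)$, Lemma~\ref{lem:Lp-sum-Mp} yields $X^N\in\M^2(0,T)$, and the moment estimate controls the approximation error:
\begin{equation*}
    \|X^N-B^a\|_{\M^2}^2=\sum_{k=0}^{N-1}\int_{t_k^N}^{t_{k+1}^N}\E\!\l[|a^T(B_s-B_{t_k^N})|^2\r]\d s\leq \frac{C}{2}\sum_{k=0}^{N-1}(t_{k+1}^N-t_k^N)^2=\frac{C\,T^2}{2N}\xrightarrow[N\to\infty]{}0.
\end{equation*}
As each $X^N$ lies in $\M^2(0,T)$ and this sequence converges to $B^a$ in $\|\cdot\|_{\M^2}$, completeness of $\M^2(0,T)$ forces $B^a\in\M^2(0,T)$.

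The only genuinely delicate point is the membership $a^T B_{t_k}\in\L^2(t_k)$: one must check that the truncations converge in the sublinear norm, i.e.\ $\sup_{P\in\CP}E_P[|a^T B_{t_k}|^2\ind_{\{|a^T B_{t_k}|>M\}}]\to 0$ as $M\to\infty$. This uniform integrability across the non-dominated family $\CP$ is exactly where the boundedness of $\Sigma$ is needed — it provides a uniform sub-Gaussian tail bound for $a^T B_{t_k}$ — whereas the remaining estimates are routine.
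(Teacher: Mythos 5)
Your proof is correct and takes essentially the same route as the paper's: a piecewise-constant interpolation of $B^a$ on a uniform grid, membership of the interpolation in $\M^2(0,T)$ via Lemma~\ref{lem:Lp-sum-Mp}, the increment estimate $\E\l[|a^T(B_s-B_r)|^2\r]\leq C\,(s-r)$ to control the approximation error, and completeness of $\M^2(0,T)$. The only difference is that you spell out why $a^TB_{t_k}\in\L^2(t_k)$ (truncation plus uniform integrability over $\CP$), a point the paper's proof treats as clear.
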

    
    For $a\in\Rbb^n$, define the map $\CI_a:\,\S(0,T)\rightarrow \L^{2}(T)$ by
    \begin{equation*}
        \CI_a(X):=\int_0^T X_t \d B^a_t := \sum_{k=0}^{m-1} \xi_k \l(B^a_{t_{k+1}} -B^a_{t_k}\r)
    \end{equation*}
    for each
    \begin{equation*}
        X=\sum_{k=0}^{m-1} \xi_k\ind_{\l[t_k,t_{k+1}\r)}\in\S(0,T).
    \end{equation*}
    \begin{lem}\label{lem:I-properties}
        For $a\in\Rbb^n$, the map $\CI_a:\,\S(0,T)\rightarrow \L^{2}(T)$ is continuous and linear, thus, it can be continuously extended to $\CI_a:\,\M^{2}(0,T)\rightarrow \L^{2}(T)$.
        Moreover, we have for all $X\in\M^{2}(0,T)$ that
        \begin{equation*}
            \E\l[\CI_a(X)\r] = \E\l[ \int_0^T X_t \d B^a_t \r] =0,
        \end{equation*}
        \begin{equation*}
            \l\|\CI_a(X)\r\|_{\L^2}^2 = \E\l[ \l(\int_0^T X_t \d B^a_t\r)^2 \r] 
            \leq 
            \overline{\sigma}_{aa}^2 \E\l[\int_0^T \l| X_t \r|^2 \d t \r]
            \leq
            \overline{\sigma}_{aa}^2 \l\|X\r\|_{\M^2}^2
            ,
        \end{equation*}
        where
        \begin{equation*}
            \overline{\sigma}_{aa}:=\sup_{\sigma\in\Sigma} \sqrt{a^T \sigma \sigma a}.
        \end{equation*}
    \end{lem}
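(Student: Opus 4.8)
The plan is to verify both identities first on the dense subspace $\S(0,T)$ of simple processes and then extend everything by continuity to $\M^2(0,T)$. Linearity of $\CI_a$ on $\S(0,T)$ is immediate: given two simple processes, pass to a common refinement of their partitions and read off additivity and homogeneity directly from the defining sum. The substance of the lemma is therefore the two estimates, which I would obtain by exploiting the representation $\E\!\l[\cdot\r]=\sup_{P\in\CP}E_P\!\l[\cdot\r]$ and reducing to classical It\^o calculus under each individual $P$.

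The key observation is that for $P=P_0\circ(\theta\bullet B)^{-1}\in\CP$ the process $B^a=a^T B$ has, under $P$, the law of the scalar It\^o integral $\int_0^\cdot a^T\theta_s\,\d B_s$ computed under $P_0$. Since $\theta_s\in\Sigma$ and $\overline{\sigma}_{aa}<\infty$, the density $a^T\theta_s\theta_s a\le\overline{\sigma}_{aa}^2$ is bounded, so under $P$ the process $B^a$ is a continuous square-integrable martingale with $\d\l<B^a\r>^P_t=a^T\theta_t\theta_t a\,\d t\le\overline{\sigma}_{aa}^2\,\d t$ $P$-a.s. For a simple $X=\sum_k\xi_k\ind_{\l[t_k,t_{k+1}\r)}$ the sum $\CI_a(X)=\sum_k\xi_k\l(B^a_{t_{k+1}}-B^a_{t_k}\r)$ is then a martingale transform under each $P$, so $E_P\!\l[\CI_a(X)\r]=0$ for every $P\in\CP$, and taking the supremum yields $\E\!\l[\CI_a(X)\r]=0$.

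For the norm bound I would again fix $P$ and apply the classical It\^o isometry, followed by the density bound:
\begin{align*}
    E_P\!\l[\CI_a(X)^2\r] &=E_P\!\l[\int_0^T X_t^2\,\d\l<B^a\r>^P_t\r] \\
    &\le\overline{\sigma}_{aa}^2\,E_P\!\l[\int_0^T X_t^2\,\d t\r]\le\overline{\sigma}_{aa}^2\,\E\!\l[\int_0^T \l|X_t\r|^2\,\d t\r].
\end{align*}
Taking the supremum over $P\in\CP$ on the left gives $\l\|\CI_a(X)\r\|_{\L^2}^2\le\overline{\sigma}_{aa}^2\,\E\!\l[\int_0^T|X_t|^2\,\d t\r]$, and the Fubini-type inequality $\E\!\l[\int_0^T|X_t|^2\,\d t\r]\le\int_0^T\E\!\l[|X_t|^2\r]\,\d t=\l\|X\r\|_{\M^2}^2$ for the sublinear expectation—the same inequality underlying $\l\|\cdot\r\|_{\MM^2}\le\l\|\cdot\r\|_{\M^2}$ from the Remark—yields the second estimate. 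Thus $\CI_a$ is a bounded linear map $\S(0,T)\to\L^2(T)$ and extends uniquely to a continuous linear map on the completion $\M^2(0,T)$ with values in the Banach space $\L^2(T)$. Both identities then pass to the limit: if $X_n\to X$ in $\M^2(0,T)$ with $X_n\in\S(0,T)$, then $\CI_a(X_n)\to\CI_a(X)$ in $\L^2(T)$, and since $\l|\E[\eta]-\E[\zeta]\r|\le\E\!\l[|\eta-\zeta|\r]\le\l\|\eta-\zeta\r\|_{\L^2}$ the functional $\E$ is $\L^2$-continuous, giving $\E\!\l[\CI_a(X)\r]=\lim_n\E\!\l[\CI_a(X_n)\r]=0$; moreover $\M^2$-convergence forces $\MM^2$-convergence (as $\l\|\cdot\r\|_{\MM^2}\le\l\|\cdot\r\|_{\M^2}$), so both sides of the inequality are continuous in the limit.

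The main obstacle is precisely the zero-expectation and isometry step on simple processes. Expanding $\CI_a(X)^2$ directly under the sublinear $\E$ produces cross terms of the form $\xi_j\xi_k\l(B^a_{t_{j+1}}-B^a_{t_j}\r)\l(B^a_{t_{k+1}}-B^a_{t_k}\r)$ whose $G$-expectation does not vanish by the usual orthogonality argument, because sublinearity destroys the additivity that normally makes such terms cancel. The device that circumvents this is to avoid expanding under $\E$ altogether and instead carry out the computation measure-by-measure, where the classical $L^2$-martingale theory applies verbatim, taking the supremum over $\CP$ only at the very end. Making this rigorous requires confirming that under each $P$ the projection $B^a$ is a genuine (not merely local) square-integrable martingale with the stated quadratic-variation bound, which is where the representation of $\CP$ through $\theta\bullet B$ and the finiteness of $\overline{\sigma}_{aa}$ are essential.
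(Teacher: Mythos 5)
Your proof is correct and matches the approach underlying this result: the paper itself gives no proof here (it cites Lemma 8.1.6 of Peng's book), and your measure-by-measure reduction — representing $\E$ as $\sup_{P\in\CP}E_P$, applying classical martingale theory and the It\^o isometry under each fixed $P$ with the $\Sigma$-valued volatility bound, and taking the supremum only at the end — is exactly the technique the paper employs in Appendix~\ref{app:proofs_gsetting} to prove the companion estimates, Lemmas~\ref{lem:Q-p-bound} and~\ref{lem:I-p-bound}. The only minor imprecision is that under $P=P_0\circ(\theta\bullet B)^{-1}$ the process $\theta$ lives on the original space, so the quadratic-variation bound $\d\l<B^a\r>_t\leq\overline{\sigma}_{aa}^2\,\d t$ should be transferred through the pushforward of laws (or obtained, as the paper does, via martingale representation under $P$ with a $\Sigma$-valued $\sigma^P$); this does not affect the validity of the argument.
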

    \begin{proof}
        Cf. \cite[Lemma 8.1.6]{peng_nonlinear_2019}.
    \end{proof}
    For $0\leq t \leq s \leq  T$ and $X\in\M^{2}(0,T)$, set
    \begin{align*}
        \int_t^s X_u\d B^a_u &:=\int_0^T X_u\ind_{[t,s)}(u) \d B^a_u=\CI_a(X \ind_{[t,s)}).
    \end{align*}

    By Lemma~\ref{lem:B-M2}, we can define the quadratic variation of $B^a$ as process $\l<B^a\r>$ given by
    \begin{equation*}
        \l<B^a\r>_t:= \l(B^a_t\r)^2 -2 \int_0^t B^a_s \d B^a_s \in\L^1(t), \qquad 0\leq t\leq T.
    \end{equation*}

    Now, define the map $\CQ_a:\,\S(0,T)\rightarrow \L^{1}(T)$ by
    \begin{equation*}
        \CQ_a(X)=\int_0^T X_t \d \l< B^a\r>_t := \sum_{k=0}^{m-1} \xi_k\,\l(\l< B^a\r>_{t_{k+1}} - \l< B^a\r>_{t_k}\r)
    \end{equation*}
    for each
    \begin{equation*}
        X=\sum_{k=0}^{m-1} \xi_k\ind_{\l[t_k,t_{k+1}\r)}\in\S(0,T).
    \end{equation*}
    
    \begin{lem}\label{lem:Q-properties}
        For $a\in\Rbb^n$, the map $\CQ_a:\,\S(0,T)\rightarrow \L^{2}(T)$ is continuous and linear, thus, it can be continuously extended to $\CQ_a:\,\M^{1}(0,T)\rightarrow \L^{2}(T)$.
        Moreover, we have for all $X\in\M^{1}(0,T)$ that
        \begin{align*}
            \l\|\CQ_a(X)\r\|_{\L^1}=\E\l[ \l| \int_0^T X_t \d \l<B^a\r>_t \r| \r] &\leq \overline{\sigma}_{aa}^2 \E\l[ \int_0^T \l|X_t\r| \d t \r]\leq \overline{\sigma}_{aa}^2 \l\|X\r\|_{\M^1}.
        \end{align*}
    \end{lem}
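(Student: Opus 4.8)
The plan is to follow the template of Lemma~\ref{lem:I-properties}: linearity of $\CQ_a$ on $\S(0,T)$ is immediate from its definition, so it suffices to prove the displayed $\L^1$-estimate for simple processes and then extend $\CQ_a$ by continuity. Fix a simple process $X=\sum_{k=0}^{m-1}\xi_k\ind_{[t_k,t_{k+1})}\in\S(0,T)$, so that $\CQ_a(X)=\sum_{k=0}^{m-1}\xi_k\l(\l<B^a\r>_{t_{k+1}}-\l<B^a\r>_{t_k}\r)$. The crux is a pathwise bound on the increments of $\l<B^a\r>$, which I would establish first; the estimate then follows from the monotonicity and sub-additivity of $\E$, and the extension from the density of $\S(0,T)$ in $\M^1(0,T)$.

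First I would show that, quasi-surely,
\begin{equation*}
    0\leq \l<B^a\r>_{t_{k+1}}-\l<B^a\r>_{t_k}\leq \overline{\sigma}_{aa}^2\,(t_{k+1}-t_k),\qquad 0\leq k\leq m-1.
\end{equation*}
By construction every $P\in\CP$ is of the form $P=P_0\circ(\theta\bullet B)^{-1}$ for some $\theta\in\CA^\Sigma$, so under $P$ the canonical process has the same law as $\int_0^\cdot\theta_s\,\d B_s$, whence $\l<B^a\r>_t=\int_0^t a^T\theta_s\theta_s a\,\d s=\int_0^t|\theta_s a|^2\,\d s$ holds $P$-a.s. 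Since $\theta_s\in\Sigma$ and $\Sigma$ consists of symmetric matrices, $0\leq|\theta_s a|^2\leq\sup_{\sigma\in\Sigma}|\sigma a|^2=\overline{\sigma}_{aa}^2$, and integrating over $[t_k,t_{k+1}]$ yields the claimed bound $P$-a.s.; as $P\in\CP$ was arbitrary, it holds quasi-surely.

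With this bound in hand, the triangle inequality and $|\xi_k|\geq 0$ give, quasi-surely,
\begin{equation*}
    \l|\CQ_a(X)\r|\leq\sum_{k=0}^{m-1}|\xi_k|\l(\l<B^a\r>_{t_{k+1}}-\l<B^a\r>_{t_k}\r)\leq\overline{\sigma}_{aa}^2\sum_{k=0}^{m-1}|\xi_k|\,(t_{k+1}-t_k)=\overline{\sigma}_{aa}^2\int_0^T|X_t|\,\d t.
\end{equation*}
Applying the monotone sublinear expectation $\E$ to this quasi-sure inequality gives $\E\l[\l|\CQ_a(X)\r|\r]\leq\overline{\sigma}_{aa}^2\,\E\l[\int_0^T|X_t|\,\d t\r]$, and the sub-additivity of $\E$ together with $\int_0^T|X_t|\,\d t=\sum_{k=0}^{m-1}|\xi_k|(t_{k+1}-t_k)$ yields $\E\l[\int_0^T|X_t|\,\d t\r]\leq\sum_{k=0}^{m-1}\E\l[|\xi_k|\r](t_{k+1}-t_k)=\|X\|_{\M^1}$, which is exactly the displayed chain of inequalities.

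The estimate shows that $\CQ_a$ is a bounded, hence continuous, linear map from $(\S(0,T),\|\cdot\|_{\M^1})$ into $(\L^1(T),\|\cdot\|_{\L^1})$ (note that for simple $X$ the image is even q.s.\ bounded, but the relevant continuity is the $\L^1$-estimate above). Since $\S(0,T)$ is dense in its completion $\M^1(0,T)$ and $\L^1(T)$ is complete, the bounded-linear-transformation theorem provides a unique continuous linear extension $\CQ_a:\,\M^1(0,T)\to\L^1(T)$, and passing to the limit preserves the bound $\|\CQ_a(X)\|_{\L^1}\leq\overline{\sigma}_{aa}^2\|X\|_{\M^1}$ for all $X\in\M^1(0,T)$. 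The step I expect to be the main obstacle is the first one: justifying rigorously that the quasi-surely defined process $\l<B^a\r>=\l(B^a\r)^2-2\int_0^\cdot B^a\,\d B^a$ agrees $P$-a.s.\ with the $P$-quadratic variation $\int_0^\cdot|\theta_s a|^2\,\d s$ for each $P\in\CP$, i.e.\ that the single q.s.\ object really carries the above increment bound under every measure simultaneously; once this is settled, everything downstream is a routine application of the order and sub-additivity properties of $\E$ and the density of $\S(0,T)$.
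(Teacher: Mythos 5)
Your proof is correct, but it cannot coincide with ``the paper's proof'' because the paper does not prove this lemma at all: it simply defers to Proposition~8.1.10 in \cite{peng_nonlinear_2019}. What you have written is a self-contained argument, and its key device --- representing each $P\in\CP$ so that, under $P$, the quadratic variation $\l<B^a\r>$ is absolutely continuous with density $a^T\sigma^P_s\sigma^P_s a\in[0,\overline{\sigma}_{aa}^2]$, then taking the supremum over $\CP$ --- is exactly the technique the paper itself uses in Appendix~B to prove Lemma~\ref{lem:Q-p-bound} (there via the martingale representation theorem under each $P$, rather than via your pushforward formulation; the martingale-representation phrasing is slightly cleaner, since your equality $\l<B^a\r>_t=\int_0^t|\theta_s a|^2\,\d s$ literally identifies random variables living on two different spaces, whereas what the pushforward gives is equality in law, from which the q.s.\ increment bound still follows because it is a path property). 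Indeed, your lemma is essentially the case $p=1$, $a=b$, $X$ simple of Lemma~\ref{lem:Q-p-bound}, plus the routine extension by density. The one step you flag as delicate --- that the q.s.-defined object $\l(B^a\r)^2-2\int_0^\cdot B^a\,\d B^a$ agrees $P$-a.s.\ with the $P$-quadratic variation --- is genuinely needed, but it is standard: the $G$-It\^o integral is an $\L^2$-limit of simple-process sums, $\|\cdot\|_{L^2(P)}\leq\|\cdot\|_{\L^2}$ makes it also the $P$-It\^o integral, and It\^o's formula under $P$ does the rest; the paper uses this identification tacitly in the same appendix. Two cosmetic remarks: your reading of the codomain (extension into $\L^1(T)$, with the image of simple processes landing in $\L^2(T)$ because it is q.s.\ bounded) is the consistent one --- the ``$\L^2(T)$'' for the extended map in the lemma's statement is evidently a slip --- and note that for simple $X$ the identity $\E\!\l[\int_0^T|X_t|\,\d t\r]\leq\int_0^T\E\!\l[|X_t|\r]\d t$ you derive from sub-additivity is precisely the inequality $\|X\|_{\MM^1}\leq\|X\|_{\M^1}$ mentioned in Remark~2.1.
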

    \begin{proof}
        Cf., \cite[Proposition 8.1.10]{peng_nonlinear_2019}.
    \end{proof}
    
    For $0\leq t \leq s \leq  T$ and $X\in\M^1(0,T)$, set
    \begin{align*}
        \int_t^s X_u\d \l<B^a\r>_u &:=\int_0^T X_u\ind_{[t,s)}(u) \d \l<B^a\r>_u=\CQ_a(X\ind_{[t,s)}).
    \end{align*}
    
    \begin{lem}\label{lem:I:M2-M2-map}
        Let $a\in\Rbb^n$ and $X\in\M^{2}(0,T)$. 
        Define $Z$ by
        \begin{equation*}
            Z_s:=\int_0^s X_u \d B^a_u=\CI_a\l(X\ind_{[0,s)}\r), \qquad 0\leq s\leq T.
        \end{equation*}
        Then $Z\in\M^{2}(0,T)$.
    \end{lem}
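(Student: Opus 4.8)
The plan is to reduce to simple integrands by density and then invoke completeness of $\M^2(0,T)$. First I would treat the case $X\in\S(0,T)$, say $X=\sum_{k=0}^{m-1}\xi_k\ind_{[t_k,t_{k+1})}$ with $\xi_k\in\F{t_k}$. For $s\in[t_j,t_{j+1})$ the defining formula for $\CI_a$ gives $Z_s=\sum_{k=0}^{j-1}\xi_k(B^a_{t_{k+1}}-B^a_{t_k})+\xi_j(B^a_s-B^a_{t_j})$, so that, setting $c_j:=\sum_{k=0}^{j-1}\xi_k(B^a_{t_{k+1}}-B^a_{t_k})-\xi_j B^a_{t_j}$, I can split
\begin{equation*}
    Z=\sum_{j=0}^{m-1} c_j\ind_{[t_j,t_{j+1})} + \sum_{j=0}^{m-1}\xi_j B^a\ind_{[t_j,t_{j+1})}.
\end{equation*}
Each $c_j$ is a finite sum of products of a bounded $\CF_{t_j}$-measurable random variable with increments of $B^a$, which lie in $\L^2(t_j)$; hence $c_j\in\L^2(t_j)$ and the first sum belongs to $\M^2(0,T)$ by Lemma~\ref{lem:Lp-sum-Mp}.

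For the second sum it suffices to show, for each fixed $j$, that $\xi_j B^a\ind_{[t_j,t_{j+1})}\in\M^2(0,T)$. Here I would use $B^a\in\M^2(0,T)$ from Lemma~\ref{lem:B-M2}: choosing simple processes $\beta^{(l)}\to B^a$ in $\M^2$ and localising, the processes $\xi_j\beta^{(l)}\ind_{[t_j,t_{j+1})}$ are again simple, since on each of their sub-intervals (which begins at a time $\geq t_j$) the coefficient is the product of $\xi_j$ with a coefficient of $\beta^{(l)}$, hence bounded and measurable with respect to the $\sigma$-algebra at the sub-interval's left endpoint; because $\xi_j$ is bounded they converge to $\xi_j B^a\ind_{[t_j,t_{j+1})}$ in $\M^2$. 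This last point is the step I expect to be the main obstacle: one must check that multiplying the genuinely continuous-in-time process $B^a$ by the bounded adapted random variable $\xi_j$ and restricting to $[t_j,t_{j+1})$ keeps the approximating coefficients adapted to the correct sub-$\sigma$-algebras. Granting this, $Z\in\M^2(0,T)$ for every simple $X$.

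To pass to general $X\in\M^2(0,T)$, I would pick simple $X^{(m)}\to X$ in $\M^2$ and let $Z^{(m)}$ be the corresponding processes, which lie in $\M^2(0,T)$ by the previous step. By linearity of $\CI_a$ and the estimate in Lemma~\ref{lem:I-properties},
\begin{equation*}
    \l\|Z_s - Z^{(m)}_s\r\|_{\L^2}^2 = \l\|\CI_a\l((X-X^{(m)})\ind_{[0,s)}\r)\r\|_{\L^2}^2 \leq \overline{\sigma}_{aa}^2\,\E\!\l[\int_0^T\l|X_u - X^{(m)}_u\r|^2\,\d u\r]
\end{equation*}
for every $s\in[0,T]$, and the right-hand side is at most $\overline{\sigma}_{aa}^2\l\|X-X^{(m)}\r\|_{\M^2}^2$ by the Remark comparing $\|\cdot\|_{\MM^2}$ with $\|\cdot\|_{\M^2}$. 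Integrating in $s$ yields $\l\|Z-Z^{(m)}\r\|_{\M^2}^2\leq T\,\overline{\sigma}_{aa}^2\l\|X-X^{(m)}\r\|_{\M^2}^2\to 0$. Since $\M^2(0,T)$ is complete and each $Z^{(m)}$ lies in it, the limit $Z$ lies in $\M^2(0,T)$ as well, which is the claim.
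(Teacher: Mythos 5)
Your proposal is correct and follows essentially the same route as the paper: handle simple $X$ by decomposing $Z$ into a piecewise-$\L^2(t_j)$-constant part (Lemma~\ref{lem:Lp-sum-Mp}) plus products of bounded $\CF_{t_j}$-measurable variables with $B^a\in\M^2(0,T)$ restricted to subintervals, then pass to general $X$ by density and completeness using the bound from Lemma~\ref{lem:I-properties}. The step you flag as the main obstacle --- that multiplying an $\M^2$ process by a bounded adapted random variable and restricting to $[t_j,t_{j+1})$ stays in $\M^2$ --- is exactly the content of the paper's Lemma~\ref{lem:A1} and Corollaries~\ref{cor:A2}--\ref{cor:A3}, which the paper's proof cites and whose appendix proof matches your inline argument.
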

    
    \begin{lem}\label{lem:QV-M1}
        For $a\in\Rbb^n$, we have $\l<B^a\r>\in\M^1(0,T)$.
    \end{lem}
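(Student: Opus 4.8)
The plan is to realise $\l<B^a\r>$ as the limit, in the seminorm underlying $\|\cdot\|_{\M^1}$, of step-process approximations that already lie in $\M^1(0,T)$, and then to invoke completeness of $\M^1(0,T)$. Fix $m\in\mathbb{N}$, set $t_k:=kT/m$ for $0\leq k\leq m$, and define
\begin{equation*}
X^m_s:=\sum_{k=0}^{m-1}\l<B^a\r>_{t_k}\,\ind_{[t_k,t_{k+1})}(s),\qquad 0\leq s\leq T.
\end{equation*}
Since the quadratic variation satisfies $\l<B^a\r>_{t_k}\in\L^1(t_k)$ by its very definition, Lemma~\ref{lem:Lp-sum-Mp} (applied with $p=1$) shows $X^m\in\M^1(0,T)$ for each $m$.

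Next I would estimate the $\M^1$-distance between $X^m$ and $\l<B^a\r>$. Because the intervals $[t_k,t_{k+1})$ partition $[0,T)$,
\begin{equation*}
\int_0^T\E\!\l[\,\l|X^m_s-\l<B^a\r>_s\r|\,\r]\d s=\sum_{k=0}^{m-1}\int_{t_k}^{t_{k+1}}\E\!\l[\,\l|\l<B^a\r>_s-\l<B^a\r>_{t_k}\r|\,\r]\d s.
\end{equation*}
The only quantitative ingredient needed is an $\L^1$-bound on the increments of $\l<B^a\r>$. For $t_k\leq s$ one has $\l<B^a\r>_s-\l<B^a\r>_{t_k}=\CQ_a(\ind_{[t_k,s)})$, and applying Lemma~\ref{lem:Q-properties} to the simple integrand $\ind_{[t_k,s)}\in\S(0,T)$ gives
\begin{equation*}
\E\!\l[\,\l|\l<B^a\r>_s-\l<B^a\r>_{t_k}\r|\,\r]\leq\overline{\sigma}_{aa}^2\,\E\!\l[\int_0^T\ind_{[t_k,s)}(u)\d u\r]=\overline{\sigma}_{aa}^2\,(s-t_k).
\end{equation*}
Substituting this estimate and integrating over each subinterval yields
\begin{equation*}
\int_0^T\E\!\l[\,\l|X^m_s-\l<B^a\r>_s\r|\,\r]\d s\leq\overline{\sigma}_{aa}^2\sum_{k=0}^{m-1}\int_{t_k}^{t_{k+1}}(s-t_k)\d s=\frac{\overline{\sigma}_{aa}^2}{2}\sum_{k=0}^{m-1}\l(t_{k+1}-t_k\r)^2=\frac{\overline{\sigma}_{aa}^2\,T^2}{2m},
\end{equation*}
which converges to $0$ as $m\to\infty$. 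Consequently $(X^m)_m$ is Cauchy in $\M^1(0,T)$ and converges to $\l<B^a\r>$ in $\|\cdot\|_{\M^1}$; as $\M^1(0,T)$ is complete, this identifies $\l<B^a\r>$ as an element of $\M^1(0,T)$.

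Concerning where the work lies: the scheme is the standard one for showing membership in $\M^1(0,T)$, so the computation above is routine once the increment bound is available. The one point that requires care is precisely that $\L^1$-modulus-of-continuity estimate for the quadratic variation, and the pleasant observation here is that it does not demand any appeal to the pathwise monotonicity or continuity of $\l<B^a\r>$: testing the $\d\l<B^a\r>$-integral against the indicator $\ind_{[t_k,s)}$ reduces it immediately to the continuity bound of Lemma~\ref{lem:Q-properties}. The remaining subtlety, identifying the abstract $\M^1$-limit of the Cauchy sequence $(X^m)_m$ with the process $\l<B^a\r>$ itself, is immediate from the vanishing of $\int_0^T\E\!\l[\,\l|X^m_s-\l<B^a\r>_s\r|\,\r]\d s$.
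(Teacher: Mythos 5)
Your proof is correct, but it takes a genuinely different route from the paper's. The paper's own argument is purely structural and fits in two lines: by definition $\l<B^a\r>_t=(B^a_t)^2-2\int_0^t B^a_s\d B^a_s$, and since $B^a\in\M^2(0,T)$ (Lemma~\ref{lem:B-M2}) one gets $(B^a)^2\in\M^1(0,T)$, while Lemma~\ref{lem:I:M2-M2-map} puts the running integral $\int_0^\cdot B^a_s\d B^a_s$ in $\M^2(0,T)\subseteq\M^1(0,T)$; the quadratic variation is then a sum of two elements of $\M^1(0,T)$. You instead transplant the discretisation scheme the paper uses to prove Lemma~\ref{lem:B-M2}: approximate $\l<B^a\r>$ by the step processes $X^m=\sum_k \l<B^a\r>_{t_k}\ind_{[t_k,t_{k+1})}$, which lie in $\M^1(0,T)$ by Lemma~\ref{lem:Lp-sum-Mp}, and control the error through the $\L^1$-modulus of continuity
\begin{equation*}
    \E\!\l[\l|\l<B^a\r>_s-\l<B^a\r>_{t_k}\r|\r]=\l\|\CQ_a\!\l(\ind_{[t_k,s)}\r)\r\|_{\L^1}\leq\overline{\sigma}_{aa}^2\,(s-t_k),
\end{equation*}
obtained by testing $\CQ_a$ against the deterministic indicator $\ind_{[t_k,s)}\in\S(0,T)$ via Lemma~\ref{lem:Q-properties}. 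This identity $\CQ_a(\ind_{[t_k,s)})=\l<B^a\r>_s-\l<B^a\r>_{t_k}$ holds by the very definition of $\CQ_a$ on simple integrands, and there is no circularity since Lemma~\ref{lem:Q-properties} only requires $\l<B^a\r>$ to be defined pointwise (which needs only Lemma~\ref{lem:B-M2}), not its membership in $\M^1(0,T)$. What the two approaches buy: the paper's is shorter and exploits the algebraic identity, though it silently uses that $X\in\M^2(0,T)$ implies $X^2\in\M^1(0,T)$; yours avoids both that fact and Lemma~\ref{lem:I:M2-M2-map} entirely, yields an explicit convergence rate $\overline{\sigma}_{aa}^2 T^2/(2m)$, and, as you note, needs no pathwise monotonicity or continuity of the quadratic variation. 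Both are complete proofs.
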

    \begin{proof}
        Since $B^a\in\M^2(0,T)$, we have $(B^a)^2\in\M^1(0,T)$ and $\int_0^\cdot B^a_s \d B^a_s\in\M^2(0,T)\subseteq \M^1(0,T)$ due to Lemma~\ref{lem:I:M2-M2-map}.
        Hence, $\l<B^a\r>\in\M^1(0,T)$ as sum of elements in $\M^1(0,T)$.
    \end{proof}
    
    Analogous to Lemma~\ref{lem:I:M2-M2-map}, we have the following result.
    \begin{lem}\label{lem:Q:M1-M1-map}
        Let $a\in\Rbb^n$ and $X\in\M^{1}(0,T)$. 
        Define $Z$ by
        \begin{equation*}
            Z_s:=\int_0^s X_u \d \l< B^a \r>_u=\CQ_a\l(X\ind_{[0,s)}\r), \qquad 0\leq s\leq T.
        \end{equation*}
        Then $Z\in\M^{1}(0,T)$.
    \end{lem}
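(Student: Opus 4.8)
The plan is to follow the proof of Lemma~\ref{lem:I:M2-M2-map} closely, establishing the statement first for simple integrands and then passing to the general case by density together with the completeness of $\M^1(0,T)$. As a preliminary observation, truncation maps $\M^1(0,T)$ into itself with $\|X\ind_{[0,s)}\|_{\M^1}\le\|X\|_{\M^1}$, so that $X\ind_{[0,s)}\in\M^1(0,T)$ and hence $Z_s=\CQ_a(X\ind_{[0,s)})\in\L^1(T)$ is well defined for every $0\le s\le T$ by Lemma~\ref{lem:Q-properties}.

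\textbf{Step 1 (simple integrands).} I first treat the case $X=\sum_{k=0}^{m-1}\xi_k\ind_{[t_k,t_{k+1})}\in\S(0,T)$ with bounded $\xi_k\in\F{t_k}$. Then
\begin{equation*}
Z_s=\sum_{k=0}^{m-1}\xi_k\l(\l<B^a\r>_{s\wedge t_{k+1}}-\l<B^a\r>_{s\wedge t_k}\r),
\end{equation*}
which is $\CF_s$-measurable and, since $\l<B^a\r>_r\in\L^1(r)$ for every $r$ and the $\xi_k$ are bounded, lies in $\L^1(s)$. The process $Z$ is continuous in $s$ but not simple, so I approximate it: for a partition $0=s_0<\dots<s_N=T$ I set $Z^N:=\sum_{j=0}^{N-1}Z_{s_j}\ind_{[s_j,s_{j+1})}$. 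Because $Z_{s_j}\in\L^1(s_j)$, Lemma~\ref{lem:Lp-sum-Mp} gives $Z^N\in\M^1(0,T)$. Applying the estimate of Lemma~\ref{lem:Q-properties} to $X\ind_{[s_j,u)}$ yields, for $u\in[s_j,s_{j+1})$,
\begin{equation*}
\E\l[\l|Z_u-Z_{s_j}\r|\r]=\E\l[\l|\CQ_a\l(X\ind_{[s_j,u)}\r)\r|\r]\le\overline{\sigma}_{aa}^2\l\|X\ind_{[s_j,u)}\r\|_{\M^1}\le\overline{\sigma}_{aa}^2\,C\,(u-s_j),
\end{equation*}
where $C:=\sup_{v}\E[|X_v|]<\infty$ by boundedness of the $\xi_k$. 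Summing over $j$ and integrating gives $\|Z-Z^N\|_{\M^1}\le\overline{\sigma}_{aa}^2\,C\,T\,\max_j(s_{j+1}-s_j)\to0$ as the mesh tends to zero. Since $\M^1(0,T)$ is complete and each $Z^N\in\M^1(0,T)$, it follows that $Z\in\M^1(0,T)$.

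\textbf{Step 2 (general integrands).} For arbitrary $X\in\M^1(0,T)$, I choose simple $X^{(n)}\in\S(0,T)$ with $\|X-X^{(n)}\|_{\M^1}\to0$ and let $Z^{(n)}_s:=\CQ_a(X^{(n)}\ind_{[0,s)})$; by Step 1, $Z^{(n)}\in\M^1(0,T)$. Linearity of $\CQ_a$ and Lemma~\ref{lem:Q-properties} give, for every $s$,
\begin{equation*}
\E\l[\l|Z_s-Z^{(n)}_s\r|\r]=\E\l[\l|\CQ_a\l((X-X^{(n)})\ind_{[0,s)}\r)\r|\r]\le\overline{\sigma}_{aa}^2\l\|X-X^{(n)}\r\|_{\M^1},
\end{equation*}
whence $\|Z-Z^{(n)}\|_{\M^1}\le T\,\overline{\sigma}_{aa}^2\,\|X-X^{(n)}\|_{\M^1}\to0$. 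Completeness of $\M^1(0,T)$ then forces the limit $Z$ of the sequence $(Z^{(n)})\subseteq\M^1(0,T)$ to lie in $\M^1(0,T)$, which proves the claim.

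\textbf{Main obstacle.} Step~2 is routine, being driven entirely by the $\L^1$--$\M^1$ bound of Lemma~\ref{lem:Q-properties}. The genuine work is Step~1, namely verifying that the continuous, non-simple process $Z$ arising from a simple integrand lies in the completion $\M^1(0,T)$. This rests on the $\L^1$-continuity of $s\mapsto Z_s$, itself a consequence of the continuity and integrability of $\l<B^a\r>$ (Lemma~\ref{lem:QV-M1}), and one must take care that the approximants $Z^N$ remain adapted, i.e. $Z_{s_j}\in\L^1(s_j)$, so that Lemma~\ref{lem:Lp-sum-Mp} is applicable.
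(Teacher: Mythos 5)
Your proof is correct, and while it shares the paper's overall skeleton (simple integrands first, then density plus completeness, with your Step~2 essentially identical to the paper's second half), the core step for simple integrands is argued by a genuinely different route. The paper decomposes $Z$ \emph{algebraically} as a finite sum of terms $\xi_k\l(\l<B^a\r>_\cdot-\l<B^a\r>_{t_k}\r)\ind_{[t_k,t_{k+1})}$ and $\xi_k\l(\l<B^a\r>_{t_{k+1}}-\l<B^a\r>_{t_k}\r)\ind_{[t_{k+1},T)}$, and shows each summand lies in $\M^1(0,T)$ by combining Lemma~\ref{lem:Lp-sum-Mp}, the membership $\l<B^a\r>\in\M^1(0,T)$ (Lemma~\ref{lem:QV-M1}), and the closure of $\M^1(0,T)$ under multiplication by simple processes (Corollary~\ref{cor:A3}). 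You instead discretise $Z$ in time, approximating it by the adapted piecewise-constant processes $Z^N=\sum_j Z_{s_j}\ind_{[s_j,s_{j+1})}$, and control $\|Z-Z^N\|_{\M^1}$ through the quantitative $\L^1$-Lipschitz estimate $\E\!\l[\l|Z_u-Z_{s_j}\r|\r]\le\overline{\sigma}_{aa}^2\,C\,(u-s_j)$ coming from Lemma~\ref{lem:Q-properties} and the boundedness of the $\xi_k$. What each approach buys: yours needs neither Corollary~\ref{cor:A3} nor Lemma~\ref{lem:QV-M1} (only $\l<B^a\r>_r\in\L^1(r)$, Lemma~\ref{lem:Lp-sum-Mp}, and the norm bound), and it yields a modulus of $\L^1$-continuity of $s\mapsto Z_s$ as a by-product, mirroring the classical It\^o-theory argument; the paper's route avoids any continuity estimate altogether and runs purely on its appendix product-closure toolkit, which it also reuses for Lemmas~\ref{lem:I:M2-M2-map} and~\ref{lem:int:M1-M1-map}. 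One point you treat too tersely: the claim $Z_{s_j}\in\L^1(s_j)$ needs the fact that multiplying an element of $\L^1(s_j)$ by a bounded $\CF_{s_j}$-measurable random variable stays in $\L^1(s_j)$; this is true (approximate by elements of $\F{s_j}$ and use monotonicity of $\E$) and is precisely the random-variable analogue of Lemma~\ref{lem:A1} and Corollary~\ref{cor:A2}, but it deserves the one-line justification rather than being folded into ``the $\xi_k$ are bounded.''
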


    \begin{lem}\label{lem:t-Mp}
        Let $p\geq 1$, then $\textnormal{id}_{[0,T]}\in\M^p(0,T)$.
    \end{lem}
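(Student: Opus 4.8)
The plan is to approximate the deterministic identity process $s\mapsto s$ directly by deterministic step functions, which already belong to $\S(0,T)$, and to control the approximation error using the explicit form of $\|\cdot\|_{\M^p}$.

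First I would fix, for each $m\in\mathbb{N}$, the uniform partition $t_k:=kT/m$, $0\leq k\leq m$, and set
\begin{equation*}
    X^{(m)}_s:=\sum_{k=0}^{m-1} t_k\,\ind_{[t_k,t_{k+1})}(s),\qquad s\in[0,T].
\end{equation*}
Each coefficient $\xi_k:=t_k$ is a constant, hence a bounded $\CF_{t_k}$-measurable function, so $\xi_k\in\F{t_k}$ and therefore $X^{(m)}\in\S(0,T)$ by the very definition of the space of simple processes.

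Next I would estimate $\|\textnormal{id}_{[0,T]}-X^{(m)}\|_{\M^p}$. For $s\in[t_k,t_{k+1})$ the integrand $|s-X^{(m)}_s|=s-t_k$ is deterministic and bounded by the mesh size $T/m$. Since the sublinear expectation of a deterministic constant equals that constant (because $E_P[c]=c$ for every $P\in\CP$, whence $\E[c]=c$), one obtains $\E[|s-X^{(m)}_s|^p]=(s-t_k)^p\leq (T/m)^p$ for every $s\in[0,T]$. Integrating over $[0,T]$ gives
\begin{equation*}
    \l\|\textnormal{id}_{[0,T]}-X^{(m)}\r\|_{\M^p}^p=\int_0^T \E\!\l[\l|s-X^{(m)}_s\r|^p\r]\d s\leq T\left(\frac{T}{m}\right)^p\longrightarrow 0\quad\text{as }m\to\infty.
\end{equation*}
Hence $X^{(m)}\to\textnormal{id}_{[0,T]}$ with respect to $\|\cdot\|_{\M^p}$, and since $\M^p(0,T)$ is by definition the completion of $\S(0,T)$ under this norm, the limit $\textnormal{id}_{[0,T]}$ lies in $\M^p(0,T)$.

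There is essentially no serious obstacle here: the only point requiring a moment's care is the observation that the sublinear expectation collapses to the ordinary value on deterministic integrands, which reduces the entire argument to the elementary uniform approximation of the continuous function $s\mapsto s$ by step functions. In particular, no probabilistic input beyond $\E[c]=c$ for constants $c$ is needed, and the same argument applies verbatim to any bounded continuous deterministic function.
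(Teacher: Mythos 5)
Your proposal is correct and is essentially identical to the paper's own proof: the same uniform partition, the same deterministic step-function approximation $\sum_k t_k \ind_{[t_k,t_{k+1})}$, and the same error bound $T^{p+1}/m^p$ followed by the completion argument. The only addition is your explicit remark that $\E$ reduces to the ordinary value on constants, which the paper leaves implicit.
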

    \begin{proof}
        For $m>0$, set $t^m_k:=\frac{k\,T}{m}$ and define
        \begin{equation*}
            f^m:=\sum_{k=0}^{m-1}t_k\ind_{[t_k,t_{k+1})}\in\S(0,T).
        \end{equation*}
        Then
        \begin{equation*}
            \int_0^T E\l[\l|f^m(s) - s \r|^p \r] \d s
            = \sum_{k=0}^{m-1} \int_{t_k}^{t_{k+1}} \l| t_k -s \r|^p \d s 
            \leq \sum_{k=0}^{m-1} \sup_{t_k\leq s\leq t_{k+1}} \l| t_k -s \r|^p \frac{T}{m}
            = \frac{T^{p+1}}{m^{p}}.
        \end{equation*}
        The right-hand side vanishes when $m\rightarrow\infty$. 
        Hence, we deduce that $\textnormal{id}_{[0,T]}\in\M^p(0,T)$.
    \end{proof}
    
    \begin{lem}\label{lem:int:M1-M1-map}
        Let $X\in\M^1(0,T)$, and define $Z$ by
        \begin{equation*}
            Z_s:=\int_0^s X_u \d u, \qquad 0\leq s\leq T.
        \end{equation*}
        Then $Z\in\M^1(0,T)$.
    \end{lem}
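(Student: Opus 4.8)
The plan is to use a bounded-linear-extension argument: first establish the claim for simple processes $X\in\S(0,T)$, then show that the assignment $X\mapsto Z$ is linear and bounded with respect to $\|\cdot\|_{\M^1}$, and finally extend it continuously to all of $\M^1(0,T)$ by invoking the density of $\S(0,T)$ and the completeness of $\M^1(0,T)$. Linearity of $X\mapsto Z$ is immediate from the definition of the time integral, so the two substantial points are the membership $Z\in\M^1(0,T)$ for simple $X$ and the norm estimate.

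For the first point, fix a simple process $X=\sum_{k=0}^{m-1}\xi_k\ind_{[t_k,t_{k+1})}\in\S(0,T)$. Then, for every $s$,
\begin{equation*}
    Z_s=\int_0^s X_u\,\d u=\sum_{k=0}^{m-1}\xi_k\,\ell_k(s),\qquad \ell_k(s):=\int_0^s\ind_{[t_k,t_{k+1})}(u)\,\d u,
\end{equation*}
so $Z$ is, on each subinterval, an \emph{affine} function of time with bounded $\CF_{t_k}$-measurable coefficients. Each $\ell_k$ is a $1$-Lipschitz, non-negative, deterministic function vanishing on $[0,t_k]$. Mimicking the discretisation in the proof of Lemma~\ref{lem:t-Mp}, I approximate $\ell_k$ uniformly on $[0,T]$ by step functions $\ell_k^N=\sum_i c_{k,i}^N\ind_{[s_i,s_{i+1})}$ built on a partition refining $\{t_0,\dots,t_m\}$ with $c_{k,i}^N=0$ whenever $s_i<t_k$. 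Since $\xi_k$ is bounded and $\CF_{t_k}$-measurable, each $\xi_k\ell_k^N=\sum_i(c_{k,i}^N\xi_k)\ind_{[s_i,s_{i+1})}$ lies in $\S(0,T)$, and because $\ell_k^N$ is deterministic,
\begin{equation*}
    \l\|\xi_k\ell_k-\xi_k\ell_k^N\r\|_{\M^1}=\int_0^T\l|\ell_k(s)-\ell_k^N(s)\r|\,\E\l[|\xi_k|\r]\,\d s\leq T\,\E\l[|\xi_k|\r]\,\sup_{0\leq s\leq T}\l|\ell_k(s)-\ell_k^N(s)\r|\xrightarrow[N\to\infty]{}0.
\end{equation*}
Hence $\xi_k\ell_k\in\M^1(0,T)$ for each $k$, and $Z=\sum_k\xi_k\ell_k\in\M^1(0,T)$ as a finite sum.

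For the norm estimate, I use that for simple $X$ the paths satisfy $|Z_s|\leq\int_0^s|X_u|\,\d u\leq\int_0^T|X_u|\,\d u$ pointwise in $\omega$; monotonicity and sub-additivity of $\E$ then give, for every $s\in[0,T]$,
\begin{equation*}
    \E\l[|Z_s|\r]\leq\E\l[\int_0^T|X_u|\,\d u\r]\leq\int_0^T\E\l[|X_u|\r]\,\d u=\|X\|_{\M^1},
\end{equation*}
where the middle inequality is exactly $\|X\|_{\MM^1}\leq\|X\|_{\M^1}$ as noted in the remark above. Integrating over $s$ yields $\|Z\|_{\M^1}\leq T\,\|X\|_{\M^1}$, so $X\mapsto Z$ is a bounded linear map on the dense subspace $(\S(0,T),\|\cdot\|_{\M^1})$ of the Banach space $\M^1(0,T)$.

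By the bounded-linear-transformation theorem the map extends uniquely to a bounded linear operator on $\M^1(0,T)$ with the same bound, and for $X\in\M^1(0,T)$ this extension is, by construction, the object denoted $Z_s=\int_0^s X_u\,\d u$. I expect the only delicate step to be the membership $Z\in\M^1(0,T)$ in the simple case: because $Z$ is piecewise affine rather than piecewise constant in time, it is not itself an element of $\S(0,T)$ and must be recovered as a limit of step processes, which is precisely what the approximation of the deterministic profiles $\ell_k$ accomplishes. If one insists on identifying the extended operator with a genuinely pathwise Lebesgue integral, this can be done along a subsequence $X^n\to X$ converging quasi-surely, but the estimate above already delivers the asserted conclusion $Z\in\M^1(0,T)$.
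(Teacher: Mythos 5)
Your proof is correct and follows essentially the same route as the paper's: for simple $X$ the primitive $Z$ is piecewise affine with bounded adapted coefficients, and its membership in $\M^1(0,T)$ is obtained by approximating the ramp profiles by step processes (the paper delegates exactly this to Lemma~\ref{lem:t-Mp} and Corollary~\ref{cor:A3}), after which the general case follows from the bound $\l\|Z\r\|_{\M^1}\leq T\l\|X\r\|_{\M^1}$ together with density of $\S(0,T)$ and completeness of $\M^1(0,T)$ --- which is precisely the paper's limit passage, merely packaged by you as a bounded-linear-extension argument rather than as a direct estimate of $\E\l[\l|Z^m_s-Z_s\r|\r]$ for an approximating sequence. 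The one point you handle differently is the identification of the abstract extension with the pathwise integral, which you correctly flag and sketch via a quasi-surely convergent subsequence; the paper's formulation avoids this by estimating against the pathwise-defined $Z$ directly.
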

    
    For $a,b\in\Rbb^n$, we define the quadratic co-variation of $B^a$ and $B^b$ by
    \begin{equation*}
        \l<B^a,B^b\r>:=\frac{1}{4}\l( \l<B^{a+b}\r> - \l<B^{a-b}\r>\r),
    \end{equation*}
    and, for $X\in\M^{1}(0,T)$, we define
    \begin{equation*}
        \int_t^s X_u \d\l<B^a,B^b\r>_u := \frac{1}{4}\l( \CQ_{a+b}(X\ind_{[t,s)}) - \CQ_{a-b}(X\ind_{[t,s)}) \r).
    \end{equation*}

    \begin{lem}\label{lem:Q-p-bound}
        Let $a,b\in\Rbb^n$, $p\geq 1$, $X\in\M^p(0,T)$ and $0\leq t\leq s\leq T$.
        Then
        \begin{equation*}
            \E\l[ \sup_{t\leq w\leq s}\l|\int_t^w X_u \d \l<B^a,B^b\r>_u\r|^p \r] \leq \overline{\sigma}_{ab}^{2p} \l(s-t\r)^{p-1} \int_t^s \E\l[ \l|X_u \r|^p \r] \d u,
        \end{equation*}
        where
        \begin{equation*}
            \overline{\sigma}_{ab}:=\sup_{\sigma\in\Sigma} \sqrt{\l| a \sigma\sigma b \r|}.
        \end{equation*}
    \end{lem}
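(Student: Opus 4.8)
The plan is to prove the statement first for simple integrands $X\in\S(0,T)$, where everything is an explicit finite sum, and then to extend to $\M^p(0,T)$ by density; the sharp constant $\overline{\sigma}_{ab}^2$ will come from a pathwise bound on the increments of the covariation rather than from polarisation. The key observation is that for every $P\in\CP$, say $P=P_0\circ(\theta\bullet B)^{-1}$ with $\theta\in\CA^\Sigma$, the canonical process satisfies, $P$-almost surely,
\[
\l<B^a,B^b\r>_v - \l<B^a,B^b\r>_u = \int_u^v a^T\theta_r\theta_r b\,\d r,\qquad 0\leq u\leq v\leq T,
\]
by the polarisation identity defining $\l<B^a,B^b\r>$ together with the representation of $\l<B^c\r>$ as the classical quadratic variation of $B^c$ under $P$. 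Since each $\theta_r\in\Sigma$ is symmetric, $|a^T\theta_r\theta_r b|\leq\overline{\sigma}_{ab}^2$, so the increment is bounded by $\overline{\sigma}_{ab}^2(v-u)$, $P$-almost surely, and hence quasi-surely. Replacing $X$ by $X\ind_{[t,s)}$ reduces the integral over $[t,w]$ to the operators $\CQ_{a\pm b}$ already introduced before Lemma~\ref{lem:Q-properties}.

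Next, for a simple process $X=\sum_k\xi_k\ind_{[t_k,t_{k+1})}$ I would write $\int_t^w X_u\,\d\l<B^a,B^b\r>_u$ as the corresponding finite sum of covariation increments and bound it, using the increment estimate above, by $\overline{\sigma}_{ab}^2\int_t^w|X_u|\,\d u\leq\overline{\sigma}_{ab}^2\int_t^s|X_u|\,\d u$, uniformly in $w\in[t,s]$. This disposes of the running supremum at once, since the final bound is independent of $w$. Raising to the $p$-th power and applying Hölder's inequality in the time variable then gives, $P$-almost surely,
\[
\sup_{t\leq w\leq s}\l|\int_t^w X_u\,\d\l<B^a,B^b\r>_u\r|^p \leq \overline{\sigma}_{ab}^{2p}(s-t)^{p-1}\int_t^s|X_u|^p\,\d u.
\]
Taking $E_P$, using Fubini together with $E_P[|X_u|^p]\leq\E[|X_u|^p]$, and finally taking the supremum over $P\in\CP$ yields the claimed inequality for simple $X$, the right-hand side having become independent of $P$.

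Finally I would extend to arbitrary $X\in\M^p(0,T)$ by approximation. Choosing simple $X^{(m)}\to X$ in $\M^p$ (which also converges in $\M^1$, so that $\CQ_{a\pm b}(X^{(m)})$ converges) and applying the simple-process inequality to the differences $X^{(m)}-X^{(\ell)}$ shows that the processes $w\mapsto\int_t^w X^{(m)}_u\,\d\l<B^a,B^b\r>_u$ are Cauchy in the norm $\E[\sup_{t\leq w\leq s}|\cdot|^p]^{1/p}$; their limit defines the integral for $X$, consistent with the extension of $\CQ_{a\pm b}$ from Lemma~\ref{lem:Q-properties}, and the inequality passes to the limit. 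The main obstacle is precisely this last step: one must verify that the running supremum is compatible with the $\M^p$-limit defining the integral, equivalently that the space of processes with finite $\E[\sup_w|\cdot|^p]$-norm is complete and that the approximating integrals converge there, and, upstream, that the sharp constant $\overline{\sigma}_{ab}^2$ genuinely controls the covariation increments quasi-surely rather than merely the cruder constant produced by naive polarisation of the one-dimensional bounds on $\l<B^{a\pm b}\r>$.
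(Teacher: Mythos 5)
Your proof is correct, and at its core it runs on the same mechanism as the paper's: under each $P\in\CP$ the covariation $\l<B^a,B^b\r>$ is absolutely continuous in time with density of the form $a^T\sigma\sigma b$ for a $\Sigma$-valued $\sigma$, so its increments are bounded by $\overline{\sigma}_{ab}^2$ times elapsed time; H\"older in the time variable, $E_P\leq\E$, and the representation $\E=\sup_{P\in\CP}E_P$ then give the estimate with the sharp constant. The differences are in the scaffolding, and they cut both ways. First, the paper produces the density by invoking the martingale representation theorem under each $P$ (writing $B=\sigma^P\bullet W^P$ with $\sigma^P$ $\Sigma$-valued and defined on the canonical space), whereas you read it off the pushforward form $P=P_0\circ(\theta\bullet B)^{-1}$. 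Your route is more elementary, but as literally written your identity does not type-check: $\theta$ lives on the original Wiener space, while $\l<B^a,B^b\r>$ under $P$ is a process on the canonical space, so there is no pointwise $P$-a.s.\ identity between them. The repair is to observe that the law of $\l<B^a,B^b\r>$ under $P$ coincides with the law of $\int_0^\cdot a^T\theta_r\theta_r b \,\d r$ under $P_0$ (using, as you note, the identification of the $G$-quadratic variation with the classical one under each $P$), and since the increment bound is an event determined by the path of the covariation process and has full $P_0$-measure, it holds $P$-a.s., hence quasi-surely---which is all you actually use. Second, you prove the inequality for simple $X$ and extend by a Cauchy argument in the norm $\E\!\l[\sup_{t\leq w\leq s}\l|\cdot\r|^p\r]^{1/p}$, whereas the paper argues directly for general $X\in\M^p(0,T)$; the paper's shortcut tacitly assumes that the extension $\CQ_{a\pm b}$ (an $\L^1$-limit of simple integrals) agrees $P$-a.s.\ with the classical Lebesgue--Stieltjes integral for arbitrary integrands, and that the running supremum is a legitimate argument of $\E$. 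Your two-step structure makes exactly this hidden identification explicit---your closing sentence correctly isolates it as the real work---at the cost of an extra approximation layer, which goes through since $\M^p$-convergence implies $\M^1$-convergence on $[0,T]$ and the limit is consistent with $\CQ_{a\pm b}$ by uniqueness of $\L^1$-limits.
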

    
    \begin{lem}\label{lem:I-p-bound}
        Let $a\in\Rbb^n$, $p\geq 2$, $X\in\M^p(0,T)$ and $0\leq t\leq s\leq T$.
        Then
        \begin{equation*}
            \E\l[ \sup_{t\leq w\leq s} \l|\int_t^w X_u \d B^a_u\r|^p \r] \leq C_p\,\overline{\sigma}^{p}_{aa} \l(s-t\r)^{\frac{p-2}{2}} \int_t^s \E\l[ \l|X_u \r|^p \r] \d u,
        \end{equation*}
        where $C_p>0$ is the constant from the Burkholder-Davis-Gundy inequality.
    \end{lem}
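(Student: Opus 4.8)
The plan is to exploit the representation $\E[\cdot]=\sup_{P\in\CP}E_P[\cdot]$ in order to reduce the claim to the classical Burkholder--Davis--Gundy (BDG) inequality applied under each individual measure $P\in\CP$, and then to take the supremum. I would first establish the estimate for simple integrands $X\in\S(0,T)$ and afterwards extend it to all of $\M^p(0,T)$ by a density argument.

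Fix $P\in\CP$, say $P=P_0\circ(\theta\bullet B)^{-1}$ with $\theta\in\CA^\Sigma$. For a simple process $X$ the $G$-integral $\CI_a(X\ind_{[t,w)})=\int_t^w X_u\d B^a_u$ coincides $P$-almost surely with the classical It\^o integral of $X$ against the continuous $P$-martingale $B^a=a^TB$, so the process $w\mapsto\int_t^w X_u\d B^a_u$ admits a continuous modification and its running supremum is a genuine random variable. Under $P$ the quadratic variation $\l<B^a\r>$ coincides $P$-a.s.\ with the classical one and satisfies $\d\l<B^a\r>_u\leq\overline{\sigma}_{aa}^2\,\d u$, since it is given by $\int_0^\cdot a^T\theta_r\theta_r a\,\d r$ and $a^T\theta_r\theta_r a\leq\overline{\sigma}_{aa}^2$ by definition of $\overline{\sigma}_{aa}$. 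Hence the classical BDG inequality yields
\begin{align*}
E_P\l[\sup_{t\leq w\leq s}\l|\int_t^w X_u\d B^a_u\r|^p\r]
&\leq C_p\,E_P\l[\l(\int_t^s |X_u|^2\,\d\l<B^a\r>_u\r)^{\frac{p}{2}}\r]\\
&\leq C_p\,\overline{\sigma}_{aa}^{p}\,E_P\l[\l(\int_t^s |X_u|^2\,\d u\r)^{\frac{p}{2}}\r].
\end{align*}
Since $p\geq 2$, H\"older's inequality in the time variable on $[t,s]$ gives $\l(\int_t^s|X_u|^2\,\d u\r)^{p/2}\leq (s-t)^{\frac{p-2}{2}}\int_t^s |X_u|^p\,\d u$, and by Fubini's theorem $E_P\l[\int_t^s|X_u|^p\,\d u\r]=\int_t^s E_P\l[|X_u|^p\r]\,\d u\leq\int_t^s\E\l[|X_u|^p\r]\,\d u$.

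Combining these bounds and taking the supremum over $P\in\CP$, the left-hand sides assemble to $\E[\sup_{t\leq w\leq s}|\int_t^w X_u\d B^a_u|^p]$ while the right-hand side is independent of $P$, which proves the claim for $X\in\S(0,T)$. To pass to a general $X\in\M^p(0,T)$, I would choose simple processes $X^{(m)}\to X$ in $\M^p(0,T)$; applying the just-proven estimate to the differences $X^{(m)}-X^{(m')}$ shows that the running-supremum processes form a Cauchy sequence, so the $\M^p$-limit integral inherits the bound by passing to the limit.

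The main obstacle is the consistency statement of the second paragraph: one must verify that the abstractly-defined $G$-integral $\CI_a$ agrees $P$-a.s.\ with the classical It\^o integral under each $P\in\CP$ (so that the pathwise BDG inequality is applicable), together with the measurability and continuity of the running supremum $\sup_{t\leq w\leq s}|\int_t^w X_u\d B^a_u|$. Once this identification is in place, the remaining steps are the standard BDG estimate, a H\"older inequality in time, and the interchange of the supremum over $\CP$ with the time integral, all of which are routine.
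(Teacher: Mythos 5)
Your proof is correct and follows essentially the same route as the paper: both reduce the claim, via the representation $\E=\sup_{P\in\CP}E_P$, to the classical Burkholder--Davis--Gundy inequality for the continuous local martingale $\int X\,\d B^a$ under each $P\in\CP$. The only organizational difference is that the paper, after the BDG step, returns to the sublinear expectation and invokes Lemma~\ref{lem:Q-p-bound} (applied to $X^2\in\M^{p/2}(0,T)$) for the quadratic-variation term, whereas you inline that same estimate under each fixed $P$ (using $\d\langle B^a\rangle_u\le\overline{\sigma}_{aa}^2\,\d u$, H\"older in time, and Fubini) and make explicit the simple-process approximation and the identification of the $G$-integral with the classical It\^o integral under $P$, which the paper leaves implicit.
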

    
    \section{Existence and Uniqueness Results}\label{sec:existence-uniqueness}
    
    In this section, we establish existence and uniqueness results for mean-field $G$-SDEs as introduced in Section~\ref{sec:introduction}.
    To be precise, we fix a finite time horizon $0<T<\infty$ and a dimension $d\geq1$, and consider coefficients which are defined on $[0,T]\times\Rbb^d\times\L^{2,d}\times\Omega$, where $\L^{2,d}$ denotes the space of $d$-dimensional random vectors with components in $\L^2$. 
    That is, $\L^{2,d}:=(\L^2)^d$ and $\L^{2,d}(t):=(\L^2(t))^d$ for $0\leq t\leq T$.
    
    
    We fix an initial time $0\leq t\leq T$, and consider the $G$-SDE 
    \begin{align*}
        \d X_s &=b\!\l(s,X_s, X_s\r) \d s + h\!\l(s,X_s, X_s\r) \d \l<B\r>_s + g\!\l(s,X_s, X_s\r) \d B_s, \qquad t\leq s\leq T, 
        \numberthis \label{eq:MF-GSDE}
    \end{align*}
    where $b:\,[0,T]\times\Rbb^d\times\L^{2,d}\times\Omega\rightarrow \Rbb^d$, $h:\,[0,T]\times\Rbb^d\times\L^{2,d}\times\Omega\rightarrow \Rbb^{d\times n\times n}$ and $g:\,[0,T]\times\Rbb^d\times\L^{2,d}\times\Omega\rightarrow \Rbb^{d\times n}$ are given.
    
    Let $\M^{p,d}(t,T)$ denote the space of all $d$-dimensional processes $X:\,[t,T]\times \Omega\rightarrow \Rbb^d$ such that $X=(X^1,\ldots,X^d)^T$ and $X^k\ind_{[t,T)}\in\M^p(0,T)$ for $1\leq k\leq d$.
    A solution of \eqref{eq:MF-GSDE} is a  process $X\in\M^{2,d}(t,T)$ such that its components $X^k$, $1\leq k\leq d$ satisfy
    \begin{align*}
        \d X^k_s &=b_k\!\l(s,X_s, X_s\r) \d s + \sum_{i,j=1}^n h_{kij}\!\l(s,X_s, X_s\r) \d \l<B^i,B^j\r>_s + \sum_{i=1}^n g_{ki}\!\l(s,X_s, X_s\r) \d B_s, \; t\leq s\leq T,
    \end{align*}
    where $b_k,h_{kij},g_{ki}$, $1\leq i,j\leq n$, $1\leq k\leq d$ denote the components of the coefficients.

    \begin{ass}\label{ass:1-non-lipschitz}
        The coefficients $b:\,[0,T]\times\Rbb^d\times\L^{2,d}\times\Omega\rightarrow \Rbb^d$, $h:\,[0,T]\times\Rbb^d\times\L^{2,d}\times\Omega\rightarrow \Rbb^{d\times n\times n}$, and $g:\,[0,T]\times\Rbb^d\times\L^{2,d}\times\Omega\rightarrow \Rbb^{d\times n}$ are such that the following holds for all components $f=b_k,h_{kij},g_{ki}$, $1\leq i,j\leq n$, $1\leq k\leq d$.
        \begin{enumerate}
            \item $f(\cdot,x,\xi)\ind_{[s,T]}\in\M^1(0,T)$ for all $x\in\Rbb^d$, $\xi\in\F{s}^d$ and $t\leq s\leq T$.
            \item There exist an integrable function $\kappa:\,[0,T]\rightarrow\Rbb_+$, a process $K\in\M^1(0,T)$, and continuous, increasing and concave functions $\rho_1, \rho_2:\,\Rbb_+\rightarrow\Rbb_+$ with $\rho_1(0)=\rho_2(0)=0$ and
        \begin{equation}\label{eq:ass-1-rho-integral}
            \int_{0}^1 \frac{1}{\rho_1(r)+\rho_2(r)} \d r = +\infty,
        \end{equation}
        such that
        \begin{align}
            \l| f\l(s,x,\xi,\omega\r) - f\l(s,y,\eta,\omega\r) \r|^2
            & \leq \kappa(s)\,\rho_1\!\l( \l\|x-y\r\|^2 \r)+K_s(\omega)\,\rho_2\!\l(\l\|\xi-\eta\r\|_{\L^{2}}^2\r), 
            \label{ineq:ass-1-continuity} \\
            \l| f\l(s,x,\xi,\omega\r) \r|^2 
            & \leq \kappa(s)\l\|x\r\|^2 + K_s(\omega)\l(1 + \l\|\xi\r\|_{\L^{2}}^2\r)
            \label{ineq:ass-1-growth}
        \end{align}
        for all $\omega\in\Omega$, $t\leq s\leq T$, $x,y\in\Rbb^d$, and $\xi,\eta\in\L^{2,d}(T)$.
        \end{enumerate}
    \end{ass}
    For convenience, define the constant $C:=\int_0^T \kappa(s) \d s + \|K\|_{\M^1}$.
    
    \begin{lem}\label{lem:integrable-coefficients}
        Let $X,Y\in\S(0,T)^d$.
        If Assumption~\ref{ass:1-non-lipschitz} is satisfied, then
        $f(\cdot ,X,Y)\in\M^2(0,T)$
        for all components $f=b_k,h_{kij},g_{ki}$, $1\leq i,j\leq n$, $1\leq k\leq d$.
    \end{lem}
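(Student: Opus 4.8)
The plan is to exploit that $X$ and $Y$ are \emph{simple} processes, so after passing to a common refinement of their partitions we may write $X_s=\sum_{k=0}^{m-1}\xi_k\ind_{[t_k,t_{k+1})}(s)$ and $Y_s=\sum_{k=0}^{m-1}\eta_k\ind_{[t_k,t_{k+1})}(s)$ with $\xi_k,\eta_k\in\F{t_k}^d$ bounded. Then $f(\cdot,X,Y)=\sum_{k=0}^{m-1} f(\cdot,\xi_k,\eta_k)\ind_{[t_k,t_{k+1})}$, and since $\M^2(0,T)$ is a Banach space it suffices to show that each summand $W^{(k)}:=f(\cdot,\xi_k,\eta_k)\ind_{[t_k,t_{k+1})}$ lies in $\M^2(0,T)$. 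I would fix one such $k$.

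The core building block is the deterministic-point case: for fixed $x\in\Rbb^d$ I claim $V^x:=f(\cdot,x,\eta_k)\ind_{[t_k,t_{k+1})}\in\M^2(0,T)$. Writing $\ind_{[t_k,t_{k+1})}=\ind_{[t_k,T]}-\ind_{[t_{k+1},T]}$ and invoking Assumption~\ref{ass:1-non-lipschitz}(1) (with the random-variable argument $\eta_k\in\F{t_k}^d$) gives $V^x\in\M^1(0,T)$. The growth bound \eqref{ineq:ass-1-growth} yields the pointwise domination $|V^x_s(\omega)|^2\le G_s(\omega):=\kappa(s)\|x\|^2+(1+\|\eta_k\|_{\L^2}^2)K_s(\omega)$, where $G\in\M^1(0,T)$ since $\kappa\in L^1[0,T]$ and $K\in\M^1(0,T)$; in particular $\|V^x\|_{\M^2}<\infty$. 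It remains to upgrade $\M^1$-membership to $\M^2$-membership, which I would do by truncation. The truncation $V^{x,L}:=(V^x\wedge L)\vee(-L)$ is bounded and, as truncation is $1$-Lipschitz and fixes $0$, still lies in $\M^1(0,T)$; moreover any bounded element of $\M^1(0,T)$ lies in $\M^2(0,T)$, because truncating an $\M^1$-approximating sequence of step processes at level $L$ produces step processes that converge in $\M^2$ (using $\E[|\,\cdot\,|^2]\le 2L\,\E[|\,\cdot\,|]$ on $\{|\,\cdot\,|\le 2L\}$). Finally $V^{x,L}\to V^x$ in $\M^2(0,T)$ follows from $((|V^x_s|-L)^+)^2\le G_s\ind_{\{G_s\ge L^2\}}$ together with the uniform integrability of $\M^1$-elements, $\lim_{L\to\infty}\int_0^T\E[G_s\ind_{\{G_s\ge L\}}]\d s=0$, itself a short consequence of Markov's inequality for $\E$ and approximation of $G$ by bounded step processes. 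Hence $V^x\in\M^2(0,T)$.

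To pass from deterministic $x$ to the random value $\xi_k$, I would approximate $\xi_k$ uniformly by simple $\CF_{t_k}$-measurable functions $\xi_k^N=\sum_j x_j^N\ind_{A_j^N}$, bounded by the same constant as $\xi_k$. Multiplication by a bounded $\CF_{t_k}$-measurable indicator $\ind_{A_j^N}$ maps $\M^2$-processes supported on $[t_k,T)$ into $\M^2(0,T)$ — seen by multiplying an approximating sequence of step processes and using $A_j^N\in\CF_{t_k}\subseteq\CF_s$ for $s\ge t_k$ — so that $W^N:=f(\cdot,\xi_k^N,\eta_k)\ind_{[t_k,t_{k+1})}=\sum_j\ind_{A_j^N}V^{x_j^N}\in\M^2(0,T)$. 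The continuity bound \eqref{ineq:ass-1-continuity}, with the mean-field argument held fixed so that its $\rho_2$-term vanishes ($\rho_2(0)=0$), gives $|W^N_s-W^{(k)}_s|^2\le\kappa(s)\,\rho_1(\|\xi_k^N-\xi_k\|^2)\ind_{[t_k,t_{k+1})}(s)$; since $\rho_1$ is continuous with $\rho_1(0)=0$ and $\xi_k^N\to\xi_k$ uniformly, $\|W^N-W^{(k)}\|_{\M^2}^2\le\rho_1(\|\xi_k^N-\xi_k\|_\infty^2)\int_{t_k}^{t_{k+1}}\kappa(s)\d s\to 0$. As $\M^2(0,T)$ is closed this gives $W^{(k)}\in\M^2(0,T)$, and summing over $k$ completes the proof.

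The main obstacle, I expect, is precisely the bridge from $\M^1$ to $\M^2$: Assumption~\ref{ass:1-non-lipschitz}(1) only provides the time-regularity of $f$ at the level of $\M^1$, whereas the conclusion and the growth bound \eqref{ineq:ass-1-growth} are quadratic, and finiteness of the $\M^2$-norm alone does not guarantee membership in the completion. The truncation argument combined with the uniform integrability of $\M^1(0,T)$-elements — dominating $|V^x|^2$ by the $\M^1$-process $G$ — is what closes this gap; the remaining steps are routine bookkeeping with the common partition and the continuity of the coefficients.
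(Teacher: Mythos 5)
Your proof is correct, and its skeleton is the same as the paper's: write $f(\cdot,X,Y)=\sum_{k} f(\cdot,\xi_k,\eta_k)\ind_{[t_k,t_{k+1})}$ over a common refinement of the two partitions, settle the case of a deterministic spatial argument $x$, pass to the bounded random argument $\xi_k$ via an approximation that exploits the continuity bound \eqref{ineq:ass-1-continuity} with the $\rho_2$-term switched off, and sum over $k$. The two middle steps differ, one cosmetically and one substantively. For the passage from $x$ to $\xi_k$, the paper invokes Lemma~\ref{lem:A3}: cover the (compact closure of the) range of $\xi_k$ by finitely many $\epsilon$-balls, take a subordinate partition of unity $\{\psi^\epsilon_y\}$, and approximate by $\sum_y f(\cdot,y,\eta_k)\,\psi^\epsilon_y(\xi_k)\ind_{[t_k,T]}$, each summand lying in $\M^2(0,T)$ by Corollary~\ref{cor:A2}; your simple-function approximation $\xi^N_k=\sum_j x^N_j\ind_{A^N_j}$ is the indicator version of exactly the same device (and your multiplication step re-proves Corollary~\ref{cor:A2}), so this difference is immaterial. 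The substantive difference is the deterministic-$x$ step. The paper simply asserts $f(\cdot,x,\eta_k)\ind_{[t_k,t_{k+1})}\in\M^2(0,T)$ ``by Assumption~\ref{ass:1-non-lipschitz}'', although part~1 of that assumption literally provides only $\M^1(0,T)$-membership, and finiteness of the $\M^2$-norm (which the growth bound \eqref{ineq:ass-1-growth} gives) does not by itself place an element in the completion $\M^2(0,T)$. Your truncation argument --- bounded elements of $\M^1(0,T)$ lie in $\M^2(0,T)$ via $\E[|\cdot|^2]\le 2L\,\E[|\cdot|]$, combined with the pointwise domination $|f(\cdot,x,\eta_k)|^2\le\kappa\,\|x\|^2+K\,(1+\|\eta_k\|_{\L^{2}}^2)\in\M^1(0,T)$ and the uniform integrability of $\M^1$-elements --- supplies exactly this missing bridge, and all three ingredients check out. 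So on this point your write-up is more complete than the paper's own proof: it derives from the stated hypotheses the $\M^2$-membership that the paper either takes for granted or tacitly builds into its reading of Assumption~\ref{ass:1-non-lipschitz}.
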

    \begin{proof}
        Since $X,Y\in\S(0,T)$, there exist $N\in\Nbb$, $0=t_0<\ldots<t_N=T$ and $\xi_k,\eta_k\in\F{t_k}$ for $0\leq k\leq N-1$ such that
        \begin{equation*}
            X=\sum_{k=0}^{N-1}\xi_k\ind_{[t_k,t_{k+1})}, \qquad \qquad Y=\sum_{k=0}^{N-1}\eta_k\ind_{[t_k,t_{k+1})}.
        \end{equation*}
        Then we have
        \begin{equation*}
            f(\cdot,X,Y)=\sum_{k=0}^{N-1}f(\cdot,\xi_k,\eta_k)\ind_{[t_k,t_{k+1})}.
        \end{equation*}
        For $0\leq k\leq N-1$, define $f_k:=f(\cdot,\eta_k)\ind_{[t_k,t_{k+1})}:\,[0,T]\times \Rbb\times \Omega\rightarrow \Rbb$.
        By Assumption~\ref{ass:1-non-lipschitz}, $f_k(\cdot,x)=f(\cdot,x,\eta_k)\ind_{[t_k,t_{k+1})}\in\M^2(0,T)$ for all $x\in\Rbb$.
        The continuity assumption \eqref{ineq:ass-1-continuity} implies \eqref{ineq:A3-0} with $\rho(r):=\rho_1(r^2)$.
        Thus, we can apply Lemma~\ref{lem:A3} and obtain that $f_k(\cdot,\xi_k)\ind_{[t_k,T]}=f(\cdot,\xi_k,\eta_k)\ind_{[t_k,t_{k+1}]}\in\M^2(0,T)$ since $\xi_k\in\F{t_k}$ is $\CF_t$-measurable and bounded.
        Thus, $f(\cdot,X,Y)\in\M^2(0,T)$ as finite sum of elements in $\M^2(0,T)$.
    \end{proof}
    
    \begin{cor}\label{cor:integrable-coefficients}
        Let $X,Y\in\M^{2,d}(0,T)$ be such that
        \begin{equation}
            \sup_{0\leq s\leq T} \E\!\l[\l|X_s\r|^ 2\r] + \sup_{0\leq s\leq T}\E\!\l[\l|Y_s\r|^ 2\r] < \infty.
            \label{ineq:supE}
        \end{equation}
        If Assumption~\ref{ass:1-non-lipschitz} is satisfied, then
        $f(\cdot ,X,Y)\in\M^2(0,T)$
        for all components $f=b_k,h_{kij},g_{ki}$, $1\leq i,j\leq n$, $1\leq k\leq d$.
    \end{cor}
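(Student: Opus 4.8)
The plan is to derive the corollary from Lemma~\ref{lem:integrable-coefficients} by approximation together with the completeness of $\M^2(0,T)$; the extra hypothesis \eqref{ineq:supE} plays the role that the simple-process assumption played in the lemma, in that it supplies the moment control needed to pass to the limit. Since $X,Y\in\M^{2,d}(0,T)$, each component lies in the completion of $\S(0,T)$, so there are simple processes $X^{(m)},Y^{(m)}\in\S(0,T)^d$ with $\l\|X^{(m)}-X\r\|_{\M^2}\to 0$ and $\l\|Y^{(m)}-Y\r\|_{\M^2}\to 0$. I would choose these approximants so that they additionally satisfy a uniform bound $\sup_m\sup_{0\leq s\leq T}\E\!\l[\l\|X^{(m)}_s\r\|^2\r]+\sup_m\sup_{0\leq s\leq T}\E\!\l[\l\|Y^{(m)}_s\r\|^2\r]<\infty$; the natural tool here is an averaging/conditional-expectation approximation, which is an $\L^2$-contraction, so that by \eqref{ineq:supE} the approximants inherit the sup-moment bound of $X$ and $Y$. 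By Lemma~\ref{lem:integrable-coefficients}, $f(\cdot,X^{(m)},Y^{(m)})\in\M^2(0,T)$ for every $m$.

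Next I would estimate $\l\|f(\cdot,X,Y)-f(\cdot,X^{(m)},Y^{(m)})\r\|_{\M^2}$. Inserting the pointwise (in $\omega$) values into the continuity condition \eqref{ineq:ass-1-continuity}, applying the sublinear expectation $\E$ (which is monotone and subadditive, and under which the deterministic factors $\kappa(s)$ and $\rho_2(\cdot)$ may be pulled out), and invoking Jensen's inequality for the sublinear expectation and the concave increasing functions $\rho_1,\rho_2$ — that is $\E[\rho_i(\zeta)]\leq\rho_i(\E[\zeta])$, which follows from classical Jensen under each $P\in\CP$ together with the monotonicity of $\rho_i$ — I obtain, after integrating in $s$,
\begin{equation*}
    \l\|f(\cdot,X,Y)-f(\cdot,X^{(m)},Y^{(m)})\r\|_{\M^2}^2\leq \int_0^T\kappa(s)\,\rho_1\!\l(g_m(s)\r)\d s+\int_0^T\E\!\l[K_s\r]\,\rho_2\!\l(h_m(s)\r)\d s,
\end{equation*}
where $g_m(s):=\E[\l\|X_s-X^{(m)}_s\r\|^2]$ and $h_m(s):=\l\|Y_s-Y^{(m)}_s\r\|_{\L^2}^2$. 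Here $\int_0^T g_m\d s\to 0$ and $\int_0^T h_m\d s\to 0$, both families are uniformly bounded by the previous step, and $\kappa,\E[K_\cdot]\in L^1([0,T])$, the latter because $\int_0^T\E[K_s]\d s\leq\|K\|_{\M^1}$.

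The heart of the argument is to show both integrals vanish as $m\to\infty$. For the first I would fix $\delta>0$ and split $[0,T]$ into $\{g_m\leq\delta\}$ and $\{g_m>\delta\}$. On the former, monotonicity of $\rho_1$ gives $\rho_1(g_m)\leq\rho_1(\delta)$, contributing at most $\rho_1(\delta)\,\|\kappa\|_{L^1}$. On the latter, the uniform bound $g_m\leq M$ gives $\rho_1(g_m)\leq\rho_1(M)$, while Markov's inequality yields $\l|\{g_m>\delta\}\r|\leq\delta^{-1}\int_0^T g_m\d s\to 0$, so the absolute continuity of the integral of $\kappa\in L^1$ forces $\int_{\{g_m>\delta\}}\kappa\d s\to 0$. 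Letting $m\to\infty$ and then $\delta\to 0$ (using $\rho_1(\delta)\to 0$) shows the first integral tends to $0$; the same argument with $\E[K_\cdot]$, $\rho_2$ and $h_m$ in place of $\kappa$, $\rho_1$ and $g_m$ handles the second. Thus $f(\cdot,X^{(m)},Y^{(m)})\to f(\cdot,X,Y)$ in $\M^2$, and since $\M^2(0,T)$ is complete and the sequence lies in it, the limit $f(\cdot,X,Y)$ belongs to $\M^2(0,T)$.

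I expect the main obstacle to be securing the uniform moment bound in the first step and correctly exploiting it in the third: the convergence genuinely requires the families $\{g_m\}$ and $\{h_m\}$ to be uniformly bounded, since for a merely integrable weight $\kappa$ one cannot deduce $\int_0^T\kappa\,\rho_1(g_m)\d s\to 0$ from $\int_0^T g_m\d s\to 0$ alone (already for a linear $\rho_1$ and a singular $\kappa$ this fails). This is exactly where \eqref{ineq:supE} is indispensable, and constructing an $\M^2$-convergent simple-process approximation that simultaneously keeps $\sup_{m,s}\E[\l\|\cdot\r\|^2]$ bounded is the technical crux.
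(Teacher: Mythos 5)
Your proposal is correct and shares the paper's overall skeleton---approximate $X,Y$ by simple processes, invoke Lemma~\ref{lem:integrable-coefficients}, estimate the $\M^2$-distance via \eqref{ineq:ass-1-continuity} together with Jensen's inequality for the sublinear expectation, and conclude by completeness of $\M^2(0,T)$---but it diverges at the decisive step of passing to the limit. The paper asserts that \eqref{ineq:supE} yields simple approximants converging \emph{uniformly in time}, i.e. $\sup_{0\leq s\leq T}\E[|X^m_s-X_s|^2]\to 0$, after which the limit is immediate: the moduli $\rho_1,\rho_2$ are evaluated at this uniform error and pulled outside the time integral, leaving the constant $C$. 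That assertion is strictly stronger than $\M^2$-convergence and is given no justification in the paper (and it is delicate: a process that is piecewise constant in time cannot be uniformly close in $\L^2$ to, say, $c\,\ind_A(s)$ with $A$ a fat Cantor set, even though such a process lies in $\M^2(0,T)$ and satisfies \eqref{ineq:supE}). You instead demand less of the approximants---$\M^2$-convergence plus a uniform bound on $\sup_s\E[|X^{(m)}_s|^2]$, which you propose to secure by an averaging construction (a sup-moment contraction thanks to Jensen and the Fubini-type inequality $\E[\int \cdot\, \d u]\leq\int\E[\cdot]\,\d u$), followed if needed by truncation of the coefficients to land in $\S(0,T)$---and you compensate with the Vitali-type splitting over $\{g_m\leq\delta\}$ and $\{g_m>\delta\}$, using Markov's inequality and absolute continuity of the integrals of $\kappa$ and $\E[K_\cdot]$. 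The cost is the construction step, which you only sketch and correctly identify as the technical crux (it does go through, but one must check that the averaged coefficients lie in $\L^2(t_k)$, then truncate and approximate by elements of $\F{t_k}$ without increasing second moments); the gain is that your argument rests only on properties that \eqref{ineq:supE} genuinely delivers, so your route is in fact more robust than the paper's, whose unproved uniform-approximation claim is the one real soft spot in its proof.
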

    \begin{proof}
        Since $X,Y\in\M^2(0,T)$ satisfy \eqref{ineq:supE}, there exist sequences $(X^m)_{m\in\Nbb},(Y^m)_{m\in\Nbb}$ in $\S(0,T)$ such that
        \begin{equation*}
            \lim_{m\rightarrow \infty} \sup_{0\leq s\leq T} \E\!\l[ \l|X^ m_s-X_s\r|^ 2\r]=0, \qquad \lim_{m\rightarrow \infty}\sup_{0\leq s\leq T}\E\!\l[ \l|Y^ m_s-Y_s\r|^ 2\r] =0
            .\numberthis\label{eq:lim-supE}
        \end{equation*}
        Lemma~\ref{lem:integrable-coefficients}  implies $f(\cdot,X^m,Y^m)\in\M^2(0,T)$ for all $m\in\Nbb$.
        By the continuity assumption \eqref{ineq:ass-1-continuity}, we have
        \begin{flalign*}
            \quad
            \int_0^T & \E\!\l[ \l|f(s,X^m_s,Y^ m_s)-f(s,X_s,Y_s)\r|^2 \r] \d s
            &&\\
            &\leq 
            \int_0^T \E\!\l[ \kappa(s)\rho_1\!\l(\l|X^m_s-X_s \r|^2\r) +K_s\,\rho_2\!\l(\l\|Y^m_s-Y_s\r\|_{\L^{2}}^2\r) \r] \d s
            \\&\leq 
            \int_0^T \kappa(s)\rho_1\!\l(\E\!\l[\l|X^m_s-X_s \r|^2\r]\r) +\E\!\l[K_s\r]\rho_2\!\l(\l\|Y^m_s-Y_s\r\|_{\L^{2}}^2\r) \d s
            \\&\leq 
             \int_0^T \kappa(s)\d s\,\rho_1\!\l( \sup_{0\leq w\leq T} \E\!\l[\l|X^m_w-X_w \r|^2\r]\r)
             +\int_0^T \E\!\l[ K_s\r] \d s\, \rho_2\!\l( \sup_{0\leq w\leq T} \E\!\l[ \l|Y^ m_w-Y_w\r|^ 2\r] \r)
             \\&\leq 
             C\,\rho_1\!\l( \sup_{0\leq w\leq T} \E\!\l[\l|X^m_w-X_w \r|^2\r]\r)
             +C\, \rho_2\!\l( \sup_{0\leq w\leq T} \E\!\l[ \l|Y^ m_w-Y_w\r|^ 2\r] \r)
             ,
        \end{flalign*}
        which tends to $0$ as $m\rightarrow\infty$ due to \eqref{eq:lim-supE}.        
        Thus, $f(\cdot,X,Y)\in\M^2(0,T)$ since $\M^2(0,T)$ is complete.
    \end{proof}
    
    For $\xi\in\L^{2,d}(t)$ and $X,Y\in\M^{2,d}(t,T)$, consider the process $\Phi^{t,\xi}(X,Y)$ given by
    \begin{align*}
        \Phi^{t,\xi}(X,Y)_s
        &:=\xi + \int_t^s b\!\l(u,X_u, Y_u\r) \d u + \sum_{i,j=1}^n \int_t^s h_{ij}\!\l(u,X_u, Y_u\r) \d \l<B^i,B^j\r>_u \\
        &\quad + \sum_{i=1}^n \int_t^s g_{i}\!\l(u,X_u, Y_u\r) \d B^i_u,
        \qquad t\leq s\leq T,
        \numberthis\label{eq:Phi}
    \end{align*}
    if it is well defined.

    \begin{lem}\label{lem:Phi-growth}
        Let $X,Y\in\M^{2,d}(t,T)$ be such that $\Phi(X,Y)\in\M^{1,d}(t,T)$.
        If Assumption~\ref{ass:1-non-lipschitz} is satisfied, then there exists a constant $\CK>0$ such that
        \begin{equation*}
            \E\!\l[ \sup_{t\leq w \leq s}\l\|\Phi^{t,\xi}(X,Y)\r\|^2 \r]
            \leq \CK \l( \l\|\xi\r\|_{\L^{2}}^2 + \E\!\l[ \int_t^s \kappa(u) \l|X_u\r|^2 + K_u\l(1 + \l\|Y_u\r\|_{\L^{2}}^2\r) \d u \r] \r)
        \end{equation*}
        for all $t\leq s\leq T$ and $\xi\in\L^{2,d}(t)$.
        Moreover, $\CK$ is independent of $X,Y$.
    \end{lem}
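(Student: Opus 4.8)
The plan is to estimate the four building blocks of $\Phi^{t,\xi}(X,Y)$ in \eqref{eq:Phi} separately and then recombine. First I would pass to components: since $\|v\|^2=\sum_{k=1}^d|v^k|^2$, it suffices to bound $\E[\sup_{t\le w\le s}|\Phi^{t,\xi}(X,Y)^k_w|^2]$ for each $k$ and sum. Written out, the $k$-th component is a sum of $m:=2+n^2+n$ terms, namely the initial value $\xi^k$, the drift integral, the $n^2$ quadratic-(co)variation integrals against $\d\langle B^i,B^j\rangle$, and the $n$ Itô integrals against $\d B^i$. Hence the elementary inequality $(\sum_{l=1}^m a_l)^2\le m\sum_{l=1}^m a_l^2$ bounds $\sup_w|\Phi^k_w|^2$ by $m$ times the sum of the suprema of the squared individual terms, and $\E[\,|\xi^k|^2\,]=\|\xi^k\|_{\L^2}^2$ already accounts for the initial contribution.

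Next I would estimate the three integral types, aiming in each case at a bound of the form (uniform constant)$\cdot\E[\int_t^s|f(u,X_u,Y_u)|^2\,\d u]$ for the corresponding component $f=b_k,h_{kij},g_{ki}$. For the drift, a pathwise Cauchy–Schwarz inequality in time gives $\sup_{t\le w\le s}|\int_t^w b_k(u,X_u,Y_u)\,\d u|^2\le (s-t)\int_t^s|b_k(u,X_u,Y_u)|^2\,\d u$, and applying $\E$ keeps the time integral inside the expectation. For the quadratic-covariation terms the relevant estimate is of Burkholder type as in Lemma~\ref{lem:Q-p-bound} with $p=2$ (via the polarization definition of $\langle B^i,B^j\rangle$), and for the Itô terms it is the Burkholder–Davis–Gundy estimate of Lemma~\ref{lem:I-p-bound} with $p=2$. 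The constants here depend only on $n$, on the bounds $\overline{\sigma}$ induced by the fixed $\Sigma$, on the Burkholder–Davis–Gundy constant and on $(s-t)\le T$, never on $X,Y,\xi$.

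Having reduced everything to $\|\xi\|_{\L^2}^2$ plus terms $\E[\int_t^s|f(u,X_u,Y_u)|^2\,\d u]$, I would apply the growth condition \eqref{ineq:ass-1-growth} pathwise inside the integral: evaluated at $x=X_u(\omega)$ and $\xi=Y_u$ it reads $|f(u,X_u,Y_u)|^2\le \kappa(u)|X_u|^2+K_u(1+\|Y_u\|_{\L^2}^2)$. By monotonicity and subadditivity of $\E$, and using that $\kappa(u)$ and $\|Y_u\|_{\L^2}^2$ are deterministic, the finitely many contributions are all dominated by the single term $\E[\int_t^s \kappa(u)|X_u|^2+K_u(1+\|Y_u\|_{\L^2}^2)\,\d u]$. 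Collecting the factor $m$, the number $d$ of components and the constants above into one $\CK$, manifestly independent of $X,Y,\xi$ and of $s\in[t,T]$, finishes the argument; well-definedness of all integrals throughout is guaranteed by the hypothesis $\Phi(X,Y)\in\M^{1,d}(t,T)$.

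The main obstacle is producing the bound in exactly the stated form, with the time integral \emph{inside} the outer expectation. The ready-made estimates of Lemma~\ref{lem:Q-p-bound} and Lemma~\ref{lem:I-p-bound} yield the a priori weaker right-hand side $\int_t^s\E[|f(u,X_u,Y_u)|^2]\,\d u$, and since only $\E[\int\,\cdot\,]\le\int\E[\,\cdot\,]$ holds in the sublinear setting, these cannot by themselves deliver $\E[\int\,\cdot\,]$. The clean remedy is to carry out the covariation and Itô estimates \emph{under each fixed $P\in\CP$}, where the $G$-integrals coincide with the classical Itô integrals, $\langle B^a\rangle$ with the classical quadratic variation whose density is bounded by $\overline{\sigma}_{aa}^2$ $P$-a.s., and classical Cauchy–Schwarz and Burkholder–Davis–Gundy apply with constants uniform over $\CP$ because $\Sigma$ is fixed; taking $\sup_{P\in\CP}$ only at the very end, together with subadditivity of the supremum, places every contribution under one outer expectation of a time integral, as claimed.
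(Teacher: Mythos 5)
Your proposal is correct and is essentially the paper's own argument: split $\Phi^{t,\xi}(X,Y)$ into its $2+n+n^2$ building blocks via the elementary quadratic (Jensen) inequality, treat the drift by Cauchy--Schwarz in time and the covariation and It\^o terms by Lemmas~\ref{lem:Q-p-bound} and~\ref{lem:I-p-bound} with $p=2$, apply the growth condition \eqref{ineq:ass-1-growth} pathwise inside the time integral, and collect the constants into $\CK$ as in \eqref{eq:K}. Your closing observation is if anything more careful than the paper's write-up, which cites those two lemmas but in \eqref{ineq:sup-h} and \eqref{ineq:sup-g} actually uses the stronger intermediate bound $\E\l[\int_t^s\l|\cdot\r|^2\d u\r]$ (time integral inside the expectation) that is established only within their proofs, via exactly the fixed-$P$ upper-expectation argument you spell out.
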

    \begin{proof}
        By Jensen's inequality, we have
        \begin{flalign*}
            \E\!\l[ \sup_{t\leq w\leq s}\l| \Phi^{t,\xi}(X,Y)_w\r|^2 \r]
            &\leq
            \l(2+n+n^2\r)\l( \E\!\l[ \l|\xi\r|^2\r] + \E\!\l[ \sup_{t\leq w\leq s}\l|\int_t^w b\!\l(u,X_u, Y_u\r) \d u \r|^2 \r] 
            \r.\\&\qquad \l.
            + \sum_{i,j=1}^n \E\!\l[ \sup_{t\leq w\leq s} \l| \int_t^w h_{ij}\!\l(u,X_u, Y_u\r) \d \l<B^i,B^j\r>_u\r|^2 \r] 
            \r. \\&\qquad \l.
            + \sum_{i=1}^n \E\!\l[ \sup_{t\leq w\leq s} \l| \int_t^w g_{i}\!\l(u,X_u, Y_u\r) \d B^i_u\r|^2 \r] \r)
            .\numberthis \label{ineq:sup}
        \end{flalign*}
        By the growth assumption \eqref{ineq:ass-1-growth}, we have for the integral with respect to $u$ that
        \begin{align*}
            \E\!\l[ \sup_{t\leq w\leq s}\l|\int_t^w b\!\l(u,X_u, Y_u\r) \d u \r|^2 \r]
            &\leq 
            \l(s-t\r) \E\!\l[\int_t^s\l| b\!\l(u,X_u, Y_u\r)\r|^2 \d u  \r]
            \\&\leq 
            \l(s-t\r) \E\!\l[ \int_t^s \kappa(u)\l|X_u\r|^2 + K_u\l(1 + \l\|Y_u\r\|_{\L^{2}}^2\r) \d u \r]
            .\numberthis \label{ineq:sup-b}
        \end{align*}
        Since $\Phi(X,Y)\in\M^{1}(t,T)$, we know that $h_{ij}(\cdot,X,Y)\in\M^1(t,T)$ and $g_i(\cdot,X,Y)\in\M^2(t,T)$ for $1\leq i,j\leq n$, and we can apply Lemmas~\ref{lem:Q-p-bound}~and~\ref{lem:I-p-bound}.
        Thus, the growth condition \eqref{ineq:ass-1-growth} yields for the integral with respect to $\l<B^i,B^j\r>$ with $1\leq i,j\leq n$ that
        \begin{flalign*}
            \E\!\l[ \sup_{t\leq w\leq s} \l| \int_t^w h_{ij}\!\l(u,X_u, Y_u\r) \d \l<B^i,B^j\r>_u\r|^2 \r]
            &\leq 
            \overline{\sigma}_{ij}^4\l(s-t\r) \E\!\l[ \int_t^s  \l| h_{ij}\!\l(u,X_u, Y_u\r)\r|^2  \d u \r]
            \\&\leq 
            \overline{\sigma}_{ij}^4\l(s-t\r) \E\!\l[ \int_t^s \kappa(u)\l|X_u\r|^2 + K_u\l(1 + \l\|Y_u\r\|_{\L^{2}}^2\r) \d u \r]
            ,\numberthis \label{ineq:sup-h}
        \end{flalign*}
        and for the integral with respect to $B^i$ with $1\leq i\leq n$ that
        \begin{flalign*}
            \E\!\l[ \sup_{t\leq w\leq s} \l| \int_t^w g_{i}\!\l(u,X_u, X_u\r) \d B^i_u\r|^2 \r]
            &\leq
            C_2\,\overline{\sigma}_{ii}^2
             \E\!\l[ \int_t^s \l| g_{i}\!\l(u,X_u, Y_u\r)\r|^2 \d u \r] 
            \\&\leq
            C_2\,\overline{\sigma}_{ii}^2 \E\!\l[ \int_t^s \kappa(u)\l|X_u\r|^2 + K_u\l(1 + \l\|Y_u\r\|_{\L^{2}}^2\r) \d u \r]
            .\numberthis\label{ineq:sup-g}
        \end{flalign*}
        Combining \eqref{ineq:sup}, \eqref{ineq:sup-b}, \eqref{ineq:sup-h} and \eqref{ineq:sup-g}, we obtain
        \begin{flalign*}
            \E\!\l[ \sup_{t\leq w\leq s}\l| \Phi^{t,\xi}(X,Y)_w\r|^2 \r] 
            &\leq 
            \CK \l( \l\|\xi\r\|_{\L^{2}}^2 + \E\!\l[ \int_t^s \kappa(u)\l|X_u\r|^2 + K_u\l(1 + \l\|Y_u\r\|_{\L^{2}}^2\r) \d u \r] \r)
            ,
        \end{flalign*}
        where the constant $\CK$ is given by
        \begin{equation*}
            \CK:=\l(2+n+n^2\r)\l( 1+ T + \sum_{i,j=1}^n \overline{\sigma}_{ij}^4 T + C_2 \sum_{i=1}^n\overline{\sigma}_{ii}^2\r)
            ,\numberthis\label{eq:K}
        \end{equation*}
        which is independent of $X,Y$.
    \end{proof}

    For $0\leq s\leq T$ and $d\geq 1$, let us introduce the space
    \begin{equation*}
        \H^{2,d}(t,T):=\l\{ X\in\M^{2,d}(s,T)\,:\, \E\!\l[\sup_{w\leq w\leq T}\l\|X_w\r\|^ 2\r] < \infty \r\}.
    \end{equation*}
    Clearly, $\H^2(0,T):=\H^{2,1}(0,T)\subseteq \M^2(0,T)$ is the completion of $\S(0,T)$ with respect to the norm
    \begin{equation*}
        \l\|\cdot\r\|_{\H^{2}}:\;\S(0,T)\rightarrow\Rbb_+,\qquad \l\|X \r\|_{\H^{2}}:=\E\!\l[\sup_{0\leq s\leq T}\l|X_s\r|^ 2\r]^{\frac{1}{2}}.
    \end{equation*}
    
    \begin{cor}
        Let $\xi\in\L^{2,d}(t)$ and $X,Y\in\H^{2,d}(t,T)$.
        If Assumption~\ref{ass:1-non-lipschitz} is satisfied, then $\Phi^{t,\xi}(X,Y)\in\M^{1,d}(t,T)$.
    \end{cor}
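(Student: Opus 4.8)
The plan is to use that $\M^{1,d}(t,T)$ is a vector space and to show separately that each of the four summands defining $\Phi^{t,\xi}(X,Y)$ in \eqref{eq:Phi} lies in $\M^{1,d}(t,T)$; the claim then follows by taking the finite sum. The initial term is the process $s\mapsto\xi$ on $[t,T]$, i.e. $\xi\ind_{[t,T)}$. Writing it via the partition $0<t<T$ with a vanishing first block (or directly if $t=0$), Lemma~\ref{lem:Lp-sum-Mp} applied componentwise gives $\xi\ind_{[t,T)}\in\M^{2,d}(0,T)\subseteq\M^{1,d}(t,T)$.

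The central preliminary step is to verify that the integrands obtained by plugging $(X,Y)$ into the coefficients land in the right spaces. Since $X,Y\in\H^{2,d}(t,T)$, their zero-extensions $X\ind_{[t,T)},Y\ind_{[t,T)}\in\M^{2,d}(0,T)$ satisfy $\sup_{0\leq s\leq T}\E[\l|X_s\r|^2]<\infty$ and likewise for $Y$, which is precisely condition \eqref{ineq:supE}. Hence Corollary~\ref{cor:integrable-coefficients} applies and yields $f(\cdot,X,Y)\ind_{[t,T)}\in\M^2(0,T)$ for every component $f=b_k,h_{kij},g_{ki}$, $1\leq i,j\leq n$, $1\leq k\leq d$. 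The delicate point is the bookkeeping of the restriction to $[t,T]$: the bounds \eqref{ineq:ass-1-continuity} and \eqref{ineq:ass-1-growth} in Assumption~\ref{ass:1-non-lipschitz} are only posited for $t\leq s\leq T$, so one must carry the factor $\ind_{[t,T)}$ throughout and apply the corollary to the restricted integrands.

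With the integrands in hand, each summand is handled by the corresponding stability lemma, using that $\ind_{[t,s)}=\ind_{[t,T)}\ind_{[0,s)}$ so that the integrals from $t$ coincide with those from $0$ of the $\ind_{[t,T)}$-restricted integrands. For $s\mapsto\int_t^s b_k(u,X_u,Y_u)\,\d u$, since $b_k(\cdot,X,Y)\ind_{[t,T)}\in\M^1(0,T)$, Lemma~\ref{lem:int:M1-M1-map} gives membership in $\M^1(0,T)$. For each co-variation integral $s\mapsto\int_t^s h_{kij}(u,X_u,Y_u)\,\d\l<B^i,B^j\r>_u$, I would use the definition $\int X\,\d\l<B^a,B^b\r>=\tfrac14\l(\CQ_{a+b}(X)-\CQ_{a-b}(X)\r)$ with $a=e_i$, $b=e_j$ and apply Lemma~\ref{lem:Q:M1-M1-map} to the two terms $\CQ_{e_i+e_j}$ and $\CQ_{e_i-e_j}$, concluding by linearity that the process is in $\M^1(0,T)$. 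For each It\^o integral $s\mapsto\int_t^s g_{ki}(u,X_u,Y_u)\,\d B^i_u$, since $g_{ki}(\cdot,X,Y)\ind_{[t,T)}\in\M^2(0,T)$, Lemma~\ref{lem:I:M2-M2-map} gives membership in $\M^2(0,T)\subseteq\M^1(0,T)$.

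Collecting the four contributions, each component of $\Phi^{t,\xi}(X,Y)$ restricted to $[t,T)$ is a finite sum of elements of $\M^1(0,T)$, hence lies in $\M^1(0,T)$; equivalently $\Phi^{t,\xi}(X,Y)\in\M^{1,d}(t,T)$, which in particular shows that the process in \eqref{eq:Phi} is well defined. I expect the only genuine difficulty to be the careful application of Corollary~\ref{cor:integrable-coefficients} under the $[t,T]$-restriction; once the integrands are known to be in $\M^2(0,T)$ (resp. $\M^1(0,T)$), the integral-stability steps are direct invocations of Lemmas~\ref{lem:int:M1-M1-map}, \ref{lem:Q:M1-M1-map} and \ref{lem:I:M2-M2-map}.
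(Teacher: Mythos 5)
Your proof is correct and follows essentially the same route as the paper: apply Corollary~\ref{cor:integrable-coefficients} to the zero-extended processes to place the coefficient components in $\M^2(0,T)$, then invoke Lemmas~\ref{lem:int:M1-M1-map}, \ref{lem:Q:M1-M1-map} and \ref{lem:I:M2-M2-map} for the three integral terms and conclude by summing. Your additional bookkeeping --- the explicit treatment of the initial term $\xi\ind_{[t,T)}$ via Lemma~\ref{lem:Lp-sum-Mp}, the decomposition of the covariation integral into $\tfrac14\l(\CQ_{e_i+e_j}-\CQ_{e_i-e_j}\r)$, and carrying the restriction $\ind_{[t,T)}$ through the application of the corollary --- only makes explicit what the paper's shorter proof leaves implicit.
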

    \begin{proof}
        Corollary~\ref{cor:integrable-coefficients} implies that $Z:=f\!\l(\cdot,X\ind_{[t,T]},Y\ind_{[t,T]}\r)\in\M^2(0,T)$ for all components $f=b, h_{ij}, g_i$, $1\leq i,j\leq n$.
        In particular, $Z\ind_{[t,s]}=f\!\l(\cdot,X,Y\r)\ind_{[t,s]}\in\M^2(0,T)$ for all $t\leq s\leq T$. 
        Hence, $f(\cdot,X,Y)\in\M^2(t,T)$ and all integrals in \eqref{eq:Phi} are well defined.
        Thus, we have $\Phi^{t,\xi}(X,Y)\in\M^1(t,T)$ due to Lemmas~\ref{lem:I:M2-M2-map}, \ref{lem:Q:M1-M1-map} and \ref{lem:int:M1-M1-map}.
    \end{proof}
    
    \begin{cor}\label{cor:Phi-H2}
        Let $\xi\in\L^{2,d}(t)$ and $X,Y\in\H^{2,d}(t,T)$.
        If Assumption~\ref{ass:1-non-lipschitz} is satisfied, then $\Phi^{t,\xi}(X,Y)\in\H^{2,d}(t,T)$.
    \end{cor}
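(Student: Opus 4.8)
The plan is to verify separately the two defining properties of $\H^{2,d}(t,T)$: that $\Phi^{t,\xi}(X,Y)$ belongs to $\M^{2,d}(t,T)$ and that its running supremum has finite second moment. The preceding corollary already gives $\Phi^{t,\xi}(X,Y)\in\M^{1,d}(t,T)$, which is exactly the standing hypothesis of Lemma~\ref{lem:Phi-growth}; since moreover $X,Y\in\H^{2,d}(t,T)\subseteq\M^{2,d}(t,T)$, that lemma is applicable.

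The heart of the argument is to apply Lemma~\ref{lem:Phi-growth} with $s=T$ and then show that its right-hand side is finite. Because $X,Y\in\H^{2,d}(t,T)$, monotonicity of $\E$ yields $\sup_{t\leq u\leq T}\E[\|X_u\|^2]\leq\E[\sup_{t\leq u\leq T}\|X_u\|^2]<\infty$ and, since $\|Y_u\|_{\L^2}^2=\E[\|Y_u\|^2]$, also $\sup_{t\leq u\leq T}\|Y_u\|_{\L^2}^2<\infty$. Using the subadditivity of $\E$ to interchange expectation and time integral as an inequality, I would bound
\begin{equation*}
\E\!\l[\int_t^T\kappa(u)\|X_u\|^2\d u\r]\leq\l(\int_t^T\kappa(u)\d u\r)\E\!\l[\sup_{t\leq u\leq T}\|X_u\|^2\r]<\infty
\end{equation*}
and
\begin{equation*}
\E\!\l[\int_t^T K_u\l(1+\|Y_u\|_{\L^2}^2\r)\d u\r]\leq\l(1+\E\!\l[\sup_{t\leq u\leq T}\|Y_u\|^2\r]\r)\|K\|_{\M^1}<\infty,
\end{equation*}
the finiteness following from integrability of $\kappa$ and from $K\in\M^1(0,T)$. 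Together with $\|\xi\|_{\L^2}<\infty$, Lemma~\ref{lem:Phi-growth} then delivers $\E[\sup_{t\leq w\leq T}\|\Phi^{t,\xi}(X,Y)_w\|^2]<\infty$.

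It remains to show $\Phi^{t,\xi}(X,Y)\in\M^{2,d}(t,T)$, which I would do termwise in \eqref{eq:Phi}. The constant-in-time term $\xi\in\L^{2,d}(t)$ lies in $\M^{2,d}(t,T)$ by Lemma~\ref{lem:Lp-sum-Mp}. By Corollary~\ref{cor:integrable-coefficients}, whose hypothesis \eqref{ineq:supE} holds since $X,Y\in\H^{2,d}(t,T)$, each coefficient $b,h_{ij},g_i$ evaluated along $(X,Y)$ lies in $\M^2(t,T)$; Lemma~\ref{lem:I:M2-M2-map} then places the It\^o integral term in $\M^{2,d}(t,T)$. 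For the $\d u$ and $\d\l<B^i,B^j\r>$ integrals I would re-run the density arguments behind Lemmas~\ref{lem:int:M1-M1-map} and \ref{lem:Q:M1-M1-map}, but in $\M^2$: approximate the $\M^2$-integrand by simple processes and pass to the limit, the required $\M^2$-continuity of the two integral maps being furnished by the $p=2$ case of Lemma~\ref{lem:Q-p-bound} and the elementary bound used in \eqref{ineq:sup-b}, together with closedness of $\M^2(t,T)$. This last $\M^1$-to-$\M^2$ upgrade is the only genuine obstacle: the mapping lemmas of Section~\ref{sec:G-setting} give these two integrals only in $\M^1$, and because $\M^{2,d}(t,T)$ is a completion, mere finiteness of the $\|\cdot\|_{\M^2}$-norm does not by itself establish membership, so one really needs the approximating simple integrals to converge in $\M^2$, which is precisely what the second-moment estimates provide.
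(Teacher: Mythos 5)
Your proposal is correct, and its central estimate coincides with the paper's entire proof: apply Lemma~\ref{lem:Phi-growth} with $s=T$, interchange $\E$ with the time integral via sub-additivity, bound $\E\!\l[\l|X_u\r|^2\r]$ and $\l\|Y_u\r\|_{\L^2}^2$ by the $\H^2$-norms of $X$ and $Y$, and conclude finiteness using the constant $C=\int_0^T\kappa(s)\d s+\l\|K\r\|_{\M^1}$. Where you genuinely diverge is in treating membership in $\M^{2,d}(t,T)$ as a separate obligation: the paper's proof consists of the norm bound alone and then declares $\Phi^{t,\xi}(X,Y)\in\H^{2,d}(t,T)$, even though the preceding corollary supplies only $\Phi^{t,\xi}(X,Y)\in\M^{1,d}(t,T)$ --- Lemmas~\ref{lem:int:M1-M1-map} and~\ref{lem:Q:M1-M1-map} place the $\d u$- and $\d\l<B^i,B^j\r>$-integrals merely in $\M^1$, and only the It\^o integral is covered in $\M^2$ by Lemma~\ref{lem:I:M2-M2-map}. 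As you rightly stress, finiteness of the $\H^2$-norm of an $\M^1$-element does not by itself give membership in the completion $\M^2(t,T)$ in this non-dominated setting, so your third paragraph closes a step the paper glosses over rather than reproducing part of its argument; what it buys is that the corollary actually meets the paper's own definition of $\H^{2,d}(t,T)$ as a subset of $\M^{2,d}(t,T)$. One caveat on your sketch of that upgrade: the strategy (simple approximation of the integrand in $\M^2$, the $p=2$ case of Lemma~\ref{lem:Q-p-bound} resp.\ the Cauchy--Schwarz bound for continuity, completeness of $\M^2$) is right, but to know that the integrals with \emph{simple} integrands lie in $\M^2$ you additionally need $\l<B^a\r>\in\M^2(0,T)$ and $\l<B^a\r>_{t_k}\in\L^2(t_k)$, i.e.\ an $\L^2$-analogue of Lemma~\ref{lem:QV-M1}; this is true (argue as in Lemmas~\ref{lem:B-M2} and~\ref{lem:QV-M1} using fourth moments of $B^a$ together with Lemma~\ref{lem:I-properties}), but it is not available verbatim in the paper, so ``re-running the density arguments in $\M^2$'' requires this one extra ingredient to be spelled out.
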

    \begin{proof}
        Lemma~\ref{lem:Phi-growth} implies that
        \begin{align*}
            \l\|\Phi^{t,\xi}(X,Y)\r\|_{\H^{2}}^2
            &=\E\!\l[ \sup_{t\leq s\leq T}  \l| \Phi^{t,\xi}(X,Y)_s\r|^2 \r]
            \\&\leq 
            \CK \l( \l\|\xi\r\|_{\L^{2}}^2 + \E\!\l[ \int_t^T \kappa(u)\l|X_u\r|^2 + K_u\l(1 + \l\|Y_u\r\|_{\L^{2}}^2\r) \d u \r] \r)
            \\&\leq 
            \CK \l( \l\|\xi\r\|_{\L^{2}}^2 +  \int_t^T \kappa(u) \E\!\l[ \l|X_u\r|^2\r] + \E\!\l[K_u\r]\l(1 + \l\|Y_u\r\|_{\L^{2}}^2\r) \d u \r)
            \\&\leq 
            \CK \l( \l\|\xi\r\|_{\L^{2}}^2 +  \int_t^T \kappa(u) \E\!\l[ \sup_{t\leq w\leq T}\l|X_w\r|^2\r] + \E\!\l[K_u\r]\l(1 + \E\l[ \sup_{t\leq w\leq T} \l|Y_w\r|^2\r]\r) \d u \r)
            \\&=
            \CK \l( \l\|\xi\r\|_{\L^{2}}^2 +  \int_t^T \kappa(u) \l\|X\r\|_{\H^{2}}^2 + \E\!\l[K_u\r]\l(1 + \l\|Y\r\|_{\H^{2}}^2\r) \d u \r)
            \\&\leq 
            \CK \l( \l\|\xi\r\|_{\L^{2}}^2 + C\l( 1 + \l\|X\r\|_{\H^{2}}^2 + \l\|Y\r\|_{\H^{2}}^2\r) \r)
            <\infty,
        \end{align*}
        i.e., $\Phi^{t,\xi}(X,Y)\in\H^{2}(t,T)$.
    \end{proof}
    
    \begin{lem}\label{lem:Phi-continuity}
        If Assumption~\ref{ass:1-non-lipschitz} is satisfied, then there exists a constant $\CK>0$ such that
        \begin{flalign*}
            \quad
            \E&\l[ \sup_{t\leq w\leq s}\l\|\Phi^{t,\xi}(X,Y)_w -\Phi^{t,\eta}(X',Y')_w\r\|^2\r]
            &&
            \\&\leq 
            \CK \l( \l\|\xi-\eta\r\|_{\L^{2}}^2 + \E\!\l[ \int_t^s \kappa\!\l(u\r) \rho_1\!\l( \l|X_u- X'_u \r|^2 \r) + K_u\,\rho_2\!\l(\l\|Y_u- Y'_u\r\|_{\L^{2}}^2 \r) \d u  \r] \r)
        \end{flalign*}
        for all $t\leq s\leq T$, $\xi,\eta\in\L^{2,d}(t)$, and $X,X',Y,Y'\in\H^{2,d}(t,T)$.
    \end{lem}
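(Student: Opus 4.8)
The plan is to follow the proof of Lemma~\ref{lem:Phi-growth} almost verbatim, replacing the growth estimate \eqref{ineq:ass-1-growth} by the continuity estimate \eqref{ineq:ass-1-continuity}. First I would observe that, since $X,X',Y,Y'\in\H^{2,d}(t,T)$, Corollary~\ref{cor:Phi-H2} guarantees that both $\Phi^{t,\xi}(X,Y)$ and $\Phi^{t,\eta}(X',Y')$ lie in $\H^{2,d}(t,T)\subseteq\M^{1,d}(t,T)$, so every integral in \eqref{eq:Phi} is well defined. By linearity of all integrals appearing in \eqref{eq:Phi}, the difference $\Phi^{t,\xi}(X,Y)_w-\Phi^{t,\eta}(X',Y')_w$ equals $(\xi-\eta)$ plus the $\d u$-, $\d\l<B^i,B^j\r>$- and $\d B^i$-integrals of the coefficient differences $b(u,X_u,Y_u)-b(u,X'_u,Y'_u)$, and likewise for $h_{ij}$ and $g_i$.

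Next I would apply the elementary inequality $\l(\sum_{k=1}^{N}a_k\r)^2\leq N\sum_{k=1}^N a_k^2$ with $N=2+n+n^2$ to split $\E[\sup_{t\leq w\leq s}\|\cdot\|^2]$ into the initial term $\E[|\xi-\eta|^2]=\|\xi-\eta\|_{\L^2}^2$ together with the drift, quadratic-variation and stochastic-integral contributions, exactly as in \eqref{ineq:sup}. The crucial replacement is that for each component $f=b_k,h_{kij},g_{ki}$ the continuity estimate \eqref{ineq:ass-1-continuity}, read pointwise in $\omega$ with the state arguments $x=X_u(\omega)$, $y=X'_u(\omega)$ and the law arguments $\xi=Y_u$, $\eta=Y'_u$, gives
\begin{equation*}
    \l|f(u,X_u,Y_u)-f(u,X'_u,Y'_u)\r|^2\leq\kappa(u)\,\rho_1\!\l(\l|X_u-X'_u\r|^2\r)+K_u\,\rho_2\!\l(\l\|Y_u-Y'_u\r\|_{\L^2}^2\r).
\end{equation*}
This plays the role that the growth bound played in \eqref{ineq:sup-b}--\eqref{ineq:sup-g}, and I would substitute it into each of the three integral estimates.

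For the drift I would use the Cauchy--Schwarz inequality in $u$ to produce the factor $(s-t)$ while keeping the time integral inside a single expectation; for the quadratic-variation terms I would invoke Lemma~\ref{lem:Q-p-bound} with $p=2$, and for the stochastic integrals Lemma~\ref{lem:I-p-bound} with $p=2$, in each case inserting the pointwise continuity bound above. The integrability required to apply these estimates — namely that the coefficient differences lie in $\M^1(t,T)$, respectively $\M^2(t,T)$ — follows from Corollary~\ref{cor:integrable-coefficients}. Collecting the three contributions reproduces precisely the constant $\CK$ of \eqref{eq:K}, which is independent of $X,X',Y,Y'$, and yields the asserted bound.

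The main point requiring care is not any single estimate but the correct bookkeeping of the two distinct roles played by the arguments of $f$: the second slot is the pathwise state, to which $\rho_1$ and the deterministic modulus $\kappa$ are attached, whereas the third slot is the $\L^2$-law, controlled by $\rho_2$ and the random process $K$. Keeping these separate when passing the expectation through the time integral — and respecting that the integral estimates of Lemmas~\ref{lem:Q-p-bound} and \ref{lem:I-p-bound} are stated with the time integration outside the expectation — is the only delicate part; otherwise the argument is a line-by-line repetition of Lemma~\ref{lem:Phi-growth}.
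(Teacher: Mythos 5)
Your plan reproduces the paper's proof structure almost exactly: the Jensen decomposition with the factor $(2+n+n^2)$ as in \eqref{ineq:Phi-continuity}, pathwise Cauchy--Schwarz for the drift, Lemmas~\ref{lem:Q-p-bound} and \ref{lem:I-p-bound} with $p=2$ for the other two terms, the pointwise continuity bound \eqref{ineq:ass-1-continuity} in place of the growth bound, and the constant $\CK$ of \eqref{eq:K}. There is, however, one step that does not go through as you describe it. You propose to invoke Lemmas~\ref{lem:Q-p-bound} and \ref{lem:I-p-bound} \emph{as stated}, i.e. with bounds of the form $\int_t^s\E\!\l[\,\cdot\,\r]\d u$, in which the time integral sits outside the expectation. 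Inserting the pointwise continuity bound there yields
\begin{equation*}
    \int_t^s \kappa(u)\,\E\!\l[\rho_1\!\l(\l|X_u-X'_u\r|^2\r)\r] + \E\!\l[K_u\r]\,\rho_2\!\l(\l\|Y_u-Y'_u\r\|_{\L^{2}}^2\r) \d u ,
\end{equation*}
whereas the lemma to be proved has $\E\!\l[\int_t^s \kappa(u)\,\rho_1\!\l(\l|X_u-X'_u\r|^2\r)+K_u\,\rho_2\!\l(\l\|Y_u-Y'_u\r\|_{\L^{2}}^2\r)\d u\r]$ on the right-hand side. Under a sublinear expectation one only has $\E\!\l[\int_t^s Z_u\d u\r]\leq\int_t^s\E\!\l[Z_u\r]\d u$ (represent $\E$ as $\sup_{P\in\CP}E_P$ and apply Fubini under each $P$); there is no reverse inequality. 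So the quantity your route produces \emph{dominates} the asserted bound, and you cannot pass from it back to the statement of the lemma: what you prove is the weaker version with $\int\E$ in place of $\E\!\int$. (That weaker version would in fact suffice for Corollary~\ref{cor:Phi-continuity} and Proposition~\ref{prp:fixed-point}, where the bound is immediately relaxed to that form, but it is not the lemma as stated.)

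The repair is small, and it is what the paper tacitly does — and what you already do correctly for the drift term: keep the whole time integral under a single expectation. For the quadratic-variation and stochastic-integral terms this means using not the statements but the intermediate estimates established \emph{inside} the proofs of Lemmas~\ref{lem:Q-p-bound} and \ref{lem:I-p-bound}, namely
\begin{equation*}
    \E\!\l[\sup_{t\leq w\leq s}\l|\int_t^w Z_u \d\l<B^i,B^j\r>_u\r|^2\r]
    \leq \overline{\sigma}_{ij}^{4}\l(s-t\r)\E\!\l[\int_t^s\l|Z_u\r|^2\d u\r] ,
\end{equation*}
and its analogue with constant $C_2\,\overline{\sigma}_{ii}^{2}$ for the integral against $B^i$; these appear one line before the final relaxation to $\int\E$ in those proofs. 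With the expectation still outside the time integral, the pointwise bound \eqref{ineq:ass-1-continuity} can be inserted by monotonicity of $\E$ and pathwise monotonicity of the Lebesgue integral, and the claimed inequality follows with the same constant $\CK$. Apart from this substitution, your argument coincides with the paper's.
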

    \begin{proof}
        Jensen's inequality yields
        \begin{flalign*}
            \quad
            \E&\l[ \sup_{t\leq w\leq s}\l| \Phi^{t,\xi}(X,Y)_w - \Phi^{t,\eta}(X',Y')_w\r|^2 \r]
            &&
            \\&\leq 
            \l(2+n+n^2\r)\l( \E\!\l[\l|\xi-\eta\r|^2\r] + 
            \E\!\l[ \sup_{t\leq w\leq s}\l| \int_t^w b\!\l(u,X_u,Y_u\r) - b\!\l(u,X'_u,Y'_u\r) \d u \r|^2 \r]
            \r. \\ & \l. \qquad +
            \sum_{i,j=1}^n \E\!\l[ \sup_{t\leq w\leq s}\l| \int_t^w h_{ij}\!\l(u,X_u,Y_u\r) - h_{ij}\!\l(u,X'_u,Y'_u\r) \d \l<B^i,B^j\r>_u \r|^2\r]
            \r. \\ & \l. \qquad +
            \sum_{i=1}^n \E\!\l[ \sup_{t\leq w\leq s}\l| \int_t^w g_{i}\!\l(u,X_u,Y_u\r) - g_{i}\!\l(u,X'_u,Y'_u\r) \d B^i_u \r|^2\r]
            \r).
            \numberthis\label{ineq:Phi-continuity}
        \end{flalign*}
        Due to the continuity assumption \eqref{ineq:ass-1-continuity}, we have for the integral with respect to $u$ that
        \begin{flalign*}
            \quad
            \E&\l[ \sup_{t\leq w\leq s}\l| \int_t^w b\!\l(u,X_u,Y_u\r) - b\!\l(u,X'_u,Y'_u\r) \d u \r|^2 \r]
            &&
            \\&\leq 
            \l(s-t\r) \E\!\l[ \int_t^s \l| b\!\l(u,X_u,Y_u\r) - b\!\l(u,X'_u,Y'_u\r) \r|^2 \d u  \r]
            \\&\leq 
            \l(s-t\r) \E\!\l[ \int_t^s \kappa\!\l(u\r) \rho_1\!\l( \l|X_u- X'_u \r|^2 \r) + K_u\,\rho_2\!\l(\l\|Y_u- Y'_u\r\|_{\L^{2}}^2 \r) \d u  \r]
            .\numberthis\label{ineq:Phi-continuity-b}
        \end{flalign*}
        Analogously, by Lemmas~\ref{lem:Q-p-bound} and \ref{lem:I-p-bound}, we have for the integral with respect to $\l<B^i,B^j\r>$ with $1\leq i,j\leq n$ that
        \begin{flalign*}
            \quad
            \E&\l[ \sup_{t\leq w\leq s}\l| \int_t^w h_{ij}\!\l(u,X_u,Y_u\r) - h_{ij}\!\l(u,X'_u,Y'_u\r) \d \l<B^i,B^j\r>_u \r|^2\r]
            &&
            \\&\leq 
            \overline{\sigma}_{ij}^4\l(s-t\r)\E\!\l[\int_t^s \l| h_{ij}\!\l(u,X_u,Y_u\r) - h_{ij}\!\l(u,X'_u,Y'_u\r) \r|^2 \d u \r]
            \\&\leq 
            \overline{\sigma}_{ij}^4\l(s-t\r) \E\!\l[ \int_t^s \kappa\!\l(u\r) \rho_1\!\l( \l|X_u- X'_u \r|^2 \r) + K_u\,\rho_2\!\l(\l\|Y_u- Y'_u\r\|_{\L^{2}}^2 \r) \d u  \r]
            ,\numberthis\label{ineq:Phi-continuity-h}
        \end{flalign*}
        and for the integral with respect to $B^i$ with $1\leq i\leq n$ that
        \begin{flalign*}
            \quad
            \E&\l[ \sup_{t\leq w\leq s}\l| \int_t^w g_{i}\!\l(u,X_u,Y_u\r) - g_{i}\!\l(u,X'_u,Y'_u\r) \d B^i_u \r|^2\r]
            &&
            \\&\leq 
            C_2\,\overline{\sigma}_{ii}^2 \E\!\l[\int_t^s \l| g_{i}\!\l(u,X_u,Y_u\r) - g_{i}\!\l(u,X'_u,Y'_u\r) \r|^2 \d u \r]
            \\&\leq 
            C_2\,\overline{\sigma}_{ii}^2 \E\!\l[ \int_t^s \kappa\!\l(u\r) \rho_1\!\l( \l|X_u- X'_u \r|^2 \r) + K_u\,\rho_2\!\l(\l\|Y_u- Y'_u\r\|_{\L^{2}}^2 \r) \d u  \r]
            .\numberthis\label{ineq:Phi-continuity-g}
        \end{flalign*}
        Combining \eqref{ineq:Phi-continuity}, \eqref{ineq:Phi-continuity-b}, \eqref{ineq:Phi-continuity-h} and \eqref{ineq:Phi-continuity-g}, we obtain
        \begin{flalign*}
            \quad
            \E&\l[ \sup_{t\leq w\leq s}\l| \Phi^{t,\xi}(X,Y)_w - \Phi^{t,\eta}(X',Y')_w\r|^2 \r]
            &&
            \\&\leq 
            \CK \l(\l\|\xi-\eta\r\|_{\L^{2}}^2 +  \E\!\l[ \int_t^s \kappa\!\l(u\r) \rho_1\!\l( \l|X_u- X'_u \r|^2 \r) + K_u\,\rho_2\!\l(\l\|Y_u- Y'_u\r\|_{\L^{2}}^2 \r) \d u  \r] \r)
            ,
        \end{flalign*}
        where the constant $\CK$ can be chosen as in \eqref{eq:K}.
    \end{proof}
    
    \begin{cor}\label{cor:Phi-continuity}
        If Assumption~\ref{ass:1-non-lipschitz} is satisfied, then the map
        \begin{equation*}
            \L^{2,d}(t)\times\H^{2,d}(t,T)\times\H^{2,d}(t,T)\rightarrow \H^{2,d}(t,T),
            \qquad
            (\xi,X,Y)\mapsto \Phi^{t,\xi}(X,Y)
        \end{equation*}
        is continuous.
    \end{cor}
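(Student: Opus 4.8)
The plan is to prove sequential continuity, which suffices here since $\L^{2,d}(t)$ and $\H^{2,d}(t,T)$ are normed spaces. So I would fix $(\xi,X,Y)$ in the domain and take sequences $\xi_m\to\xi$ in $\L^{2,d}(t)$ together with $X^m\to X$ and $Y^m\to Y$ in $\H^{2,d}(t,T)$. By Corollary~\ref{cor:Phi-H2} every $\Phi^{t,\xi_m}(X^m,Y^m)$ and $\Phi^{t,\xi}(X,Y)$ lies in $\H^{2,d}(t,T)$, so the task reduces to showing $\l\|\Phi^{t,\xi_m}(X^m,Y^m)-\Phi^{t,\xi}(X,Y)\r\|_{\H^{2}}\to 0$. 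The key input is Lemma~\ref{lem:Phi-continuity} with $s=T$, which bounds the squared norm by
\begin{equation*}
    \CK\l(\l\|\xi_m-\xi\r\|_{\L^{2}}^2 + \E\!\l[\int_t^T \kappa(u)\,\rho_1\!\l(\l|X^m_u-X_u\r|^2\r) + K_u\,\rho_2\!\l(\l\|Y^m_u-Y_u\r\|_{\L^{2}}^2\r)\d u\r]\r).
\end{equation*}
It then suffices to show that each of the three summands tends to $0$.

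The first term vanishes by hypothesis. For the second, I would pass the expectation inside the concave modulus using the same Jensen-type inequality already invoked in the proof of Corollary~\ref{cor:integrable-coefficients}, giving $\E[\rho_1(|X^m_u-X_u|^2)]\le\rho_1(\E[|X^m_u-X_u|^2])$; then monotonicity of $\rho_1$ together with $\E[|X^m_u-X_u|^2]\le\l\|X^m-X\r\|_{\H^{2}}^2$ yields
\begin{equation*}
    \E\!\l[\int_t^T\kappa(u)\,\rho_1\!\l(\l|X^m_u-X_u\r|^2\r)\d u\r]\le\rho_1\!\l(\l\|X^m-X\r\|_{\H^{2}}^2\r)\int_t^T\kappa(u)\d u\le C\,\rho_1\!\l(\l\|X^m-X\r\|_{\H^{2}}^2\r).
\end{equation*}
For the third term I would use that $\l\|Y^m_u-Y_u\r\|_{\L^{2}}^2$ is deterministic, so $\rho_2(\l\|Y^m_u-Y_u\r\|_{\L^{2}}^2)$ factors out of $\E[\cdot]$; after bounding $\l\|Y^m_u-Y_u\r\|_{\L^{2}}^2\le\l\|Y^m-Y\r\|_{\H^{2}}^2$ and $\int_t^T\E[K_u]\d u\le\l\|K\r\|_{\M^1}\le C$, one arrives at $C\,\rho_2(\l\|Y^m-Y\r\|_{\H^{2}}^2)$.

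Finally, since $\rho_1,\rho_2$ are continuous with $\rho_1(0)=\rho_2(0)=0$ and $\l\|X^m-X\r\|_{\H^{2}}\to0$, $\l\|Y^m-Y\r\|_{\H^{2}}\to0$, both nonlinear contributions vanish in the limit, establishing the desired convergence. The only point requiring a little care is the interchange of $\E$ with the concave moduli and with the time integral; but both are instances of the concave Jensen inequality and the Fubini-type bound for the sublinear expectation already used earlier in this section, so no genuinely new obstacle arises. In effect the corollary is a repackaging of Lemma~\ref{lem:Phi-continuity} with the continuity of $\rho_1,\rho_2$ at the origin, the nonlinearity of the moduli being the only feature that prevents an immediate Lipschitz-style conclusion.
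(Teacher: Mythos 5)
Your proposal is correct and takes essentially the same route as the paper: both apply Lemma~\ref{lem:Phi-continuity} with $s=T$, pass the sublinear expectation through the concave moduli (Jensen for $\rho_1$, and factoring out the deterministic $\rho_2$ term), bound $\E\!\l[\l|X_u-X'_u\r|^2\r]$ and $\l\|Y_u-Y'_u\r\|_{\L^2}^2$ by the corresponding $\H^2$-norms, and conclude from the continuity of $\rho_1,\rho_2$ at the origin. Your sequential phrasing versus the paper's direct limit statement is an immaterial difference.
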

    \begin{proof}
        Let $\xi,\eta\in\L^{2}(t)$, and $X,X',Y,Y'\in\H^{2}(t,T)$.
        By Lemma~\ref{lem:Phi-continuity}, we have
        \begin{flalign*}
            \quad
            \| & \Phi^{t,\xi}(X,Y)-\Phi^{t,\eta}(X',Y')\|_{\H^{2}}^2
            &&
            \\&=
            \E\!\l[ \sup_{t\leq w\leq T}\l\|\Phi^{t,\xi}(X,Y)_w -\Phi^{t,\eta}(X',Y')_w\r\|^2\r]
            \\&\leq 
            \CK \l( \l\|\xi-\eta\r\|_{\L^{2}}^2 + \E\!\l[ \int_t^T \kappa\!\l(u\r) \rho_1\!\l( \l|X_u- X'_u \r|^2 \r) + K_u\,\rho_2\!\l(\l\|Y_u- Y'_u\r\|_{\L^{2}}^2 \r) \d u  \r] \r)
            \\&\leq 
            \CK \l( \l\|\xi-\eta\r\|_{\L^{2}}^2 + \int_t^T \kappa\!\l(u\r) \rho_1\!\l( \E\!\l[ \l|X_u- X'_u \r|^2 \r] \r) + \E\!\l[K_u\r] \rho_2\!\l(\E\!\l[ \l|Y_u- Y'_u \r|^2 \r] \r) \d u \r)
            \\&\leq
            \CK \l( \l\|\xi-\eta\r\|_{\L^{2}}^2 + \int_t^T \kappa\!\l(u\r)\rho_1\!\l( \l\|X- X'\r\|_{\H^{2}}^2 \r) \d u  + \int_t^T \E\!\l[K_u\r]\rho_2\!\l( \l\|Y- Y'\r\|_{\H^{2}}^2 \r) \d u \r). 
        \end{flalign*}
        Since $\rho_1$ and $\rho_2$ are continuous, the right-hand side vanishes, when $(\xi,X,Y)\rightarrow (\eta,X',Y')$ in $\L^{2}\times\H^{2}(t,T)\times\H^{2}(t,T).$
    \end{proof}

    \begin{prp}\label{prp:fixed-point}
        Let $\xi\in\L^{2,d}(t)$.
        If Assumption~\ref{ass:1-non-lipschitz} is satisfied, then the map
        \begin{equation*}
            \H^{2,d}(t,T) \rightarrow \H^{2,d}(t,T),\qquad X\mapsto \Phi^{t,\xi}(X):=\Phi^{t,\xi}(X,X)
        \end{equation*}
        has a unique fixed point.
    \end{prp}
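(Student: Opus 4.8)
The plan is to construct the fixed point as the limit of the Picard iteration, replacing the usual contraction argument (which is unavailable here) by a Bihari--Osgood comparison. Fix $\xi\in\L^{2,d}(t)$, set $X^{(0)}:=\xi$ (as a constant process, which lies in $\H^{2,d}(t,T)$) and define recursively $X^{(m+1)}:=\Phi^{t,\xi}(X^{(m)})=\Phi^{t,\xi}(X^{(m)},X^{(m)})$. By Corollary~\ref{cor:Phi-H2} every $X^{(m)}$ lies in $\H^{2,d}(t,T)$, so the iteration is well defined, and a fixed point of $X\mapsto\Phi^{t,\xi}(X)$ is exactly a solution of \eqref{eq:MF-GSDE} with initial value $\xi$. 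The argument then proceeds in four steps: a uniform a priori bound on the sequence, a Cauchy estimate producing a limit, identification of the limit as a fixed point, and uniqueness.

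First I would establish a uniform bound $\sup_m\|X^{(m)}\|_{\H^2}^2\le M<\infty$. Writing $a_m(s):=\E[\sup_{t\le w\le s}\|X^{(m)}_w\|^2]$, Lemma~\ref{lem:Phi-growth} together with the subadditivity of $\E$ and the interchange of $\E$ with the time integral (as in the proof of Corollary~\ref{cor:Phi-H2}) yields $a_{m+1}(s)\le \CK\|\xi\|_{\L^2}^2 + \CK\|K\|_{\M^1} + \CK\int_t^s(\kappa(u)+\E[K_u])a_m(u)\,du$, where I use $\E[|X^{(m)}_u|^2]\le a_m(u)$ and $\|X^{(m)}_u\|_{\L^2}^2\le a_m(u)$. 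A straightforward induction on $m$ then gives $a_m(s)\le (\CK\|\xi\|_{\L^2}^2+\CK\|K\|_{\M^1})\exp(\CK C)=:M$ for all $m$ and $s$, where $C$ is the constant defined after Assumption~\ref{ass:1-non-lipschitz}, exactly as in the classical Gronwall argument.

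The heart of the proof---and the main obstacle---is the convergence of $(X^{(m)})$ under only the Osgood condition \eqref{eq:ass-1-rho-integral}, where no contraction is available. I set $\rho:=\rho_1+\rho_2$, which is continuous, increasing, concave and satisfies $\int_{0^+}dr/\rho(r)=+\infty$, and consider $D_m(s):=\sup_{p\ge 0}\E[\sup_{t\le w\le s}\|X^{(m+p)}_w-X^{(m)}_w\|^2]$, which is bounded by $4M$ by the previous step and subadditivity. Applying Lemma~\ref{lem:Phi-continuity} to $X^{(m+p)}-X^{(m)}=\Phi^{t,\xi}(X^{(m+p-1)})-\Phi^{t,\xi}(X^{(m-1)})$, and passing the expectation through $\rho_1,\rho_2$ by Jensen's inequality (valid because $\E=\sup_{P\in\CP}E_P$ and $\rho_1,\rho_2$ are concave and increasing, as already used in Corollary~\ref{cor:integrable-coefficients}), I obtain the uniform-in-$p$ bound $D_m(s)\le \CK\int_t^s(\kappa(u)+\E[K_u])\rho(D_{m-1}(u))\,du$. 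Taking $\limsup_{m\to\infty}$ and using the reverse Fatou lemma (the integrands are dominated by the integrable map $(\kappa+\E[K])\rho(4M)$) together with the continuity of $\rho$, the function $\bar D:=\limsup_m D_m$ satisfies the same inequality $\bar D(s)\le\CK\int_t^s(\kappa(u)+\E[K_u])\rho(\bar D(u))\,du$ with $\bar D(t)=0$. By Bihari's inequality and the Osgood condition \eqref{eq:ass-1-rho-integral} this forces $\bar D\equiv 0$. Hence $D_m(T)\to 0$, which is precisely the Cauchy criterion for $(X^{(m)})$ in $\H^{2,d}(t,T)$; since this space is complete, $X^{(m)}\to X$ for some $X\in\H^{2,d}(t,T)$.

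It remains to identify $X$ as a fixed point and to prove uniqueness. Since $X\mapsto\Phi^{t,\xi}(X,X)$ is continuous on $\H^{2,d}(t,T)$ by Corollary~\ref{cor:Phi-continuity}, passing to the limit in $X^{(m+1)}=\Phi^{t,\xi}(X^{(m)})$ gives $X=\Phi^{t,\xi}(X)$, so $X$ is a fixed point. For uniqueness, if $X$ and $X'$ are two fixed points, then setting $v(s):=\E[\sup_{t\le w\le s}\|X_w-X'_w\|^2]$ and applying Lemma~\ref{lem:Phi-continuity} (now with $\xi=\eta$) together with the same Jensen step yields $v(s)\le\CK\int_t^s(\kappa(u)+\E[K_u])\rho(v(u))\,du$ with $v(t)=0$; Bihari's inequality and \eqref{eq:ass-1-rho-integral} again give $v\equiv 0$, i.e.\ $X=X'$ in $\H^{2,d}(t,T)$.
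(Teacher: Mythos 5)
Your proposal is correct and takes essentially the same route as the paper's own proof: Picard iteration, a Gronwall-type uniform a priori bound, a sup-over-tail-index/limsup/reverse-Fatou argument reducing the Cauchy property to Bihari's inequality under the Osgood condition \eqref{eq:ass-1-rho-integral}, continuity of $\Phi^{t,\xi}$ (Corollary~\ref{cor:Phi-continuity}) to identify the limit, and Bihari again for uniqueness. The only step to phrase more carefully is the uniform bound: an induction whose hypothesis is the constant bound $a_m(s)\le M$ does not close (it only yields $a_{m+1}(s)\le \CK(\|\xi\|_{\L^2}^2+\|K\|_{\M^1})+\CK C M$, which exceeds $M$ whenever $\CK C>0$), so the induction must be run against the $s$-dependent majorant $\CK(\|\xi\|_{\L^2}^2+\|K\|_{\M^1})\exp\bigl(\CK\int_t^s\gamma(u)\d u\bigr)$ --- precisely the role played by the ODE solution $q(s)$ in the paper --- after which your constant $M$ bounds $a_m(s)$ uniformly in $m$ and $s$, as claimed.
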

    \begin{proof}        
        \emph{Existence:}
        Fix $\xi\in\L^{2}(t)$, and define the Picard sequence $\l(X^m\r)_{m\in\Nbb}$ recursively by
         \begin{equation*}
             X^0:=\xi\ind_{[t,T)}
             \qquad\text{and}\qquad
             X^{m+1}:=\Phi^{t,\xi}\!\l(X^m\r),\quad m\in\Nbb.
         \end{equation*}
         Since $\l\|X^0\r\|_{\H^{2}}=\l\|\xi\r\|_{\L^{2}}<\infty$ and thus $X^0\in\H^{2}(t,T)$, the sequence is well-defined in $\H^{2}(t,T)$ due to Corollary~\ref{cor:Phi-H2}.
         
         Let $\CK$ be the constant given in \eqref{eq:K}, and consider the function $q:\,[t,T]\rightarrow\Rbb$ defined by
         \begin{equation*}
             q(s):=\CK e^{\CK\int_t^s \gamma(u) \d u}\l(\l\|\xi\r\|_{\L^{2}}^2 +\int_t^s \E\!\l[K_u\r]\,e^{-\CK\int_t^u \gamma(w) \d w} \d u \r),
         \end{equation*}
         where $\gamma(u):=\E\l[K_u\r] + \kappa(u)\geq 0$ for $t\leq u\leq T$.
         Then $q$ is continuous, increasing and bounded, and the solution of the ODE
         \begin{align*}
             \d q(s) &= \CK\l(\E\!\l[ K_s\r] + \gamma(s)\,q(s) \r) \d s, \qquad t\leq s\leq T, \\
             q(t) &= \CK \l\|\xi\r\|_{\L^{2}}^2.
         \end{align*}
         By induction over $m\in\Nbb$, we show that
         \begin{equation*}
             \E\!\l[\sup_{t\leq w\leq s}\l|X^m_w\r|^2\r]\leq q(s)
         \end{equation*}
         for all $t\leq s\leq T$ and $m\in\Nbb$.
         Since $\CK>1$, we have for all $t\leq s\leq T$ that
         \begin{equation*}
             \E\!\l[\sup_{t\leq w\leq s}\l|X^0_w\r|^2\r]=\l\|\xi\r\|_{\L^{2}}^2\leq q(t)\leq q(s).
         \end{equation*}
         Suppose $\E\!\l[\sup_{t\leq w\leq s}\l|X^m_w\r|^2\r]\leq q(s)$ for all $t\leq s\leq T$ for some $m\in\Nbb$.
         By Lemma~\ref{lem:Phi-growth}, we have for all $t\leq s\leq T$ that
         \begin{align*}
             \E\!\l[\sup_{t\leq w\leq s}\l| X^{m+1}_w\r|^2\r]
             &=
             \E\!\l[\sup_{t\leq w\leq s}\l| \Phi^{t,\xi}(X^{m})_w\r|^2\r]
             \\&\leq 
             \CK \l( \l\|\xi\r\|_{\L^{2}}^2 + \int_t^s \E\!\l[ K_u\r] + \gamma(s) \E\!\l[\l|X^m_u\r|^2\r] \d u \r)
             \\&\leq 
             \CK \l( \l\|\xi\r\|_{\L^{2}}^2 + \int_t^s \E\!\l[ K_u\r] + \gamma(s) \E\!\l[\sup_{t\leq w\leq u}\l|X^m_w\r|^2\r] \d u \r)
             \\&\leq 
             \CK \l( \l\|\xi\r\|_{\L^{2}}^2 + \int_t^s \E\!\l[ K_u\r] + \gamma(s) q(u) \d u \r)
             \\&=
             q(s),
         \end{align*}
         which completes the induction.
         In particular, we established the uniform bound
         \begin{equation*}
             \sup_{m\in\Nbb} \E\!\l[\sup_{t\leq s\leq T}\l| X^{m}_s\r|^2\r] \leq q(T)=:\hat{q}.
         \end{equation*}

        Now, we show that the sequence $\l(X^m\r)_{m\in\Nbb}$ is Cauchy.
        Let $m,k\in\Nbb$ and $t\leq s\leq T$.
        By Lemma~\ref{lem:Phi-continuity}, we have
        \begin{flalign*}
            \quad
            \E&\l[\sup_{t\leq w\leq s}\l|X^{m+k+1}_w - X^{m+1}_w\r|^2 \r]
            &&\\
            &=
            \E\!\l[\sup_{t\leq w\leq s}\l|\Phi^{t,\xi}(X^{m+k})_w - \Phi^{t,\xi}(X^m)_w\r|^2 \r]
            \\&\leq
            \CK \int_t^s \gamma(u) \rho\!\l( \E\!\l[ \l|X^{m+k}_u-X^m_u\r|^2 \r] \r) \d u 
            \\&\leq 
            \CK \int_t^s \gamma(u) \rho\!\l( \E\!\l[ \sup_{t\leq w\leq u} \l|X^{m+k}_w-X^m_w\r|^2 \r] \r) \d u
            .\numberthis\label{ineq:Xmk-Xm-bound}       
         \end{flalign*}
    
        For $k,m\in\Nbb$, define the function $u_m^k:\,[t,T]\rightarrow\Rbb_+$ by
        \begin{equation*}
            u_m^k(s):=\E\!\left[ \sup_{t\leq w \leq s} \left|X^{m+k}_{w}-X^{m}_{w}\right|^2\right],
        \end{equation*}
        then plugging into \eqref{ineq:Xmk-Xm-bound} yields
        \begin{align*}
            u_{m+1}^k(s)
            &\leq
            \CK \int_t^{s} \gamma(w) \rho\!\l( u^k_m(w) \r) \d w
            .\numberthis\label{ineq:u-m-k}
        \end{align*}
        Taking the point-wise supremum of \eqref{ineq:u-m-k} over $k\in\Nbb$, we obtain
        \begin{align*}
            \sup_{k\in\Nbb} u_{m+1}^k(s)
            &\leq
            \sup_{k\in\Nbb} \CK \int_t^{s} \gamma(w) \rho\!\l( u^k_m(w) \r) \d w
            \\ &\leq
            \CK \int_t^{s} \gamma(w) \rho\!\l( \sup_{k\in\Nbb} u^k_m(w) \r) \d w
            .\numberthis\label{ineq:u-m}
        \end{align*}
        Define the point-wise limit superior
        \begin{equation*}
            u(s):= \limsup_{m\rightarrow \infty} \sup_{k\in\Nbb} u^k_m(s), \qquad t\leq s\leq T.
        \end{equation*}
        Since for all $k,m\in\Nbb$ and $t\leq s\leq T$, we have
        \begin{align*}
            0\leq u^k_m(s) 
            &=\sup_{t\leq w \leq s}\E\!\l[ \l|X^{m+k}_{w}-X^{m}_{w}\r|^2\r]
            \leq 
            \sup_{t\leq w \leq s} \l( \E\!\l[ \l|X^{m+k}_{w}\r|^2\r] + \E\!\l[ \l|X^{m}_{w}\r|^2\r] \r)
            \leq 2\,\hat{q},
        \end{align*}
        we also have $0\leq u(s)\leq 2\,\hat{q}$ for all $t\leq s\leq T$.
        In particular, $u$ and $u^k_m$ are Lebesgue integrable for any $k,m\in\Nbb$.
        Applying the Fatou-Lebesgue Theorem to \eqref{ineq:u-m}, yields
        \begin{equation*}
            0\leq u(s)\leq \CK \int_t^{s} \gamma(w) \rho\!\l( u(w) \r) \d w,
        \end{equation*}
        which implies $u\equiv0$ due to Bihari's inequality.
        Hence, $(X^m)_{m\in\mathbb{N}}$ is a Cauchy sequence with respect to the norm $\|\cdot\|_{\H^{2}}$.
        Since $\H^{2}(t,T)$ is complete, we have
        \begin{equation*}
            \lim_{m\rightarrow\infty}
            X^m=:X\in\H^{2}(t,T).
        \end{equation*}
        By Corollary~\ref{cor:Phi-continuity}, the map $X\mapsto \Phi^{t,\xi}(X)$ is continuous.
        Thus, we have
        \begin{align*}
            X=\lim_{m\rightarrow\infty}X^{m+1}
            =\lim_{m\rightarrow\infty}\Phi^{t,\xi}\!\l(X^m\r)
            =\Phi^{t,\xi}\!\l(\lim_{m\rightarrow\infty}X^m\r)
            =\Phi^{t,\xi}(X)
        \end{align*}
        in $\H^{2}(t,T)$.
        That is, $X$ is a fixed point of $\Phi^{t,\xi}$.

        \emph{Uniqueness.}
        Suppose $X,Y\in\H^{2}(t,T)$ satisfy $X=\Phi^{t,\xi}(X)$ and $Y=\Phi^{t,\xi}(Y)$.
        Lemma~\ref{lem:Phi-continuity} yields for all $t\leq s\leq T$ that
        \begin{flalign*}
            \quad
            \E&\l[\sup_{t\leq w\leq s}\l|X_w-Y_w\r|^2 \r]
            &&
            \\&=
            \E\!\l[\sup_{t\leq w\leq s}\l|\Phi^{t,\xi}(X)_w-\Phi^{t,\xi}(Y)_w\r|^2 \r]
            \\&\leq 
            \CK \int_t^s \gamma(u) \rho\!\l( \E\!\l[ \sup_{t\leq w\leq u}\l|X_w-Y_w\r|^2 \r] \r) \d u
            .\numberthis\label{ineq:X-Y}
        \end{flalign*}
        Define the function $u:\,[t,T]\rightarrow\Rbb_+$ by
        \begin{equation*}
            u(s):=\E\!\l[ \sup_{t\leq w\leq s}\l|X_w-Y_w\r|^2 \r].
        \end{equation*}
        Plugging $u$ into \eqref{ineq:X-Y}, we obtain
        \begin{equation*}
            u(s)\leq \CK \int_t^{s} \gamma(w) \rho\!\l( u(w) \r) \d w,
        \end{equation*}
        and Bihari's inequality yields $u\equiv 0$.
        Thus, $\Phi^{t,\xi}$ has a unique fixed point.
    \end{proof}

    Immediately, we deduce the following result.
    \begin{cor}\label{cor:fixed-point}
        Let $\xi\in\L^{2,d}(t)$ and $Y\in\H^{2,d}(t,T)$.
        If Assumption~\ref{ass:1-non-lipschitz} is satisfied, then the map
        \begin{equation*}
            \H^{2,d}(t,T) \rightarrow \H^{2,d}(t,T),\qquad X\mapsto \Phi^{t,\xi}(X,Y)
        \end{equation*}
        has a unique fixed point.
    \end{cor}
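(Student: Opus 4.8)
The plan is to repeat the fixed-point argument of Proposition~\ref{prp:fixed-point} almost verbatim, now treating the second argument $Y$ as a frozen parameter rather than coupling it to the first. Concretely, I would define the Picard sequence $(X^m)_{m\in\Nbb}$ in $\H^{2,d}(t,T)$ by setting $X^0:=\xi\ind_{[t,T)}$ and $X^{m+1}:=\Phi^{t,\xi}(X^m,Y)$ for $m\in\Nbb$. Since $\xi\in\L^{2,d}(t)$ gives $X^0\in\H^{2,d}(t,T)$ and $Y\in\H^{2,d}(t,T)$ is held fixed, Corollary~\ref{cor:Phi-H2} guarantees that the sequence is well defined in $\H^{2,d}(t,T)$.

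The one structural change is that the contraction estimate now involves only $\rho_1$. Applying Lemma~\ref{lem:Phi-continuity} with $\xi=\eta$ and the same fixed $Y=Y'$, the $\rho_2$-term disappears because $\l\|Y_u-Y_u\r\|_{\L^2}^2=0$ and $\rho_2(0)=0$, leaving
\begin{equation*}
    \E\!\l[\sup_{t\leq w\leq s}\l|X^{m+k+1}_w-X^{m+1}_w\r|^2\r] \leq \CK\int_t^s \kappa(u)\,\rho_1\!\l(\E\!\l[\sup_{t\leq w\leq u}\l|X^{m+k}_w-X^m_w\r|^2\r]\r)\d u.
\end{equation*}
Because $\rho_2\geq 0$, the hypothesis \eqref{eq:ass-1-rho-integral} forces $\int_0^1 \rho_1(r)^{-1}\d r=+\infty$, so $\rho_1$ by itself satisfies the Osgood condition required by Bihari's inequality. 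For the uniform moment bound I would invoke Lemma~\ref{lem:Phi-growth} exactly as in Proposition~\ref{prp:fixed-point}; the only difference is that the $Y$-dependent source term $\E[\int_t^s K_u(1+\|Y_u\|_{\L^2}^2)\d u]$ is now a fixed finite constant (since $Y\in\H^{2,d}(t,T)$ makes $\|Y_u\|_{\L^2}^2\leq\|Y\|_{\H^2}^2$), so the Gronwall multiplier reduces to $\kappa$ and the source is harmlessly absorbed into the comparison ODE, yielding $\sup_m\E[\sup_{t\leq s\leq T}|X^m_s|^2]\leq\hat q<\infty$.

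With the uniform bound in hand, the Fatou--Bihari argument of Proposition~\ref{prp:fixed-point} applies unchanged to the functions $u^k_m(s):=\E[\sup_{t\leq w\leq s}|X^{m+k}_w-X^m_w|^2]$, showing that $(X^m)_{m\in\Nbb}$ is Cauchy in $\H^{2,d}(t,T)$. Completeness of $\H^{2,d}(t,T)$ then furnishes a limit $X$, and the continuity of the map $X\mapsto\Phi^{t,\xi}(X,Y)$, which is the special case of Corollary~\ref{cor:Phi-continuity} with the first and third arguments held fixed, lets me pass to the limit and conclude $X=\Phi^{t,\xi}(X,Y)$. Uniqueness follows identically: if $X$ and $X'$ are two fixed points, Lemma~\ref{lem:Phi-continuity} with $Y=Y'$ bounds $u(s):=\E[\sup_{t\leq w\leq s}|X_w-X'_w|^2]$ by $\CK\int_t^s\kappa(w)\rho_1(u(w))\d w$, whence $u\equiv 0$ by Bihari's inequality. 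I do not expect any genuine obstacle; the only point worth flagging is the observation that freezing $Y$ decouples the estimate from $\rho_2$, reducing matters to the single Osgood modulus $\rho_1$, which is precisely what makes the deduction from Proposition~\ref{prp:fixed-point} immediate.
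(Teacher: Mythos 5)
Your argument is correct: freezing $Y$ makes every estimate of Proposition~\ref{prp:fixed-point} go through with the $\rho_2$-term deleted, your observation that \eqref{eq:ass-1-rho-integral} together with $\rho_2\geq 0$ yields $\int_0^1 \rho_1(r)^{-1}\d r=+\infty$ is exactly what Bihari's inequality needs, and the well-definedness (Corollary~\ref{cor:Phi-H2}), uniform bound (Lemma~\ref{lem:Phi-growth}), Cauchy, limit-passing (Corollary~\ref{cor:Phi-continuity}) and uniqueness steps are all invoked legitimately. The paper, however, does not re-run this machinery: it states the corollary as an \emph{immediate} consequence of Proposition~\ref{prp:fixed-point}, the implicit reduction being to absorb the frozen $Y$ into the coefficients. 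Concretely, setting $\hat f(s,x,\zeta,\omega):=f(s,x,Y_s,\omega)$ for $f=b_k,h_{kij},g_{ki}$ produces coefficients that no longer depend on their $\L^{2,d}$-argument $\zeta$; they satisfy \eqref{ineq:ass-1-continuity} with the same $\kappa,\rho_1,\rho_2$ (the $\rho_2$-term on the right-hand side is only an upper bound, so it may be kept), they satisfy \eqref{ineq:ass-1-growth} with $\hat K_s:=K_s\bigl(1+\sup_{t\leq u\leq T}\|Y_u\|_{\L^{2}}^2\bigr)\in\M^1(0,T)$, which is finite because $Y\in\H^{2,d}(t,T)$, and they satisfy item 1 of Assumption~\ref{ass:1-non-lipschitz} via Corollary~\ref{cor:integrable-coefficients} applied to the constant process $X\equiv x$ and $Y$. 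Since $\hat\Phi^{t,\xi}(X,X)=\Phi^{t,\xi}(X,Y)$, Proposition~\ref{prp:fixed-point} applied to $\hat b,\hat h,\hat g$ delivers the corollary as a black box. So the two routes differ genuinely in economy rather than substance: yours is self-contained and makes explicit that only the single Osgood modulus $\rho_1$ matters once $Y$ is frozen, at the cost of repeating the whole Picard--Bihari argument; the paper's route buys a one-line deduction at the cost of verifying Assumption~\ref{ass:1-non-lipschitz} for the frozen coefficients. Both are sound.
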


    \begin{thm}\label{thm:existence-uniqueness-ass-1}
        Let $\xi\in\L^{2,d}(t)$.
        If Assumption~\ref{ass:1-non-lipschitz} is satisfied, then there exists a unique $X\in\H^{2,d}(t,T)$ which solves \eqref{eq:MF-GSDE} with $X_t=\xi$.
        
        If additionally 
        \begin{equation*}
            \gamma(s)=\E\l[K_s\r] + \kappa(s),\qquad t\leq s\leq T,
        \end{equation*}
        is bounded, then the solution $X$ is unique in $\M^{2,d}(t,T)$ and
        \begin{equation*}
            \E\!\l[\sup_{t\leq s\leq T}\l\|X_s\r\|^2 \r]<\infty.
        \end{equation*}
    \end{thm}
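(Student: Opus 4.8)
The plan is to reformulate solvability as a fixed-point problem. Writing \eqref{eq:MF-GSDE} in integral form over $[t,s]$ shows that a process $X\in\H^{2,d}(t,T)$ solves \eqref{eq:MF-GSDE} with $X_t=\xi$ if and only if $X_s=\Phi^{t,\xi}(X)_s$ for all $t\leq s\leq T$, i.e. if and only if $X$ is a fixed point of $X\mapsto\Phi^{t,\xi}(X)=\Phi^{t,\xi}(X,X)$; for $X\in\H^{2,d}(t,T)$ all integrals in $\Phi$ are well defined by the corollary following Lemma~\ref{lem:Phi-growth}. Hence the first assertion is immediate from Proposition~\ref{prp:fixed-point}, which provides a unique fixed point of $\Phi^{t,\xi}$ in $\H^{2,d}(t,T)$. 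Since this solution lies in $\H^{2,d}(t,T)$ by construction, the moment bound $\E[\sup_{t\leq s\leq T}\|X_s\|^2]<\infty$ holds automatically.

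For the second assertion I would upgrade uniqueness from $\H^{2,d}(t,T)$ to $\M^{2,d}(t,T)$ by showing that, when $\gamma$ is bounded, every solution $Y\in\M^{2,d}(t,T)$ already belongs to $\H^{2,d}(t,T)$; uniqueness in $\M^{2,d}(t,T)$ then follows from the uniqueness of the fixed point in $\H^{2,d}(t,T)$. Given such a $Y$ we have $Y=\Phi^{t,\xi}(Y,Y)$, and since $Y\in\M^{2,d}(t,T)\subseteq\M^{1,d}(t,T)$ the hypothesis $\Phi(Y,Y)\in\M^{1,d}(t,T)$ of Lemma~\ref{lem:Phi-growth} is met. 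That lemma gives
\begin{equation*}
    \E\!\l[\sup_{t\leq w\leq s}\l\|Y_w\r\|^2\r]\leq\CK\l(\l\|\xi\r\|_{\L^2}^2+\E\!\l[\int_t^s\kappa(u)\l|Y_u\r|^2+K_u\l(1+\l\|Y_u\r\|_{\L^2}^2\r)\d u\r]\r).
\end{equation*}

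The decisive step is to argue that the right-hand side is finite. Interchanging $\E$ with the time integral by sublinearity and Tonelli bounds the expectation above by $\int_t^s\E[K_u]\d u+\int_t^s\gamma(u)\E[|Y_u|^2]\d u$. Boundedness of $\gamma$, say $\gamma\leq M$, together with $\E[K_u]\leq\gamma(u)$ for all $u$ (as $\kappa\geq0$), then dominates this by $MT+M\|Y\|_{\M^2}^2$, which is finite because $Y\in\M^{2,d}(t,T)$. Thus $\E[\sup_{t\leq s\leq T}\|Y_s\|^2]<\infty$, so $Y\in\H^{2,d}(t,T)$, and the first part forces $Y=X$.

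I expect this finiteness argument to be the only delicate point: for a generic $\M^{2,d}$-solution the quantity $\E[\sup_{t\leq s\leq T}\|Y_s\|^2]$ is not a priori finite, so the estimate must be used to \emph{deduce} its finiteness rather than presuppose it, and it is precisely the boundedness of $\gamma$ that lets $\kappa$ and $u\mapsto\E[K_u]$ be pulled out of the time integral and controlled by the finite $\M^2$-norm of $Y$. Without this boundedness the product $\kappa(u)\E[|Y_u|^2]$ need not be integrable, so the passage to the larger space $\M^{2,d}(t,T)$ would break down.
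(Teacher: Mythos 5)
Your proposal is correct and follows essentially the same route as the paper: reformulate solvability as the fixed-point equation $X=\Phi^{t,\xi}(X,X)$, invoke Proposition~\ref{prp:fixed-point} for existence and uniqueness in $\H^{2,d}(t,T)$, and, for bounded $\gamma$, use Lemma~\ref{lem:Phi-growth} (whose hypothesis is met because any $\M^{2,d}$-solution equals $\Phi^{t,\xi}(X,X)$ and hence lies in $\M^{1,d}$) to show every $\M^{2,d}$-solution already lies in $\H^{2,d}$. The only cosmetic difference is that you bound $\int_t^s\E[K_u]\,\d u$ by $MT$ via $\E[K_u]\le\gamma(u)\le M$, whereas the paper bounds it by $\|K\|_{\M^1}$; both are finite, so the arguments coincide in substance.
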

    \begin{proof}
        From the definition of $\Phi^{t,\xi}(X,Y)$ in \eqref{eq:Phi}, we deduce that $X\in\M^2(t,T)$ is a solution of \eqref{eq:MF-GSDE} with $X_t=\xi$ if, and only if, $X=\Phi^{t,\xi}(X,X)$.

        Suppose $X\in\M^2(t,T)$ satisfies $X=\Phi^{t,\xi}(X,X)$, then $\Phi^{t,\xi}(X,X)=X\in\M^2(t,T)\subseteq \M^1(t,T)$.
        Set $M:=\sup_{t\leq s\leq T}\gamma(s)$.
        Lemma~\ref{lem:Phi-growth} yields that
        \begin{align*}
            \E\!\l[ \sup_{t\leq s\leq T}\l|X_s\r|^2 \r] 
            &= 
            \E\!\l[ \sup_{t\leq s\leq T}\l|\Phi^{t,\xi}(X,X)_s\r|^2 \r]
            \\&\leq 
            \CK \l( \l\|\xi\r\|_{\L^2}^2 + \l\|K\r\|_{\M^1} + \int_t^T \gamma(u) \E\l[ \l|X_u\r|^2 \r] \d u \r)
            \\&\leq 
            \CK \l( \l\|\xi\r\|_{\L^2}^2 + \l\|K\r\|_{\M^1} + M \l\|X\r\|_{\M^2}^2 \r) < \infty.
        \end{align*}
        That is, $X\in\H^2(t,T)$.
        By Proposition~\ref{prp:fixed-point}, there exists a unique $X\in\H^2(t,T)$ with $X=\Phi^{t,\xi}(X)=\Phi^{t,\xi}(X,X)$, which implies the desired result.
    \end{proof}
    
    \section{Comparison to existing Literature}\label{sec:discussion}
    In this section, we will discuss how our results relate to the existing literature on mean-field $G$-SDEs. 
    In particular, we show that Theorem~\ref{thm:existence-uniqueness-ass-1} generalises the existing results in the literature.

    First, let us show that the $G$-SDE considered in \cite{sun_distribution_2023} is a special case of the mean-field $G$-SDE \eqref{eq:MF-GSDE}.
    For the sake of convenience, we recall the definitions from Section 3 in \cite{sun_distribution_2023}.
    Let $\CD$ denote the space of all functionals $F:\,\textnormal{Lip}(\Rbb^d)\rightarrow \Rbb$ which are monotonous, positive homogeneous, sub-additive, constant-invariant and such that
    \begin{equation*}
        \sup_{L_\phi \leq 1} \l|F_1(\phi) - \phi(0)\r| <\infty,
    \end{equation*}
    where the supremum ranges over all Lipschitz continuous $\phi:\,\Rbb^d\rightarrow \Rbb$ with Lipschitz constant $L_\phi\leq 1$.
    Moreover, define the metric $d_1$ on $\CD$ by
    \begin{equation*}
        d_1(F_1,F_2):=\sup_{L_\phi \leq 1} \l|F_1(\phi) - F_2(\phi)\r|, \qquad f,g\in\CD.
    \end{equation*}
    For any $\xi\in\L^{1,d}$, define the functional
    \begin{equation*}
        F_\xi:\; \textnormal{Lip}(\Rbb^d) \rightarrow\Rbb,\qquad\phi \mapsto \E\!\l[ \phi(\xi)\r].
    \end{equation*}
    Clearly, $F_\xi\in\CD$, cf. also Remark 3.2 in \cite{sun_distribution_2023}.
    Moreover, we have for any $\xi,\eta\in\L^{1,d}$ that
    \begin{align*}
        d_1(F_\xi,F_\eta) 
        &= 
        \sup_{L_\phi \leq 1} \l|F_\xi(\phi) - F_\eta(\phi)\r|
        \\&=
        \sup_{L_\phi \leq 1} \l| \E\!\l[ \phi(\xi)\r] - \E\!\l[ \phi(\eta)\r] \r|
        \\&\leq 
        \sup_{L_\phi \leq 1}  \E\!\l[ \l| \phi(\xi) - \phi(\eta) \r| \r] 
        \\&\leq 
        \sup_{L_\phi \leq 1}  L_\phi\, \E\!\l[ \l| \xi - \eta \r| \r] 
        \\&=
        \l\|\xi-\eta\r\|_{\L^{1}}
        .\numberthis\label{ineq:d1-L1-bound}
    \end{align*}
    
    In \cite{sun_distribution_2023}, the authors consider dynamics of the form
    \begin{align*}
        \d X_s 
        &= 
        b\!\l(s,X_s,F_{X_s}\r) \d s 
        + h\!\l(s,X_s,F_{X_s}\r) \d \l<B\r>_s 
        + g\!\l(s,X_s,F_{X_s}\r) \d B_s
        .\numberthis\label{eq:SDE-sun-1}
    \end{align*}
    The coefficients $\Tilde{b},\Tilde{h},\Tilde{g}$ are defined on $[0,T]\times\Rbb^d\times\CD$ such that the components $\tilde f=\Tilde{b}_k,\tilde h_{kij}, \Tilde{g}_{ki}$ with $1\leq i,j\leq n$, $1\leq k\leq d$ satisfy
    \begin{equation*}
        | \tilde f(t,x,F_1)- \tilde f(t,y,F_2) | \leq K\l(\l\|x-y\r\| + d_1(F_1,F_2)\r)
    \end{equation*}
    for some constant $K>0$, cf. (H1) in \cite{sun_distribution_2023}.
    
    Now, let us define $b,g,h$ on $[0,T]\times\Rbb^d\times \L^{2,d}\times \Omega $ by
    \begin{align*}
        b(s,x,\xi,\omega)&:=\tilde{b}(s,x,F_{\xi}) \\
        h(s,x,\xi,\omega)&:=\tilde{h}(s,x,F_{\xi}) \\
        g(s,x,\xi,\omega)&:=\tilde{g}(s,x,F_{\xi})
    \end{align*}
    for all $0\leq s\leq T$, $x\in\Rbb^d$, $\xi\in\L^{2,d}$, $\omega\in\Omega$.
    
    Note that the coefficients $b,h,g$ are deterministic. 
    For the components $f=b_k,h_{kij},g_{ki}$, $1\leq i,j\leq n$, $1\leq k\leq d$, we have that
    \begin{align*}
        \l| f(t,x,\xi,\omega)-f(t,y,\eta,\omega) \r|
        &= 
        | \tilde f(t,x,F_\xi)-\tilde f(t,y,F_\eta) |
        \\ &\leq 
        K\l(\l\|x-y\r\| + d_1(F_\xi,F_\eta)\r)
        \\ &\leq 
        K\l(\l\|x-y\r\| + \E\!\l[ \l| \xi - \eta \r| \r] \r)
    \end{align*}
    for all $\omega\in\Omega$, $0\leq s\leq T$, $x,y\in\Rbb^d$ and $\xi,\eta\in\L^{2,d}$
    due to \eqref{ineq:d1-L1-bound}.
    In particular, Jensen's inequality yields 
    \begin{align*}
        \l| f(t,x,\xi,\omega)-f(t,y,\eta,\omega) \r|^2 
        &\leq K^2 \l(\l\|x-y\r\| + \E\!\l[ \l| \xi - \eta \r| \r] \r)^2
        \\&\leq 2\,K^2 \l(\l\|x-y\r\|^2 + \E\!\l[ \l| \xi - \eta \r|^2 \r] \r)
        \\&= 2\,K^2 \l(\l\|x-y\r\|^2 + \l\|\xi-\eta\r\|_{\L^{2}}^2 \r)
        ,
    \end{align*}
    i.e., the coefficients $b,h,g$ satisfy \eqref{ineq:ass-1-continuity} from Assumption \ref{ass:1-non-lipschitz}. 
    Thus, Theorem~\ref{thm:existence-uniqueness-ass-1} implies Theorem~4.1 in \cite{sun_distribution_2023}.

    In \cite{sun_mean-field_2020}, the author considers the $G$-SDE
    \begin{align*}
        \d X_s & =\E\!\l[\Tilde{b}\l(s,x,X_s\r)\r]\bigg|_{x=X_s} \d s + \sum_{i,j=1}^n \E\!\l[\Tilde{h}_{ij}\!\l(s,x,X_s\r)\r]\bigg|_{x=X_s} \d\l<B^i,B^j\r>_s \\
        &\quad + \sum_{i=1}^n\E\Big[\Tilde{g}_{i}\!\l(s,x,X_s\r)\Big]\bigg|_{x=X_s} \d B^i_s,
    \end{align*}
    where the functions $\Tilde{b},\Tilde{h}_{ij},\Tilde{g}_i$, $1\leq i,j\leq n$ are defined on $[0,T]\times\Rbb\times\Rbb$.
    That is,
    \begin{equation*}
        \E\l[ \Tilde{f}(s,x,X_s) \r]\Big|_{x=X_s} = F_{X_s} \Tilde{f}(s,x,\cdot)\Big|_{x=X_s}=:f(s,X_s,F_{X_s})
    \end{equation*}
    for $\Tilde{f}=\Tilde{b},\Tilde{h}_{ij},\Tilde{g}_i$, $1\leq i,j\leq n$.
    Thus, it is a special case of \eqref{eq:SDE-sun-1} and thus also of \eqref{eq:MF-GSDE} with $d=1$.
    
    \appendix

    \section{Auxiliary Results}\label{app:auxiliary_results}
    
    \begin{lem}\label{lem:A1}
        Let $0\leq t\leq T$, $\eta\in\F{t}$, and $X\in\S(0,T)$. 
        Then $X\eta\ind_{[t,T]}\in\S(0,T)$.
    \end{lem}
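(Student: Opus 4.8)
The plan is to exhibit $X\eta\ind_{[t,T]}$ explicitly as a step process of the form required in the definition of $\S(0,T)$, obtained by refining the time grid of $X$ so that $t$ itself becomes one of its nodes. First I would take the given representation $X=\sum_{k=0}^{m-1}\xi_k\ind_{[t_k,t_{k+1})}$ with $\xi_k\in\F{t_k}$, and pass to the refined partition whose nodes are $\{t_0,\ldots,t_m\}\cup\{t\}$, written $0=r_0<\cdots<r_N=T$. On each subinterval $[r_j,r_{j+1})$ the process $X$ is constant, equal to $\xi_{k(j)}$ for the index $k(j)$ with $[r_j,r_{j+1})\subseteq[t_{k(j)},t_{k(j)+1})$; since $t_{k(j)}\le r_j$ gives $\CF_{t_{k(j)}}\subseteq\CF_{r_j}$, the coefficient $\xi_{k(j)}$ is bounded and $\CF_{r_j}$-measurable, i.e.\ $\xi_{k(j)}\in\F{r_j}$, so this refined representation still belongs to $\S(0,T)$.

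Next I would multiply by $\eta\ind_{[t,T]}$. Let $j_0$ be the index with $r_{j_0}=t$. Then $\ind_{[t,T]}$ equals $1$ on $[r_j,r_{j+1})$ for $j\ge j_0$ and $0$ otherwise, so
\begin{equation*}
    X\eta\ind_{[t,T]}=\sum_{j=j_0}^{N-1}\bigl(\xi_{k(j)}\,\eta\bigr)\ind_{[r_j,r_{j+1})}.
\end{equation*}
It then remains to check that each coefficient $\xi_{k(j)}\eta$ lies in $\F{r_j}$. Because $\F{r_j}$ is an algebra (a product of bounded $\CF_{r_j}$-measurable functions is again bounded and $\CF_{r_j}$-measurable), and because $t=r_{j_0}\le r_j$ yields $\CF_t\subseteq\CF_{r_j}$ and hence $\eta\in\F{t}\subseteq\F{r_j}$, the product $\xi_{k(j)}\eta$ is again in $\F{r_j}$. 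This displays $X\eta\ind_{[t,T]}$ in exactly the required form and finishes the argument.

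There is no genuine obstacle here: the argument is essentially bookkeeping, with the conceptual heart being that, once $t$ is promoted to a grid point, the measurability fact $\eta\in\F{t}\subseteq\F{r_j}$ for $r_j\ge t$ is immediate from monotonicity of the filtration. The only points warranting a moment's care are the degenerate cases: for $t=0$ one has $\ind_{[t,T]}\equiv1$ and $\eta\in\F{0}\subseteq\F{t_k}$, while for $t=T$ the sum above is empty and the product vanishes since a step process satisfies $X_T=0$. For the same reason the closed right endpoint in $\ind_{[t,T]}$ is harmless, as every element of $\S(0,T)$ already takes the value $0$ at $s=T$.
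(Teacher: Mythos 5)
Your proof is correct and takes essentially the same route as the paper's: the paper likewise promotes $t$ to a grid point by fixing $l$ with $t_l\leq t<t_{l+1}$ and writing $X\eta\ind_{[t,T]}=\xi_l\eta\ind_{[t,t_{l+1})}+\sum_{k=l+1}^{N-1}\xi_k\eta\ind_{[t_k,t_{k+1})}$, then uses monotonicity of the filtration and closedness of $\F{s}$ under multiplication to place each coefficient in the right space. Your additional remarks on the degenerate cases $t=0$, $t=T$ and the closed endpoint are harmless refinements of the same bookkeeping.
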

    \begin{proof}
        Since $X\in\S(0,T)$, it is of the form
        \begin{equation*}
            X=\sum_{k=0}^{N-1}\xi_k\ind_{[t_k,t_{k+1})}
        \end{equation*}
        for some $N\in\Nbb$, $0=t_0<\ldots<t_N=T$ and $\xi_k\in\F{t_k}$, $0\leq k\leq N-1$.
        Fix $0\leq l\leq N$ such that $t_l \leq  t < t_{l+1}$.
        For each $l \leq  k\leq N-1$, we have $\F{t}\subseteq\F{t_k}$ and $\F{t_k}$ is closed under multiplication, i.e., $\xi_k\eta\in\F{t_k}$.
        Hence,
        \begin{equation*}
            X\eta \ind_{[t,T]} =\xi_l\eta\ind_{[t,t_{l+1})} + \sum_{k=l+1}^{N-1}\xi_k\eta \ind_{[t_k,t_{k+1})} \in\S(0,T),
        \end{equation*}
        as desired.
    \end{proof}
    
    \begin{cor}\label{cor:A2}
        Let $0\leq t\leq T$, $\eta\in\F{t}$, and $X\in\M^p(0,T)$ $p\geq 1$. 
        Then $X\eta \ind_{[t,T]}\in\M^p(0,T)$.
    \end{cor}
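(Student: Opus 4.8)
The plan is to lift Lemma~\ref{lem:A1} from simple processes to the completion $\M^p(0,T)$ by an approximation argument, exploiting that multiplication by $\eta\ind_{[t,T]}$ acts as a bounded linear operator with respect to $\|\cdot\|_{\M^p}$.

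First I would note that, since $\eta\in\F{t}$ is by definition a \emph{bounded} $\CF_t$-measurable function, there is a constant $M<\infty$ with $|\eta(\omega)|\leq M$ for every $\omega\in\Omega$. Hence for any process $Z$ one has the pointwise bound $|Z_s\,\eta\,\ind_{[t,T]}(s)|^p\leq M^p\,|Z_s|^p$, and the monotonicity and positive homogeneity of the sublinear expectation $\E$ give $\E[|Z_s\eta\ind_{[t,T]}(s)|^p]\leq M^p\,\E[|Z_s|^p]$ for each $s$.

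Next I would invoke the defining property of the completion: as $X\in\M^p(0,T)$, there is a sequence $(X^m)_{m\in\Nbb}$ in $\S(0,T)$ with $\|X^m-X\|_{\M^p}\to 0$. By Lemma~\ref{lem:A1} each $X^m\eta\ind_{[t,T]}$ again lies in $\S(0,T)$, and integrating the pointwise estimate above over $[0,T]$ yields
\begin{equation*}
    \|X^m\eta\ind_{[t,T]}-X^k\eta\ind_{[t,T]}\|_{\M^p}^p
    =\int_0^T \E\!\l[\,|(X^m_s-X^k_s)\eta\ind_{[t,T]}(s)|^p\,\r]\d s
    \leq M^p\,\|X^m-X^k\|_{\M^p}^p .
\end{equation*}
Thus $(X^m\eta\ind_{[t,T]})_{m\in\Nbb}$ is Cauchy in $\M^p(0,T)$ because $(X^m)_m$ is. Since $\M^p(0,T)$ is complete, the sequence converges, and the same estimate with $X^k$ replaced by $X$ shows that its limit is $X\eta\ind_{[t,T]}$, with $\|X^m\eta\ind_{[t,T]}-X\eta\ind_{[t,T]}\|_{\M^p}\leq M\,\|X^m-X\|_{\M^p}\to 0$. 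Therefore $X\eta\ind_{[t,T]}\in\M^p(0,T)$.

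Equivalently, one can phrase the argument operator-theoretically: the map $Z\mapsto Z\eta\ind_{[t,T]}$ is linear on $\S(0,T)$ and bounded with operator norm at most $M$ by the displayed inequality, so it extends continuously to all of $\M^p(0,T)$, its value at $X$ being $X\eta\ind_{[t,T]}$; this formulation has the advantage of not requiring one to fix a pointwise representative of $X$ in advance. I do not expect a serious obstacle here; the only point needing a little care is that $\eta$ is genuinely (not merely quasi-surely) bounded, which is exactly what membership in $\F{t}$ provides and what makes the pointwise monotonicity estimate for $\E$ legitimate.
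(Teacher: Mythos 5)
Your proof is correct and follows essentially the same route as the paper's: approximate $X$ by simple processes $X^m$, apply Lemma~\ref{lem:A1} to see that each $X^m\eta\ind_{[t,T]}$ is simple, use the uniform bound $|\eta|\leq M$ to get $\|X^m\eta\ind_{[t,T]}-X\eta\ind_{[t,T]}\|_{\M^p}\leq M\|X^m-X\|_{\M^p}\to 0$, and conclude by completeness of $\M^p(0,T)$. The intermediate Cauchy estimate and the operator-theoretic reformulation are fine but not needed beyond what the paper already does.
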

    \begin{proof}
        Since $X\in\M^1(0,T)$ there exists a sequence $(X^m)_{m\in\Nbb}$ in $\S(0,T)$ such that 
        \begin{equation*}
            \lim_{m\rightarrow \infty} \l\|X^m-X\r\|_{\M^p}=0.
        \end{equation*}
        By Lemma~\ref{lem:A1}, we have $X^m\eta\ind_{[t,T]}\in\S(0,T)$ for all $m\in\Nbb$.
        Since $\eta\in\F{t}$, there exists a constant $M>0$ such that $\l|\eta\r|<M$.
        Thus,
        \begin{equation*}
            \lim_{m\rightarrow \infty} \l\|X^m\eta \ind_{[t,T]}-X\eta\ind_{[t,T]} \r\|_{\M^p}\leq \lim_{m\rightarrow \infty} M \l\|X^m-X\r\|_{\M^p}= 0.
        \end{equation*}
        Since $\M^p(0,T)$ is the completion of $\S(0,T)$ with respect to the $\M^p$-norm, we obtain that $X\eta\ind_{[t,T]}\in\M^p(0,T)$.
    \end{proof}
    \begin{cor}\label{cor:A3}
        Let $X\in\S(0,T)$ and $Y\in\M^p(0,T)$. Then $XY\in\M^p(0,T)$.
    \end{cor}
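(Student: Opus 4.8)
The plan is to reduce the statement to Corollary~\ref{cor:A2} by exploiting linearity. Since $X\in\S(0,T)$, it has the form $X=\sum_{k=0}^{N-1}\xi_k\ind_{[t_k,t_{k+1})}$ for some partition $0=t_0<\ldots<t_N=T$ and $\xi_k\in\F{t_k}$. Multiplying by $Y$ gives $XY=\sum_{k=0}^{N-1}\xi_k\,Y\,\ind_{[t_k,t_{k+1})}$, so it suffices to show that each summand $\xi_k Y\ind_{[t_k,t_{k+1})}$ lies in $\M^p(0,T)$; the conclusion then follows because $\M^p(0,T)$, being a completion with respect to a norm, is a vector space and hence closed under finite sums.

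For a single summand I would rewrite the indicator of the half-open block as a difference of indicators of right-closed tail intervals, namely $\ind_{[t_k,t_{k+1})}=\ind_{[t_k,T]}-\ind_{[t_{k+1},T]}$, which holds pointwise in $s$ except possibly on the Lebesgue-null set $\{t_{k+1},T\}$ and therefore as an identity in $\M^p(0,T)$. Consequently $\xi_k Y\ind_{[t_k,t_{k+1})}=Y\xi_k\ind_{[t_k,T]}-Y\xi_k\ind_{[t_{k+1},T]}$. Since $\xi_k\in\F{t_k}$ and $\CF_{t_k}\subseteq\CF_{t_{k+1}}$, we also have $\xi_k\in\F{t_{k+1}}$, so Corollary~\ref{cor:A2} applies to both terms (with $\eta=\xi_k$ and $t=t_k$ respectively $t=t_{k+1}$), yielding $Y\xi_k\ind_{[t_k,T]}\in\M^p(0,T)$ and $Y\xi_k\ind_{[t_{k+1},T]}\in\M^p(0,T)$. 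Their difference is thus in $\M^p(0,T)$, which completes the argument for each block and hence for $XY$.

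There is no genuine obstacle here; the only point requiring care is the bookkeeping with interval endpoints. In particular, for the last block $k=N-1$ one has $\ind_{[t_{N-1},T)}$ rather than $\ind_{[t_{N-1},T]}$, but the two differ only at $s=T$, so the $\M^p$-equivalence is unaffected. Likewise one must confirm the measurability inclusion $\xi_k\in\F{t_{k+1}}$ invoked above, which is immediate from monotonicity of the filtration together with the boundedness of $\xi_k$.
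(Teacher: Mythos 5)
Your proof is correct and follows essentially the same route as the paper: decompose $X$ into its simple-process blocks, apply Corollary~\ref{cor:A2} to each term $\xi_k Y$, and conclude since $\M^p(0,T)$ is closed under finite sums. Your additional step writing $\ind_{[t_k,t_{k+1})}=\ind_{[t_k,T]}-\ind_{[t_{k+1},T]}$ actually patches a detail the paper glosses over, since Corollary~\ref{cor:A2} is stated for tail indicators $\ind_{[t,T]}$ rather than for block indicators $\ind_{[t_k,t_{k+1})}$, so your version is the more careful one.
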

    \begin{proof}
        Since $X\in\S(0,T)$, it is of the form
        \begin{equation*}
            X=\sum_{k=0}^{N-1}\xi_k\ind_{[t_k,t_{k+1})}
        \end{equation*}
        for some $N\in\Nbb$, $0=t_0<\ldots<t_N=T$ and $\xi_k\in\F{t_k}$, $0\leq k\leq N-1$.
        We have
        \begin{equation*}
            XY=\sum_{k=0}^{N-1}\xi_kY\ind_{[t_k,t_{k+1})}.
        \end{equation*}
        Corollary~\ref{cor:A2} implies $\xi_kY\ind_{[t_k,t_{k+1})}\in\M^p(0,T)$, and thus $XY\in\M^p(0,T)$ as finite sum of elements in $\M^p(0,T)$.
    \end{proof}
    
    \begin{lem}\label{lem:A3}
        Let $(E,d_E)$ be a metric space, and $\xi:\,\Omega\rightarrow E$ be $\CF_t$-measurable and such that $\textnormal{image } \xi \subseteq K$ for some compact $K\subseteq E$. 
        Moreover, let $p\geq 1$ and $f:\,[0,T]\times E\times\Omega \rightarrow \Rbb$ be such that $f(\cdot,x)\in\M^p(0,T)$ for all $x\in E$.
        If there exist a constant $\delta>0$, a process $M\in\M^1(t,T)$ and an increasing function $\rho:\,\Rbb_+\rightarrow\Rbb_+$ with $\lim_{\epsilon\downarrow 0}\rho(\epsilon)=0$ such that
        \begin{equation*}
            \l| f(s,x,\omega) - f(s,y,\omega) \r|^p \leq M_s\!\l(\omega\r)\rho(d_E(x,y))
            \numberthis\label{ineq:A3-0}
        \end{equation*}
        for all $x,y\in E$ with $d_E(x,y)<\delta$, quasi all $\omega\in\Omega$, and almost all $0\leq s\leq T$;
        then $f(\cdot,\eta)\ind_{[t,T]}\in\M^p(0,T)$.
    \end{lem}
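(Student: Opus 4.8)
The plan is to approximate $\xi$ by finitely-valued $\CF_t$-measurable random variables, using the compactness of $K$, and to leverage that $f(\cdot,x)\in\M^p(0,T)$ for each fixed $x\in E$. Fix a radius $r\in(0,\delta)$ and write $\CB(x,r):=\{y\in E\,:\,d_E(x,y)<r\}$ for the open ball. Since the image of $\xi$ is contained in the compact set $K$, the balls $\{\CB(x,r)\,:\,x\in K\}$ cover $K$ and admit a finite subcover $\CB(x_1,r),\dots,\CB(x_N,r)$. Disjointifying, set $A_1:=\xi^{-1}(\CB(x_1,r))$ and $A_k:=\xi^{-1}(\CB(x_k,r))\setminus\bigcup_{j<k}A_j$ for $2\le k\le N$. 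As $\xi$ is $\CF_t$-measurable and the balls are Borel, each $A_k\in\CF_t$; the $A_k$ are pairwise disjoint, cover $\Omega$, and satisfy $d_E(\xi(\omega),x_k)<r$ for every $\omega\in A_k$.

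Next I would form the simple approximant $g^r:=\sum_{k=1}^N f(\cdot,x_k)\ind_{A_k}$. Each $\ind_{A_k}$ is bounded and $\CF_t$-measurable, hence lies in $\F{t}$, while $f(\cdot,x_k)\in\M^p(0,T)$ by hypothesis. Corollary~\ref{cor:A2} therefore yields $f(\cdot,x_k)\ind_{A_k}\ind_{[t,T]}\in\M^p(0,T)$ for each $k$, and summing over $k$ shows $g^r\ind_{[t,T]}\in\M^p(0,T)$.

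The core estimate controls the $\M^p$-distance between $g^r\ind_{[t,T]}$ and $f(\cdot,\xi)\ind_{[t,T]}$. Since the $A_k$ partition $\Omega$, for a.a.\ $s$ and quasi every $\omega$ the local continuity bound \eqref{ineq:A3-0}, together with monotonicity of $\rho$ and $d_E(\xi(\omega),x_k)<r<\delta$ on $A_k$, gives
\begin{align*}
\l|f(s,\xi(\omega),\omega)-g^r(s,\omega)\r|^p
&=\sum_{k=1}^N\ind_{A_k}(\omega)\,\l|f(s,\xi(\omega),\omega)-f(s,x_k,\omega)\r|^p\\
&\leq\sum_{k=1}^N\ind_{A_k}(\omega)\,M_s(\omega)\,\rho\!\l(d_E(\xi(\omega),x_k)\r)\leq M_s(\omega)\,\rho(r).
\end{align*}
Applying $\E$ and integrating over $[t,T]$, and recalling $M\in\M^1(t,T)$, I obtain
\begin{equation*}
\l\|f(\cdot,\xi)\ind_{[t,T]}-g^r\ind_{[t,T]}\r\|_{\M^p}^p
=\int_t^T\E\!\l[\l|f(s,\xi)-g^r(s)\r|^p\r]\d s
\leq\rho(r)\int_t^T\E\!\l[M_s\r]\d s
\leq\rho(r)\,\l\|M\ind_{[t,T)}\r\|_{\M^1}.
\end{equation*}

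Finally, taking $r_m\downarrow0$ makes $\rho(r_m)\to0$, so the simple processes $g^{r_m}\ind_{[t,T]}\in\M^p(0,T)$ form a Cauchy sequence with respect to $\|\cdot\|_{\M^p}$ whose limit is $f(\cdot,\xi)\ind_{[t,T]}$; since $\M^p(0,T)$ is complete, being the completion of $\S(0,T)$, the limit $f(\cdot,\xi)\ind_{[t,T]}$ lies in $\M^p(0,T)$, as claimed. The step I expect to demand the most care is the joint (Carathéodory) measurability needed to make sense of $\omega\mapsto f(s,\xi(\omega),\omega)$ and to justify applying $\E$ to $|f(s,\xi)-g^r(s)|^p$; this is precisely where the local modulus bound \eqref{ineq:A3-0}—continuity in $x$ off a polar set together with measurability in $\omega$—does the essential work, and it is also the mechanism by which the mere $\M^1$-integrability of $M$ suffices to control the $\M^p$-distance once the increment is raised to the $p$-th power.
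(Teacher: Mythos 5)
Your proof is correct, and it follows the same overall architecture as the paper's proof --- finite cover of the compact $K$ by small balls, an approximant of the form $\sum_k f(\cdot,x_k)\,w_k\,\ind_{[t,T]}$ with weights $w_k\in\F{t}$ built from $\xi$, Corollary~\ref{cor:A2} to place each summand in $\M^p(0,T)$, the uniform error bound $\rho(\epsilon)\,\|M\|_{\M^1}$ coming from \eqref{ineq:A3-0}, and completeness of $\M^p(0,T)$ --- but with a genuinely different choice of weights. The paper takes a continuous partition of unity $\{\psi^\epsilon_y\}$ subordinate to the cover, so its weights $\psi^\epsilon_y(\xi)$ overlap and the error estimate needs a Jensen/convexity step before \eqref{ineq:A3-0} can be applied term by term; you instead disjointify the cover into $\CF_t$-measurable sets $A_k$ and use the indicator weights $\ind_{A_k}$, so that your approximant $g^r$ equals $f(\cdot,x_k)$ exactly on $A_k$ and the pointwise bound $|f(s,\xi)-g^r(s)|^p\le M_s\,\rho(r)$ is immediate, with no convexity argument. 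Your route is more elementary, but it is worth seeing what makes it admissible and what the paper's device buys instead: $\ind_{A_k}$ qualifies as an element of $\F{t}$ only because this paper's generalised setting defines $\F{t}$ as \emph{all} bounded $\CF_t$-measurable functions; in the classical quasi-continuous $G$-setting of Denis--Hu--Peng, indicators of Borel sets are in general not quasi-continuous and would be inadmissible, whereas the paper's weights, being continuous functions composed with $\xi$, are the form of the argument that survives in that more restrictive framework. The residual subtlety you flag at the end --- making sense of $\omega\mapsto f(s,\xi(\omega),\omega)$ as a measurable object and applying $\E$ to it before it is known to lie in any completed space --- is present in the paper's proof in exactly the same way, so your proposal is not missing anything the paper supplies.
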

    \begin{proof}
        Let $0<\epsilon<\delta$, and let $B_\epsilon(y)$ denote the open $\epsilon$-ball centered at $y\in E$.
        Then $\{B_\epsilon(y)\}_{y\in K}$ is an open cover of $K$.
        Since $K$ is compact, we can fix a finite $I\subseteq K$ such that $\{B_\epsilon(y)\}_{y\in I}$ is an open cover of $K$.
        Moreover, there exists a partition of unity $\{\psi^\epsilon_y\}_{y\in I}$ subordinate to $\{B_\epsilon(y)\}_{y\in I}$.
        That is, $\psi^\epsilon_y:\,E\rightarrow [0,1]$ is continuous with $\textnormal{supp }\psi^\epsilon_y \subseteq B_\epsilon(y)$ for each $y\in I$, and $\sum_{y\in I}\psi^\epsilon_y(x)=1$ for all $x\in K$.

        Define the function $f^\epsilon_\eta:\,[0,T]\times \Omega\rightarrow\Rbb$ by
        \begin{equation*}\label{eq:A3-1}
            f^\epsilon_\eta
            :=\sum_{y\in I}f\!\l(\cdot,y\r)\psi^\epsilon_y\!\l(\eta\r)\ind_{[t,T]}
            .
        \end{equation*}
        For each $y\in I$, the concatenation $\psi^\epsilon_y(\eta):\,\Omega\rightarrow [0,1]$ is $\CF_t$-measurable and bounded, i.e., $\psi^\epsilon_y(\eta)\in\F{t}$.
        Further, $f(\cdot,y)\in\M^p(0,T)$ implies $f\!\l(\cdot,y\r)\psi^\epsilon_y\!\l(\eta\r)\ind_{[t,T]}\in\M^p(0,T)$ due to Corollary~\ref{cor:A2}.
        Thus, $f^\epsilon_\eta\in\M^p(0,T)$ as finite sum of elements in $\M^p(0,T)$.
        
        For $t\leq s\leq T$, we have
        \begin{align*}
            \E\!\l[\l| f(s,\eta) - f^\epsilon_\eta(s) \r|^p \r]
            &=
            \E\!\l[\l| f(s,\eta) - \sum_{y\in I}f(s,y)\,\psi^\epsilon_y(\eta) \r|^p  \r]
            \\&=
            \E\!\l[\l| \sum_{y\in I} \l(f(s,\eta) - f(s,y)\r)\psi^\epsilon_y(\eta) \r|^p \r]
            \\&\leq 
            \E\!\l[ \sum_{y\in I} \l| f(s,\eta) - f(s,y)\r|^p \psi^\epsilon_y(\eta) \r]
            .\numberthis \label{ineq:A3-2}
        \end{align*}
        There exists a polar set $N\subseteq \Omega$ such that \eqref{ineq:A3-0} holds for all $\omega\in\Omega\setminus N$, almost every $0\leq s\leq T$ and all $x,y\in E$ with $d_E(x,y)<\delta$. 
        By construction, $\psi^\epsilon_y(\eta)=0$ on $\{\eta\notin B_\epsilon(y)\}$.
        Further,
        \begin{equation*}
            \l| f(s,\eta) - f(s,y)\r|^p
            \leq 
            M_s\,\rho\!\l(d_E(\eta,y)\r)
            <
            M_s\, \rho\!\l(\epsilon\r)\qquad\text{on }\l\{\eta\in B_\epsilon(y)\r\}\setminus N
        \end{equation*} 
        for almost every $t\leq s\leq T$.
        Hence,
        \begin{equation*}
            \l| f(s,\eta) - f(s,y)\r|^p \psi^\epsilon_y(\eta) < M_s\,\rho\!\l(\epsilon\r) \psi^\epsilon_y(\eta) \qquad \text{on }\Omega\setminus N
            \label{ineq:A3-3}
        \end{equation*}
        for almost every $t\leq s\leq T$.
        Since $N$ is polar, we deduce from \eqref{ineq:A3-2} that
        \begin{align*}
            \l\| f(s,\eta)\ind_{[t,T]} - f^\epsilon_\eta(s) \r\|_{\M^p}^p
            &=
            \int_t^T \E\!\l[\l| f(s,\eta) - f^\epsilon_\eta(s) \r|^p \r] \d s 
            \\&\leq 
            \int_t^T \E\!\l[\sum_{y\in I} \l| f(s,\eta) - f(s,y)\r|^p \psi^\epsilon_y(\eta)  \r] \d s
            \\&< 
            \int_t^T  \E\!\l[\sum_{y\in I} M_s\,\rho\!\l(\epsilon\r) \psi^\epsilon_y(\eta) \r] \d s 
            \\&=
            \int_t^T \E\!\l[ M_s \r] \d s\; \rho\!\l(\epsilon\r)
            \\&\leq 
            \l\|M\r\|_{\M^1}\rho\!\l(\epsilon\r)
            .
        \end{align*}
        Since $\lim_{\epsilon\downarrow 0}\rho(\epsilon)=0$, we have $f^\epsilon_\eta\rightarrow f(\cdot,\eta)$ with respect to the $\M^p$-norm as $\epsilon\downarrow 0$.
        Finally, we obtain $f(\cdot,\eta)\ind_{[t,T]}\in\M^p(0,T)$ because $\M^p(0,T)$ is complete.
    \end{proof}
    
    \section{Proofs to Results in Section~\ref{sec:G-setting}}\label{app:proofs_gsetting}
    
    \begin{proof}[Proof of Lemma~\ref{lem:Lp-sum-Mp}]
        For each $0\leq k\leq N-1$, there exists a sequence $(\xi_k^m)_{m\in\Nbb}$ in $\F{t_k}$ with
        \begin{equation*}
            \lim_{m\rightarrow\infty}\E\l[ \l|\xi_k-\xi_k^m\r|^p\r] =0.
        \end{equation*}
        For each $m\in\Nbb$, we have
        \begin{equation*}
            X^m:=\sum_{k=0}^{N-1} \xi_k^m\ind_{\l[t_k,t_{k+1}\r)}\in\S(t,T).
        \end{equation*}
        Further,
        \begin{align*}
            \lim_{m\rightarrow\infty} \int_t^T\E\l[ \l| X_s - X_s^m \r|^p \r] \d s
            &=
            \lim_{m\rightarrow\infty}\sum_{k=0}^{N-1} \E\l[  \l| \xi_k - \xi_k^m \r|^p  \r]\l( t_{k+1}-t_k\r)
            =0.
        \end{align*}
        Hence, $X^m\in\S(t,T)$ converges to $X$ with respect to $\l\|\cdot\r\|_{\M^p}$ as $m\rightarrow \infty$. 
        Thus $X\in\M^p(t,T)$ since $\M^p(t,T)$ the completion of $\S(t,T)$.
    \end{proof}

    \begin{proof}[Proof of Lemma~\ref{lem:B-M2}]
        For $m\in\Nbb$, set $t^m_k:=\frac{k\,T}{m}$ and define
        \begin{equation*}
            B^{a,m}:=\sum_{k=1}^{m-1} B^a_{t^m_k} \ind_{[t^m_k,t^m_{k+1})}.
        \end{equation*}
        Clearly, $B^a_{t^m_k}\in\L^2(t^m_k)$ for $0\leq k\leq m-1$ and hence $B^{a,m}\in\M^2(0,T)$ due to Lemma~\ref{lem:Lp-sum-Mp}.
        We have
        \begin{align*}
            \int_0^T \E\l[ \l|B^a_s -B^{a,m}_s\r|^2\r] \d s 
            &= 
            \sum_{k=0}^{m-1} \int_{t^m_k}^{t^m_{k+1}} \E\l[ \l|B^a_s -B^{a}_{t^m_k}\r|^2\r] \d s
            \\&=
            \sum_{k=0}^{m-1} \int_{t^m_k}^{t^m_{k+1}} \overline{\sigma}_{aa}^2 \l(s-t^m_k\r) \d s
            \\&\leq 
            \sum_{k=0}^{m-1} \overline{\sigma}_{aa}^2 \frac{T^2}{m^2}
            \\&= \frac{\overline{\sigma}_{aa}^2\,T^2}{m},
        \end{align*}
        which tends to $0$ when $m\rightarrow \infty$, i.e., $B^{a,m}\rightarrow B^a$ with respect to $\|\cdot\|_{\M^2}$.
        Since $\M^2(0,T)$ is complete, we deduce $B^a\in\M^2(0,T)$.
    \end{proof}

    \begin{proof}[Proof of Lemma~\ref{lem:I:M2-M2-map}]
        First, suppose $X\in\S(0,T)$. 
        Then there exist $m\in\Nbb$, $0=t_0<\ldots<t_m=T$, and $\xi_k\in\F{t_k}$, $0\leq k\leq N-1$ such that
        \begin{equation*}
            X=\sum_{k=0}^{m-1} \xi_k\ind_{\l[t_k,t_{k+1}\r)}.
        \end{equation*}
        We have
        \begin{align*}
            Z 
            &= 
            \int_0^\cdot X_u \d B^a_u 
            \\&=
            \sum_{k=0}^{m-1} \xi_k \l(B^a_{t_{k+1}\wedge \cdot} -B^a_{t_k\wedge \cdot}\r)
            \\&=
            \sum_{k=0}^{m-1} \xi_k \l(B^a_\cdot - B^a_{t_k}\r)\ind_{[t_k,t_{k+1})} + \sum_{k=0}^{m-1} \xi_k\l(B^a_{t_{k+1}} -B^a_{t_k}\r)\ind_{[t_{k+1},T)}
            \numberthis\label{eq:M2-M2-1}
        \end{align*}
        For the former sum on the right-hand side of \eqref{eq:M2-M2-1}, note that
        \begin{equation*}
            \xi_k \l(B^a_\cdot - B^a_{t_k}\r)\ind_{[t_k,t_{k+1})} = \xi_k\ind_{[t_k,t_{k+1})}\cdot \l(B^a_\cdot - B^a_{t_k}\ind_{[t_k,t_{k+1})}\r).
        \end{equation*}
        Clearly, $B^a_{t_k}\in\L^2(t_{k})$ and thus $B^a_{t_k}\ind_{[t_k,t_{k+1})}\in\M^2(0,T)$ due to Lemma~\ref{lem:Lp-sum-Mp}.
        By Lemma~\ref{lem:B-M2}, we have $B^a\in\M^2(0,T)$ and thus $(B^a-B^a_{t_k}\ind_{[t_k,t_{k+1})})\in\M^2(0,T)$.
        Since $\xi_k\ind_{[t_{k+1},T)}\in\S(0,T)$, we obtain $\xi_k \l(B^a_\cdot - B^a_{t_k}\r)\ind_{[t_k,t_{k+1})}\in\M^2(0,T)$ due to Corollary~\ref{cor:A3}.

        Similarly, for the latter sum on the right-hand side of \eqref{eq:M2-M2-1},
        \begin{equation*}
            \xi_k\l(B^a_{t_{k+1}} -B^a_{t_k}\r)\ind_{[t_{k+1},T)}
            =\xi_k\ind_{[t_{k+1},T)}\cdot \l(B^a_{t_{k+1}} -B^a_{t_k}\r)\ind_{[t_{k+1},T)}.
        \end{equation*}
        We have $B^a_{t_k},B^a_{t_{k+1}}\in\L^2(t_{k+1})$ and thus $(B^a_{t_{k+1}} -B^a_{t_k})\ind_{[t_{k+1},T)} \in\M^2(0,T)$ due to Lemma~\ref{lem:Lp-sum-Mp}. 
        Corollary~\ref{cor:A3} yields $\xi_k(B^a_{t_{k+1}} -B^a_{t_k})\ind_{[t_{k+1},T)}\in\M^2(0,T)$.

        Finally, we deduce that $Z\in\M^2(0,T)$ as finite sum of elements in $\M^2(0,T)$.

        Now, suppose $X\in\M^2(0,T)$.
        Then there exists a sequence $(X^m)_{m\in\Nbb}$ in $\S(0,T)$ with
        \begin{equation*}
            \lim_{m\rightarrow\infty}\l\|X-X^m\r\|_{\M^2} =0.
        \end{equation*}
        For each $m\in\Nbb$, define $Z^m$ by
        \begin{equation*}
            Z^m_t:=\int_0^t X^m_s \d B^a_s=\CI_a\l(X^m\ind_{[0,t)}\r) ,\qquad 0\leq t\leq T.
        \end{equation*}
        Then $Z^m\in\M^2(0,T)$.
        By Lemma~\ref{lem:I-properties}, we have
        \begin{equation*}
            \E\l[ \l| Z^m_t - Z_t \r|^2\r]
            \leq \overline{\sigma}_{aa}^2 \l\| \l(X^m - X\r)\ind_{[0,t)} \r\|_{\M^2}^2 
            \leq \overline{\sigma}_{aa}^2 \l\| X^m - X \r\|_{\M^2}^2,
        \end{equation*}
        and thus
        \begin{equation*}
            \lim_{m\rightarrow\infty} \int_0^T \E\l[ \l| Z^m_t - Z_t \r|^2\r] \d t 
            \leq \overline{\sigma}_{aa}^2\,T \l\| X^m - X \r\|_{\M^2}^2
            = 0.
        \end{equation*}
        Hence, $Z\in\M^2(0,T)$ since $\M^2(0,T)$ is complete.
    \end{proof}

    \begin{proof}[Proof of Lemma~\ref{lem:Q:M1-M1-map}]       
        First, suppose $X\in\S(0,T)$. 
        Then there exist $m\in\mathbb{N}$, $0=t_0<\ldots<t_m=T$, and $\xi_k\in\F{t_k}$, $0\leq k\leq m-1$ such that
        \begin{equation*}
            X=\sum_{k=0}^{N-1} \xi_k\ind_{\l[t_k,t_{k+1}\r)}.
        \end{equation*}
        Analogous to the proof of Lemma~\ref{lem:I:M2-M2-map}, we have
        \begin{align*}
            Z &=
            \sum_{k=0}^{m-1} \xi_k \l(\l<B^a\r>_\cdot - \l<B^a\r>_{t_k} \r)\ind_{[t_k,t_{k+1})} + \sum_{k=0}^{N-1} \xi_k\l(\l<B^a\r>_{t_{k+1}} - \l<B^a\r>_{t_k}\r)\ind_{[t_{k+1},T)}
            \numberthis\label{eq:M1-M1-1}
        \end{align*}
        For the former sum on the right-hand side of \eqref{eq:M1-M1-1}, note that
        \begin{equation*}
            \xi_k \l(\l<B^a\r>_\cdot - \l<B^a\r>_{t_k}\r)\ind_{[t_k,t_{k+1})} = \xi_k\ind_{[t_k,t_{k+1})}\cdot \l(\l<B^a\r>_\cdot - \l<B^a\r>_{t_k}\ind_{[t_k,t_{k+1})}\r).
        \end{equation*}
        Clearly, $\l<B^a\r>_{t_k}\in\L^1(t_{k})$. 
        Thus $\l<B^a\r>_{t_k}\ind_{[t_k,t_{k+1})}\in\M^1(0,T)$ due to Lemma~\ref{lem:Lp-sum-Mp}.
        By Lemma~\ref{lem:B-M2}, we have $\l<B^a\r>\in\M^1(0,T)$ and thus $(\l<B^a\r>-\l<B^a\r>_{t_k}\ind_{[t_k,t_{k+1})})\in\M^1(0,T)$.
        Since $\xi_k\ind_{[t_{k+1},T)}\in\S(0,T)$, we obtain $\xi_k \l(\l<B^a\r>_\cdot - \l<B^a\r>_{t_k}\r)\ind_{[t_k,t_{k+1})}\in\M^1(0,T)$ due to Corollary~\ref{cor:A3}.

        Similarly, for the latter sum on the right-hand side of \eqref{eq:M1-M1-1},
        \begin{equation*}
            \xi_k\l(\l<B^a\r>_{t_{k+1}} -\l<B^a\r>_{t_k}\r)\ind_{[t_{k+1},T)}
            =\xi_k\ind_{[t_{k+1},T)}\cdot \l(\l<B^a\r>_{t_{k+1}} -\l<B^a\r>_{t_k}\r)\ind_{[t_{k+1},T)}.
        \end{equation*}
        We have $\l<B^a\r>_{t_k},\l<B^a\r>_{t_{k+1}}\in\L^1(t_{k+1})$ and thus $(\l<B^a\r>_{t_{k+1}} -\l<B^a\r>_{t_k})\ind_{[t_{k+1},T)} \in\M^1(0,T)$ due to Lemma~\ref{lem:Lp-sum-Mp}. 
        Corollary~\ref{cor:A3} yields $\xi_k(\l<B^a\r>_{t_{k+1}} -\l<B^a\r>_{t_k})\ind_{[t_{k+1},T)}\in\M^1(0,T)$.

        Finally, we deduce that $Z\in\M^1(0,T)$ as finite sum of elements in $\M^1(0,T)$.

        Now, suppose $X\in\M^1(0,T)$.
        Then there exists a sequence $(X^m)_{m\in\Nbb}$ in $\S(0,T)$ with
        \begin{equation*}
            \lim_{m\rightarrow\infty}\l\|X-X^m\r\|_{\M^1} =0.
        \end{equation*}
        For each $m\in\Nbb$, define $Z^m$ by
        \begin{equation*}
            Z^m_t:=\int_0^t X^m_s \d \l<B^a\r>_s
            =\CQ_a\l(X^m\ind_{[0,t)}\r) 
            ,\qquad 0\leq t\leq T.
        \end{equation*}
        Then $Z^m\in\M^1(0,T)$.
        By Lemma~\ref{lem:Q-properties}, we have
        \begin{equation*}
            \E\l[ \l| Z^m_t - Z_t \r|\r] 
            \leq \overline{\sigma}_{aa}^2 \l\| \l(X^m - X\r)\ind_{[0,t)} \r\|_{\M^1} 
            \leq \overline{\sigma}_{aa}^2 \l\| X^m - X \r\|_{\M^1},
        \end{equation*}
        and thus
        \begin{equation*}
            \lim_{m\rightarrow\infty} \int_0^T \E\l[ \l| Z^m_t - Z_t \r|^1\r] \d t
            \leq \lim_{m\rightarrow\infty} \overline{\sigma}_{aa}^2\,T \l\| X^m - X \r\|_{\M^1}
            = 0.
        \end{equation*}
        Hence, $Z\in\M^1(0,T)$ since $\M^1(0,T)$ is complete.
    \end{proof}

    \begin{proof}[Proof of Lemma~\ref{lem:int:M1-M1-map}]       
        First, suppose $X\in\S(t,T)$. 
        Then there exist $m\in\mathbb{N}$, $t=t_0<\ldots<t_m=T$, and $\xi_k\in\F{t_k}$, $0\leq k\leq m-1$ such that
        \begin{equation*}
            X=\sum_{k=0}^{m-1} \xi_k\ind_{\l[t_k,t_{k+1}\r)}.
        \end{equation*}
        Analogous to the proof of Lemma~\ref{lem:I:M2-M2-map}, we have
        \begin{align*}
            Z &=
            \sum_{k=0}^{m-1} \xi_k \l(\textnormal{id}_{[0,T]} - {t_k}\r)\ind_{[t_k,t_{k+1})} + \sum_{k=0}^{m-1} \xi_k\l({t_{k+1}} - {t_k}\r)\ind_{[t_{k+1},T)}.
            \numberthis\label{eq:A6-1}
        \end{align*}
        For the former sum on the right-hand side of \eqref{eq:A6-1}, note that
        \begin{equation*}
            \xi_k \l(\textnormal{id}_{[t,T]} - {t_k}\r)\ind_{[t_k,t_{k+1})} = \xi_k\ind_{[t_k,t_{k+1})}\cdot \l(\textnormal{id}_{[t,T]} - {t_k}\ind_{[t_k,t_{k+1})}\r).
        \end{equation*}
        Clearly, ${t_k}\ind_{[t_k,t_{k+1})}\in\S(0,T)$ since $t_k$ is constant.
        By Lemma~\ref{lem:t-Mp}, we have $\textnormal{id}_{[t,T]}\in\M^p(t,T)$ and thus $(\textnormal{id}_{[t,T]}-{t_k}\ind_{[t_k,t_{k+1})})\in\M^p(t,T)$.
        Since $\xi_k\ind_{[t_{k+1},T)}\in\S(t,T)$, we obtain $\xi_k \l(\textnormal{id}_{[t,T]}-{t_k}\ind_{[t_k,t_{k+1})}\r)\ind_{[t_k,t_{k+1})}\in\M^p(t,T)$ due to Corollary~\ref{cor:A3}.

        Similarly, for the latter sum on the right-hand side of \eqref{eq:M1-M1-1},
        \begin{equation*}
            \xi_k\l(\l<B^a\r>_{t_{k+1}} -\l<B^a\r>_{t_k}\r)\ind_{[t_{k+1},T)}
            =\xi_k\ind_{[t_{k+1},T)}\cdot \l(\l<B^a\r>_{t_{k+1}} -\l<B^a\r>_{t_k}\r)\ind_{[t_{k+1},T)}.
        \end{equation*}
        We have $\l<B^a\r>_{t_k},\l<B^a\r>_{t_{k+1}}\in\L^1(t_{k+1})$ and thus $(\l<B^a\r>_{t_{k+1}} -\l<B^a\r>_{t_k})\ind_{[t_{k+1},T)} \in\M^1(0,T)$ due to Lemma~\ref{lem:Lp-sum-Mp}. 
        Corollary~\ref{cor:A3} yields $\xi_k(\l<B^a\r>_{t_{k+1}} -\l<B^a\r>_{t_k})\ind_{[t_{k+1},T)}\in\M^1(0,T)$.

        Finally, we deduce that $Z\in\M^1(0,T)$ as finite sum of elements in $\M^1(0,T)$.

        Now, suppose $X\in\M^1(0,T)$.
        Then there exists a sequence $(X^m)_{m\in\Nbb}$ in $\S(0,T)$ with
        \begin{equation*}
            \lim_{m\rightarrow\infty}\l\|X-X^m\r\|_{\M^1} =0.
        \end{equation*}
        For each $m\in\Nbb$, define $Z^m$ by
        \begin{equation*}
            Z^m_t:=\int_0^t X^m_s \d \l<B^a\r>_s
            =\CQ_a\l(X^m\ind_{[0,t)}\r) 
            ,\qquad 0\leq t\leq T.
        \end{equation*}
        Then $Z^m\in\M^1(0,T)$.
        By Lemma~\ref{lem:Q-properties}, we have
        \begin{equation*}
            \E\l[ \l| Z^m_t - Z_t \r|\r] 
            \leq \overline{\sigma}_{aa}^2 \l\| \l(X^m - X\r)\ind_{[0,t)} \r\|_{\M^1} 
            \leq \overline{\sigma}_{aa}^2 \l\| X^m - X \r\|_{\M^1},
        \end{equation*}
        and thus
        \begin{equation*}
            \lim_{m\rightarrow\infty} \int_0^T \E\l[ \l| Z^m_t - Z_t \r|^1\r] \d t
            \leq \lim_{m\rightarrow\infty} \overline{\sigma}_{aa}^2\,T \l\| X^m - X \r\|_{\M^1}
            = 0.
        \end{equation*}
        Hence, $Z\in\M^1(0,T)$ since $\M^1(0,T)$ is complete.
    \end{proof}

    \begin{proof}[Proof of Lemma~\ref{lem:Q-p-bound}]
        Let $P\in\CP$, then $B$ is a continuous martingale under $P$. 
        By the martingale representation theorem, there exists $P$-Brownian motion $W^P$ and a progressively measurable $\sigma^P$ such that $B=\sigma^P \bullet W^P$ under $P$.
        Moreover, we can choose $\sigma^P$ to be $\Sigma$-valued due to the construction of $\CP$.
        
        By the definition of the quadratic covariation, we have for all $0\leq t\leq T$ under $P$ that
        \begin{align*}
            \l<B^a,B^b\r>_t
            &= 
            \frac{1}{4}\l( \l<B^{a+b}\r>_t - \l<B^{a-b}\r>_t \r)
            \\&=
            \frac{1}{4}\l( \l<\l(a+b\r)^TB\r>_t - \l<\l(a-b\r)^TB\r>_t \r)
            \\&= 
            \frac{1}{4}\l( \l<\l(a+b\r)^T\sigma^P\bullet W^P\r>_t - \l<\l(a-b\r)^T\sigma^P\bullet W^P\r>_t \r)
            \\&=
            \frac{1}{4}\l( \int_0^t \l(a+b\r)^T\sigma^P_s \d \l<W^P\r>_s\sigma^P_s\l(a+b\r) - \int_0^t \l(a-b\r)^T\sigma^P_s \d \l<W^P\r>_s\sigma^P_s\l(a-b\r) \r)
            \\&=
             \frac{1}{4} \int_0^t \l(a+b\r)^T\sigma^P_s \sigma^P_s\l(a+b\r) - \l(a-b\r)^T\sigma^P_s\sigma^P_s\l(a-b\r) \d s
             \\&=
             \int_0^t a^T\sigma^P_s\sigma^P_s b \d s.
        \end{align*}
        Using the representation of $\E$ as upper expectation, we obtain
        \begin{align*}
            \E\l[ \sup_{t\leq w\leq s}\l|\int_t^w X_u \d \l<B^a,B^b\r>_u\r|^p \r]
            &=
            \sup_{P\in\CP} E_P\l[ \sup_{t\leq w\leq s}\l|\int_t^w X_u a^T \sigma^P_u \sigma^P_u b \d u\r|^p \r]
            \\&\leq 
            \l(s-t\r)^{p-1} \sup_{P\in\CP} E_P\l[ \int_t^s \l| X_u a^T \sigma^P_u \sigma^P_u b \r|^p \d u\r]
            \\&\leq 
            \overline{\sigma}_{ab}^{2p} \l(s-t\r)^{p-1} \sup_{P\in\CP} E_P\l[ \int_t^s \l| X_u \r|^p \d u\r]
            \\&= 
            \overline{\sigma}_{ab}^{2p} \l(s-t\r)^{p-1} \E\l[ \int_t^s  \l| X_u \r|^p \d u \r] 
            \\&\leq 
            \overline{\sigma}_{ab}^{2p} \l(s-t\r)^{p-1} \int_t^s \E\l[  \l| X_u \r|^p \r] \d u.
        \end{align*}
    \end{proof}

    \begin{proof}[Proof of Lemma~\ref{lem:I-p-bound}]
        Let $P\in\CP$, then $X\bullet B$ is a continuous local martingale. 
        By the Burkholder-Davis-Gundy inequality, there exists a constant $C_p>0$ such that
        \begin{align*}
            \E\l[ \sup_{t\leq w\leq s} \l|\int_t^w X_u \d B^a_u\r|^p \r] 
            &= \sup_{P\in\CP} E_P\l[ \sup_{t\leq w\leq s} \l|\int_t^w X_u \d B^a_u\r|^p \r] 
            \\&\leq 
            C_p \sup_{P\in\CP} E_P\l[ \sup_{t\leq w\leq s} \l| \int_t^w X_u^2 \d\l< B^a\r>_u\r|^{\frac{p}{2}} \r]
            \\&=
            C_p \E\l[ \sup_{t\leq w\leq s} \l| \int_t^w X_u^2 \d\l< B^a\r>_u\r|^{\frac{p}{2}} \r]
            \\&\leq 
            C_p \,\overline{\sigma}^{p}_{aa} \l(s-t\r)^{\frac{p-2}{2}} \int_t^s \E\l[ \l|X_u \r|^p \r] \d u,
        \end{align*}
        where the last step follows from Lemma~\ref{lem:Q-p-bound} since $X^2\in\M^{q}(0,T)$ with $q=\frac{p}{2}\geq 1$.
     \end{proof}
    
    \printbibliography
\end{document}